\numberwithin{equation}{section}
\newtheorem{assumption}{Assumption}
\newcommand\be{\begin{equation}}
\newcommand\ee{\end{equation}}
\renewcommand{\subset}{\subseteq}
\renewcommand{\d}{\,\text{\rmfamily{}\upshape{}d}}
\newcommand{\dx}{\,\text{\rmfamily{}\upshape{}d}x}
\newcommand{\dt}{\,\text{\rmfamily{}\upshape{}d}t}
\newcommand{\ds}{\,\text{\rmfamily{}\upshape{}d}s}
\def\N{\mathbb  N}
\def\R{\mathbb  R}
\def\L{\mathcal  L}
\def\argmin{\operatorname*{arg\, min}}
\def\prox{\operatorname{prox}}
\def\dom{\operatorname{dom}}
\def\env{\operatorname{env}}
\def\dtol{\delta_{\mathrm{tol}}}
\def\dtolcg{\delta_{\mathrm{CG}}}
\newcolumntype{L}{>{$}l<{$\quad}}
\newcolumntype{R}{>{$}r<{$\quad}}
\newcolumntype{C}{>{$}c<{$}}
\newcommand{\Frechet}{Fr\'echet\xspace}
\begin{document}

\title{A globalized inexact semismooth Newton method for strongly convex optimal control problems}
\shorttitle{A globalized inexact semismooth Newton method}

\author{Daniel Wachsmuth%
\thanks{Institut f\"ur Mathematik,
Universit\"at W\"urzburg,
97074 W\"urzburg, Germany,
		\email{daniel.wachsmuth@uni-wuerzburg.de},
		\orcid{0000-0001-7828-5614}}}

\shortauthor{Daniel Wachsmuth}

\acknowledgements{
This research was partially funded by the Deutsche Forschungsgemeinschaft (DFG, German Research Foundation) – project number 565876882. % pmp
}

\maketitle

\begin{abstract}
We investigate a globalized inexact semismooth Newton method applied to strongly convex optimization problems in Hilbert spaces.
Here, the semismooth Newton method is appplied to the dual problem, which has a continuously differentiable objective.
We prove global strong convergence of iterates as well as transition to local superlinear convergence.
The latter needs a second-order Taylor expansion involving semismooth derivative concepts.
The convergence of the globalized method is demonstrated in numerical examples, for which
the local unglobalized method diverges.
\end{abstract}

\bigskip

{\bfseries Keywords. } Infinite dimensional optimization, semismooth Newton method, globalization, fast local convergence.

\bigskip

{\bfseries MSC (2020) classification. }
49K20, % Optimality conditions for problems involving partial differential equations
35J15  % Second-order elliptic equations
\section{Introduction}

The goal of this article is to develop a globalized semismooth Newton method to solve the following
strongly convex optimization problem:
\begin{equation}\label{eq001}
 \min_{u \in U } \frac12 \|Su-z\|_Y^2  + \frac\alpha2 \|u\|_U^2 + g(u)
\end{equation}
where $U,Y$ are Hilbert spaces, $S \in \L(U,Y)$, $z\in Y$, $\alpha>0$, $g:U \to \bar\R$ is proper, convex, lower semicontinuous with $0 \in \dom g$.
%  , $\bar\R:= \R \cup \{+\infty\}$
%  \item
%  \item $\alpha>0$

Problem \eqref{eq001} is a nice convex problem and its properties are well understood.
However, the local (unglobalized) semismooth Newton method converges globally only for relatively large values of $\alpha$.

Following \cite{HinzeVierling2012oms,ZhaoSunToh2010,MilzarekSchaippUlbrich2024},
we propose to apply the semismooth Newton method to the dual problem of \eqref{eq001},
where the globalization uses the dual objective as merit function.
Due to the structure of \eqref{eq001}, the dual problem has a continuously differentiable objective functional,
which enables a particularly nice convergence analysis.
As this analysis crucially relies on the strong convexity of the primal problem, it cannot be extended to non-convex or not strongly convex problems.
Hence, the proposed method is intended to be used as a subproblem solver in, e.g., proximal point methods \cite{MilzarekSchaippUlbrich2024}
or iterated Bregman methods \cite{PornerWachsmuth2016}.

Interestingly, in the implementation only knowledge about the Moreau-Yosida envelope and the proximal operator of $g$ is needed.
Using the proximal operator, one can prove that $u \in U$ solves \eqref{eq001}
if and only if  $u \in U$ solves the non-smooth equation
\begin{equation}\label{eq002}
 u = \prox_{g/\alpha} \left( -\frac1\alpha S^*(Su-z) \right),
\end{equation}
where $S^*$ denotes the Hilbert space adjoint of $S$.
In order to solve \eqref{eq001} numerically, the most common approach is to apply the semismooth Newton method to the non-smooth equation \eqref{eq002}.
In this paper, we propose to solve the equation
\begin{equation}\label{eq003}
\xi - z + S  \prox_{g/\alpha} ( S^* ( \xi/\alpha) ) = 0,
\end{equation}
which is related to \eqref{eq002} via $\xi = z - Su$. The advantage of this approach is that the derivative of the operator appearing in \eqref{eq003}
is a self-adjoint map, which is not the case for \eqref{eq002}.
In addition, \eqref{eq003} is equivalent to $\nabla \Phi(\xi)=0$, where $\Phi$ is the dual function to \eqref{eq001}, which will be used as merit function.

A third approach to solve \eqref{eq001} is based on the so-called normal map \cite{Robinson1992}, where one solves
\begin{equation}\label{eq004}
p - S^*(S \prox_{g/\alpha}( - p/\alpha) - z) = 0
\end{equation}
for $p\in U$. Clearly, $p$ is related to $\xi$ by $p = - S^*\xi$.
Again the derivative of the operator appearing in \eqref{eq004} is not self-adjoint.

This artice is written 25 years after the publication of the seminal contributions \cite{Ulbrich2002}  (submitted 2000) and \cite{HintermullerItoKunisch2002} (submitted 2001)
on semismooth Newton methods applied to optimal control problems.
Still the literature on globalized semismooth Newton methods for infinite-dimensional problems is rather scarce.
In fact, to the best of our knowledge, only the following contributions develop a global convergent method applicable to \eqref{eq001}.
In \cite{HinzeVierling2012oms} the globalization based on \eqref{eq003} was investigated for the special case, where $g$ is the indicator function associated with box constraints.
In the monograph \cite{Ulbrich2011} a trust-region globalization of the semi-smooth Newton method applied to the non-smooth equation \eqref{eq002}
is analyzed. Since continuous differentiability of $g$ is required, it is not applicable to choices like $g(u) = \|u\|_{L^1(\Omega)}$.
A similar method was analyzed for the special case of a parabolic control problem in \cite{KelleySachs1999}.
In \cite{GerdtsHornKimmerle2017} a globalized method is introduced, where the convergence analysis relies on strong assumptions on the sequence of chosen derivatives.
For $\ell^1$-regularized problems, a globalized semismooth Newton method can be found in \cite{HansRaasch2015}. % Compact level sets wtf
One major drawback of the convergence analysis in \cite{Ulbrich2011,GerdtsHornKimmerle2017,HansRaasch2015} is that they assume the existence of strong accumulation points of the sequence of iterates.
While it is natural to expect that the sequence of iterates is bounded, this only guarantees the existence of {\em weakly} converging subsequences.
However, {\em strong} convergence of iterates has to be verified in order to prove transition to fast local convergence.
% In order to be able to prove the transition to fast local convergence, the existence of a strongly converging subsequence of iterates has to be established first.

Let us comment on related work on globalized semismooth Newton methods.
The recent work \cite{alphonse2024globalizedinexactsemismoothnewton} introduces a semismooth Newton method to solve contractive fixed point problems.
Due to the contraction assumption, no damping or globalization is needed.
In \cite{PotzlSchielaJaap2022,PotzlSchielaJaap2024} globalized proximal Newton methods that solve minimization problems in Hilbert spaces are investigated.
The use of the normal map equation was proposed in \cite{HinzeVierling_2012spp} together with a damped Newton method without convergence analysis.
Global convergence of the unregularized semismooth Newton method applied to problems with box constraints was proven in
\cite{BergouniouxItoKunisch1999, HintermullerItoKunisch2002} for discretized problems and \cite{KunischRosch2002} in the infinite-dimensional setting provided $\alpha$ is large relative to $\|S\|$.
For a review of early results on (globalized) semismooth Newton methods we refer to \cite{QiSun1999}.
Recent contributions
on globalized semismooth Newton methods to minimize the sum of a differentiable and a non-smooth function in finite dimensions can be found in
\cite{OuyangMilzarek2024,Gfrerer2025}.

The goal of this article is to give an almost self-contained analysis of the global convergence of the globalized semismooth Newton method.
A few facts about subdifferentials and proximal operators are collected in \cref{sec_proximals}.
The dual problem to \eqref{eq001} is introduced in \cref{sec_dual}.
Due to the problem structure, no duals have to be computed for the implementation, only the proximal operator of $g$ is needed.
For the transition to fast convergence, a second order Taylor expansion is essential, which is introduced in \cref{sec_2ndorder}.
As in  \cite{ZhaoSunToh2010,MilzarekSchaippUlbrich2024} we use a conjugate gradient method as solver for the Newton equations.
In order to prove that all iterates of the CG method are descent directions, one elementary result is needed, which is provided in \cref{sec_cg}.
The main section is \cref{sec_ssn}, which contains the globalized semismooth Newton method \cref{alg_ssn}.
We prove {\em strong} convergence of the iterates in \cref{lem_weak_convergence}, and the transition to fast local convergence in \cref{thm_superlinear_convergence}.
\cref{sec_nemyzki} shows that commonly studied examples admit a second order Taylor expansion.
As we show in \cref{sec_vd}, the application of the semismooth Newton method to \eqref{eq003} enables the use of the variational discretization \cite{Hinze2005}
similar to the arguments in \cite{HinzeVierling_2012spp}.
The paper concludes with numerical experiments in \cref{sec_numerics}. The results indicate that the variational discretization leads to better performance of
the semismooth Newton method for small values of $\alpha$ in some examples.

% \clearpage

\section{Preliminaries on the primal and dual problems}

\begin{lemma}
 The problem \eqref{eq001} is uniquely solvable.
 An element $\bar u \in U$ is a solution of \eqref{eq001} if and only if
 \begin{equation} \label{eq_primal_noc}
  0 \in S^*(S\bar u - z ) + \alpha \bar u + \partial g(\bar u).
  \end{equation}
\end{lemma}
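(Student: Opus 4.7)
The plan is to split the claim into three pieces: existence, uniqueness, and the first-order optimality condition, treating the first two by the direct method and strong convexity, and the third by the Moreau--Rockafellar sum rule.

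First I would verify that the objective
\[
J(u) := \tfrac12\|Su-z\|_Y^2 + \tfrac\alpha2\|u\|_U^2 + g(u)
\]
is proper, convex, and lower semicontinuous. Properness uses $0\in\dom g$ so that $J(0) = \tfrac12\|z\|_Y^2 + g(0) < \infty$. Convexity and lower semicontinuity follow because the first two terms are continuous and convex, and $g$ is convex lsc. Crucially, the Tikhonov term $\tfrac{\alpha}{2}\|u\|_U^2$ makes $J$ strongly convex with modulus $\alpha>0$. Since $g$ is convex proper lsc it admits a continuous affine minorant, so strong convexity implies coercivity: $J(u)\to\infty$ as $\|u\|_U\to\infty$.

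To obtain existence I would apply the direct method: pick a minimizing sequence $(u_n)\subset U$; coercivity yields boundedness of $(u_n)$, hence a weakly convergent subsequence $u_{n_k}\rightharpoonup\bar u$ in the Hilbert space $U$; convex lsc implies weak lsc, so $J(\bar u)\le\liminf_k J(u_{n_k}) = \inf J$, and $\bar u$ is a minimizer. Uniqueness is immediate from strong convexity, since two distinct minimizers would produce a strictly smaller value at their midpoint.

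For the optimality condition I would decompose $J = F + g$ with $F(u) := \tfrac12\|Su-z\|_Y^2 + \tfrac\alpha2\|u\|_U^2$. A direct computation shows $F$ is Fr\'echet differentiable on all of $U$ with
\[
\nabla F(u) = S^*(Su-z) + \alpha u.
\]
Since $F$ is convex and finite (in particular continuous) everywhere on $U$, the Moreau--Rockafellar sum rule applies without any constraint qualification and gives $\partial J(u) = \nabla F(u) + \partial g(u)$ for every $u\in U$. By Fermat's rule, $\bar u$ minimizes the convex function $J$ if and only if $0\in\partial J(\bar u)$, which is exactly \eqref{eq_primal_noc}. There is no real obstacle here; the only point requiring a touch of care is invoking the sum rule, which is unproblematic because $\dom F = U$ and $F$ is continuous.
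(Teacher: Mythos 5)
Your proposal is correct and follows essentially the same route as the paper: existence and uniqueness by the direct method together with strong convexity (the paper simply cites a standard reference for this), and the optimality condition via Fermat's rule and the Moreau--Rockafellar sum rule, which applies because the quadratic part is continuous on all of $U$. The only cosmetic difference is that you group the two smooth addends into one function $F$ and invoke the sum rule once, whereas the paper applies it twice, term by term.
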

\begin{proof}
Existence can be proven by classical arguments,
see, e.g., \cite[Theorem 11.10]{BauschkeCombettes2017}.
Let $\bar u$ be a solution of \eqref{eq001}.
Since the first two addends of \eqref{eq001} are continuous, we can apply the subgradient sum-rule twice to obtain \eqref{eq_primal_noc}.
The sufficiency of \eqref{eq_primal_noc} follows from convexity.
\end{proof}

\subsection{Conjugate functions and proximal operators}
\label{sec_proximals}

\begin{definition}
 Let $X$ be a Hilbert space, $h : X \to \bar\R$ proper, convex, lower semicontinuous.
 The convex conjugate $h^*$ of $h$ is defined by
 \[
  h^*(\zeta) := \sup_{x\in X} \langle \zeta,x\rangle - h(x).
 \]
 The Moreau-Yosida envelope of $h$ is defined by
 \[
  \env_h(\zeta) := \min_{x\in X} \frac12\|x-\zeta\|_X^2 + h(x),
 \]
 and the proximal operator associated to $h$ is defined by
 \[
  \prox_h (\zeta) := \argmin_{x\in X} \frac12\|x-\zeta\|_X^2 + h(x).
 \]
\end{definition}

For properties of Moreau-Yosida envelopes and  proximal operators, we refer to \cite[Section 12.4]{BauschkeCombettes2017}.
Convex conjugates are developed in \cite[Chapter 13]{BauschkeCombettes2017}.

\begin{proposition}[{\cite[Proposition 12.26]{BauschkeCombettes2017}}]
\label{prop_prox_characterization}
 Let $X$ be a Hilbert space, $h : X \to \bar\R$ proper, convex, lower semicontinuous.
 Let $x,\zeta \in X$. Then $x = \prox_h (\zeta)$ if and only if $0 \in x-\zeta + \partial h(x)$.
\end{proposition}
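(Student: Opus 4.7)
The plan is to recognize the characterization as the Fermat rule applied to the strongly convex minimization problem that defines the proximal operator, and then invoke the subdifferential sum rule to split off the smooth quadratic term.

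First I would set $F(x) := \frac12 \|x-\zeta\|_X^2 + h(x)$, so by definition $\prox_h(\zeta)$ is the (unique) minimizer of $F$. The first summand $q(x) := \frac12\|x-\zeta\|_X^2$ is continuous, convex and \Frechet{} differentiable on $X$ with $\nabla q(x) = x - \zeta$, while $h$ is proper, convex and lower semicontinuous. Hence $F$ is proper, convex and lower semicontinuous, and Fermat's rule for convex functions gives: $x$ minimizes $F$ if and only if $0 \in \partial F(x)$.

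Next I would apply the subdifferential sum rule. Since $q$ is finite and continuous on all of $X$, the qualification condition for the convex sum rule is satisfied (e.g.\ because $\dom q = X$ intersects the interior of $\dom h$ trivially in the sense required). Therefore $\partial F(x) = \partial q(x) + \partial h(x) = \{x-\zeta\} + \partial h(x)$. Combining this with Fermat's rule gives $x = \prox_h(\zeta)$ if and only if $0 \in x - \zeta + \partial h(x)$, which is the claimed equivalence.

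The main thing to be careful about is that the characterization is stated as an equivalence for \emph{any} $x$ satisfying $0 \in x - \zeta + \partial h(x)$, so one direction uses that such an $x$ is necessarily the unique minimizer: this follows because $F$ is strictly (in fact $1$-strongly) convex, so the inclusion $0 \in \partial F(x)$ is not only necessary but also sufficient for $x$ to be the global minimizer, and the minimizer is unique. No real obstacle arises; the only non-trivial ingredient is the convex sum rule, which is applicable here thanks to the continuity of the quadratic term.
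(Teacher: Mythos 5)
Your argument is correct. Note, however, that the paper itself gives no proof of this proposition; it is quoted verbatim from \cite[Proposition 12.26]{BauschkeCombettes2017}, so there is no in-paper proof to compare against. Your route (Fermat's rule plus the Moreau--Rockafellar sum rule, splitting off the smooth quadratic $q(x)=\frac12\|x-\zeta\|_X^2$ with $\nabla q(x)=x-\zeta$) is a perfectly standard and valid derivation; the qualification condition for the sum rule holds simply because $q$ is finite and continuous on all of $X$ and $\dom h\neq\emptyset$ (your phrasing about ``the interior of $\dom h$'' is not quite the right condition, since $\dom h$ may have empty interior, but continuity of $q$ at a point of $\dom h$ is what is needed and is what you actually have). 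The textbook proof is even more elementary: it unwinds the definition of the subdifferential directly, showing that $x$ minimizes $F$ iff $\langle \zeta-x, y-x\rangle + h(x)\le h(y)$ for all $y$, i.e.\ $\zeta-x\in\partial h(x)$, thereby avoiding the sum rule altogether. Both approaches are fine; also, for the direction $0\in\partial F(x)\Rightarrow x$ minimizes $F$ you only need convexity of $F$ (it is immediate from the definition of the subdifferential), while the $1$-strong convexity is what guarantees existence and uniqueness of the minimizer, i.e.\ that $\prox_h$ is single-valued.
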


\begin{proposition}[{\cite[Proposition 12.28]{BauschkeCombettes2017}}]
\label{prop_prox_lipschitz}
 Let $X$ be a Hilbert space, $h : X \to \bar\R$ proper, convex, lower semicontinuous.
 Let $\zeta_1,\zeta_2 \in X$. Then $\|\prox_h(\zeta_1)-\prox_h(\zeta_2)\|_X \le \|\zeta_1-\zeta_2\|_X$.
\end{proposition}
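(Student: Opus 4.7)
My plan is to prove the non-expansiveness of $\prox_h$ by reducing it to the classical monotonicity property of the convex subdifferential, which in turn follows from the defining subgradient inequality of $\partial h$. Setting $x_i := \prox_h(\zeta_i)$ for $i=1,2$, the first step is to apply \cref{prop_prox_characterization} to rewrite the optimality condition $0 \in x_i - \zeta_i + \partial h(x_i)$ as
\[
\zeta_i - x_i \in \partial h(x_i), \qquad i=1,2.
\]

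Next, I would invoke monotonicity of $\partial h$. Since $h$ is convex, adding the subgradient inequalities
\[
h(x_2) \ge h(x_1) + \langle \zeta_1-x_1, x_2-x_1\rangle_X, \qquad h(x_1) \ge h(x_2) + \langle \zeta_2-x_2, x_1-x_2\rangle_X,
\]
and cancelling yields
\[
\langle (\zeta_1 - x_1) - (\zeta_2 - x_2),\, x_1 - x_2\rangle_X \ge 0,
\]
which rearranges to
\[
\|x_1 - x_2\|_X^2 \le \langle \zeta_1 - \zeta_2,\, x_1 - x_2\rangle_X.
\]

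The last step is purely mechanical: apply the Cauchy-Schwarz inequality to the right-hand side to obtain $\|x_1-x_2\|_X^2 \le \|\zeta_1-\zeta_2\|_X\,\|x_1-x_2\|_X$, and divide by $\|x_1-x_2\|_X$ (the conclusion is trivial if $x_1 = x_2$). There is no real obstacle here; the only conceptual point worth emphasizing is that firm nonexpansiveness (which is what the strengthened inequality $\|x_1-x_2\|_X^2 \le \langle \zeta_1 - \zeta_2, x_1 - x_2\rangle_X$ actually expresses) comes for free from the subdifferential characterization, and ordinary $1$-Lipschitz continuity is then an immediate corollary.
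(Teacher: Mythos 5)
Your proof is correct: the paper itself gives no proof and simply cites \cite[Proposition 12.28]{BauschkeCombettes2017}, and your argument (the subdifferential characterization from \cref{prop_prox_characterization}, monotonicity of $\partial h$, then Cauchy--Schwarz) is exactly the standard derivation of firm nonexpansiveness used in that reference. Nothing is missing; the handling of the trivial case $x_1=x_2$ is appropriately noted.
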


\begin{proposition}[{\cite[Proposition 12.30]{BauschkeCombettes2017}}]
\label{prop_diff_env}
 Let $X$ be a Hilbert space, $h : X \to \bar\R$ proper, convex, lower semicontinuous.
 Then $\env_h : X \to \R$ is \Frechet differentiable with
 \[
  \nabla \env_h(\zeta) = \zeta - \prox_h(\zeta).
 \]
\end{proposition}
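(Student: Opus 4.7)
The plan is to verify the Fréchet expansion of $\env_h$ directly from its definition. Write $p(\zeta) := \prox_h(\zeta)$; existence of $p(\zeta)$ and finiteness of $\env_h(\zeta)$ both follow from the fact that the objective in the defining minimization is strongly convex, proper, lsc and coercive, so its infimum is attained and real (cf.\ the material cited in \cite[Sec.~12.4]{BauschkeCombettes2017}). I will sandwich the increment $\env_h(\zeta + w) - \env_h(\zeta)$ between two matching quadratic estimates and then use \cref{prop_prox_lipschitz} to close the gap.

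For fixed $\zeta, w \in X$, using $p(\zeta)$ as a feasible (generally sub-optimal) point in the minimization defining $\env_h(\zeta + w)$, combined with the exact identity $\env_h(\zeta) = \frac12\|p(\zeta) - \zeta\|_X^2 + h(p(\zeta))$, yields, after expanding the squares,
\[
\env_h(\zeta + w) - \env_h(\zeta) \le \langle \zeta - p(\zeta), w\rangle + \tfrac12\|w\|_X^2.
\]
The symmetric argument, with the roles of $\zeta$ and $\zeta + w$ swapped and $p(\zeta + w)$ used as a feasible point in the definition of $\env_h(\zeta)$, gives the matching lower bound
\[
\env_h(\zeta + w) - \env_h(\zeta) \ge \langle (\zeta + w) - p(\zeta + w), w\rangle - \tfrac12\|w\|_X^2.
\]

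Combining the two estimates, the quantity $\env_h(\zeta + w) - \env_h(\zeta) - \langle \zeta - p(\zeta), w\rangle$ lies inside a band of half-width $\tfrac12\|w\|_X^2$ plus the correction term $\langle (\zeta + w) - p(\zeta + w) - (\zeta - p(\zeta)), w\rangle$. By \cref{prop_prox_lipschitz} the map $\prox_h$ is $1$-Lipschitz, so $\zeta \mapsto \zeta - p(\zeta)$ is $2$-Lipschitz and Cauchy–Schwarz bounds the correction by $2\|w\|_X^2$. Therefore
\[
\env_h(\zeta + w) - \env_h(\zeta) - \langle \zeta - p(\zeta), w\rangle = O(\|w\|_X^2),
\]
which is Fréchet differentiability at $\zeta$ with gradient $\zeta - p(\zeta)$. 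I expect no deep obstacle: the only care required is the bookkeeping that makes the linear parts of the two sandwich inequalities coincide with the candidate gradient, after which Lipschitz continuity of $\prox_h$ automatically upgrades the remainder from the $o(\|w\|_X)$ needed for Gâteaux differentiability to the $O(\|w\|_X^2)$ required for Fréchet differentiability.
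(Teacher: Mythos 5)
Your proof is correct: the two feasibility estimates are computed accurately, the linear parts match the candidate gradient, and the $1$-Lipschitz continuity of $\prox_h$ from \cref{prop_prox_lipschitz} does upgrade the remainder to $O(\|w\|_X^2)$, which suffices for \Frechet differentiability. The paper itself gives no proof here --- it quotes the result directly from \cite[Proposition 12.30]{BauschkeCombettes2017} --- and your sandwich argument is essentially the standard textbook proof of that proposition, so there is nothing further to compare.
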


\begin{lemma}\label{lem_diff_conjugate}
Let $X$ be a Hilbert space, $h : X \to \bar\R$ proper, convex, lower semicontinuous.
Define
\[
 H^*:= \left( \frac12\|\cdot\|_X^2 + h \right)^*.
\]
Then $H^*:X \to \R$ satisfies
\[
 H^*(\zeta) =    \frac12\|\zeta\|_X^2-\env_h(\zeta)
\]
and is \Frechet differentiable with
\[
 \nabla H^*(\zeta) = \prox_h(\zeta) .
\]
If $0 \in \dom h$ then $H^*$ is bounded from below.
\end{lemma}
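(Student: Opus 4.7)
The plan is to prove each of the three claims in turn by direct computation, leveraging the identity $\langle \zeta,x\rangle - \tfrac12\|x\|_X^2 = \tfrac12\|\zeta\|_X^2 - \tfrac12\|x-\zeta\|_X^2$ to connect the conjugate with the Moreau--Yosida envelope, and then using \cref{prop_diff_env} to transfer differentiability.

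First, I would start from the definition
\[
H^*(\zeta) = \sup_{x \in X} \left( \langle \zeta, x\rangle - \tfrac12\|x\|_X^2 - h(x)\right)
\]
and complete the square inside the supremum to rewrite it as
\[
H^*(\zeta) = \tfrac12\|\zeta\|_X^2 + \sup_{x \in X} \left( -\tfrac12\|x-\zeta\|_X^2 - h(x) \right) = \tfrac12\|\zeta\|_X^2 - \inf_{x \in X} \left( \tfrac12\|x-\zeta\|_X^2 + h(x) \right).
\]
By the definition of the Moreau--Yosida envelope this is precisely $\tfrac12\|\zeta\|_X^2 - \env_h(\zeta)$, giving the first identity. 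Since $h$ is proper, the infimum above is finite (and the envelope is real-valued on all of $X$ thanks to the coercive quadratic term), so $H^*$ is indeed real-valued.

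Next, differentiability follows immediately by combining the identity with \cref{prop_diff_env}: the map $\zeta \mapsto \tfrac12\|\zeta\|_X^2$ is \Frechet differentiable with derivative $\zeta$, and $\env_h$ is \Frechet differentiable with $\nabla \env_h(\zeta) = \zeta - \prox_h(\zeta)$, so by the sum rule
\[
\nabla H^*(\zeta) = \zeta - (\zeta - \prox_h(\zeta)) = \prox_h(\zeta).
\]

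Finally, for the lower bound, I would exploit the assumption $0 \in \dom h$ by estimating the envelope at a fixed admissible point: plugging $x=0$ into the defining minimization yields
\[
\env_h(\zeta) \le \tfrac12\|\zeta\|_X^2 + h(0),
\]
and substituting into the formula for $H^*$ shows $H^*(\zeta) \ge -h(0)$ for all $\zeta$. None of the steps presents a real obstacle; the only place where one has to be slightly careful is verifying that all quantities (the supremum defining $H^*$, the envelope, the value $h(0)$) are finite, which is ensured by the properness of $h$ together with $0\in\dom h$.
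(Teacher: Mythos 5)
Your proposal is correct and follows essentially the same route as the paper: completing the square to identify $H^*$ with $\tfrac12\|\cdot\|_X^2-\env_h$, invoking \cref{prop_diff_env} for differentiability, and obtaining the lower bound $H^*(\zeta)\ge -h(0)$ by testing with $x=0$ (the paper reads this off directly from the supremum defining $H^*$, which is the same computation). Your added remarks on finiteness are a harmless elaboration of what the paper leaves implicit.
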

\begin{proof}
By definition, we have
\[\begin{split}
 H^*(\zeta) &= \sup_{x\in X} \langle \zeta,x\rangle - \frac12\|x\|_X^2 -h(x) \\
 &= - \inf_{x\in X} \left( \frac12\|x-\zeta\|_X^2 +h(x) - \frac12\|\zeta\|_X^2 \right)\\
 & = -\env_h(\zeta) + \frac12\|\zeta\|_X^2.
\end{split}\]
Due to \cref{prop_diff_env}, it follows that $H^*$ is \Frechet differentiable, and the expression of $\nabla H^*$ follows.
Boundedness from below is a direct consequence of the definition of $H^*$.
\end{proof}

\subsection{Optimality conditions for the dual problem}
\label{sec_dual}

We define the functions $F:Y \to \R$  and $G:U \to \bar \R$ by
\[
 F(y) =  \frac12\|y-z\|_Y^2, \quad G(u) := \frac\alpha2\|u\|_U^2 + g(u).
\]
Let us introduce the Fenchel dual problem to \eqref{eq001}, see, e.g., \cite[Section 15.3]{BauschkeCombettes2017}.
The dual problem of \eqref{eq001} is given by
\begin{equation}\label{dual}
 \min_{\xi \in Y} F^*(-\xi) + G^*( S^*\xi) =: \Phi(\xi)
\end{equation}
where
% \[
%  F^* := \left( \frac12\|\cdot-z\|_Y^2  \right)^*,
%  \quad
%  G^* := \left( \frac\alpha2\|\cdot\|_U^2 + g\right)^*,
% \]
% so that
\[
F^*(\zeta)= \frac12\|\zeta+z\|_Y^2 - \frac12\|z\|_Y^2
\]
and
\[
 G^*(u) = \frac1{2\alpha} \|u\|_U^2 - \alpha \env_{g/\alpha}(u/\alpha).
\]
Due to the structure of the problem both functions $F^*$ and $G^*$ can be evaluated without the need to compute conjugates, only the Moreau-Yosida envelope of $g/\alpha$ is needed.
%
% \begin{equation}\label{CQ}\tag{CQ}
%  0 \in \core(\dom g - S \dom f).
% \end{equation}

\begin{lemma}\label{lem_dual} %Suppose \eqref{CQ}.
The dual problem \eqref{dual} is
uniquely
solvable.
Let $\bar\xi \in Y$. Then $\bar\xi$ solves \eqref{dual} if and only if
\[
 \nabla \Phi(\bar\xi) = \bar\xi - z + S\prox_{g/\alpha}(S^*\bar\xi/\alpha) = 0.
\]
\end{lemma}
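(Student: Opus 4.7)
The plan is to compute $\nabla\Phi$ explicitly via the chain rule, relying on \cref{lem_diff_conjugate} for the nonsmooth summand and a direct calculation for the quadratic summand, and then to prove unique solvability of \eqref{dual} by showing $\Phi$ is strongly convex, coercive, and continuous, so that minimizers coincide with critical points.

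First, to obtain $\nabla G^\ast$, I would factor $G(u)=\alpha\bigl(\tfrac12\|u\|_U^2+(g/\alpha)(u)\bigr)$ so that the elementary scaling identity $(\alpha h)^\ast(u^\ast)=\alpha\,h^\ast(u^\ast/\alpha)$ applies with $h=g/\alpha$. Invoking \cref{lem_diff_conjugate} on $h=g/\alpha$ then yields both the closed form
\[
G^\ast(u^\ast)=\frac{1}{2\alpha}\|u^\ast\|_U^2-\alpha\,\env_{g/\alpha}(u^\ast/\alpha)
\]
quoted above, and the \Frechet differentiability with $\nabla G^\ast(u^\ast)=\prox_{g/\alpha}(u^\ast/\alpha)$. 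For $F^\ast(\zeta)=\tfrac12\|\zeta+z\|_Y^2-\tfrac12\|z\|_Y^2$, differentiability is immediate with $\nabla F^\ast(\zeta)=\zeta+z$. Applying the chain rule (with the outer map $\xi\mapsto-\xi$ contributing a minus sign and the composition with $S^\ast$ contributing $S$ under the adjoint) gives
\[
\nabla\Phi(\xi)=-\nabla F^\ast(-\xi)+S\nabla G^\ast(S^\ast\xi)=\xi-z+S\prox_{g/\alpha}(S^\ast\xi/\alpha),
\]
which is precisely the claimed expression.

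Next, I would establish unique solvability. Observe that $F^\ast(-\xi)=\tfrac12\|\xi-z\|_Y^2-\tfrac12\|z\|_Y^2$ is strongly convex in $\xi$ with parameter $1$, and $\xi\mapsto G^\ast(S^\ast\xi)$ is convex as the composition of the convex function $G^\ast$ with a bounded linear map. Hence $\Phi$ is strongly convex. Since $0\in\dom g\subset\dom G$, the Fenchel--Young inequality gives $G^\ast(u^\ast)\ge\langle u^\ast,0\rangle-G(0)=-g(0)>-\infty$, so
\[
\Phi(\xi)\ge\tfrac12\|\xi\|_Y^2-\langle\xi,z\rangle_Y-g(0),
\]
which is coercive on $Y$. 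Lower semicontinuity of $\Phi$ follows from that of $F^\ast$ and $G^\ast$ (both are convex conjugates on a Hilbert space, hence lsc). A strongly convex, lsc, coercive function on the Hilbert space $Y$ admits a unique minimizer.

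Finally, since $\Phi$ is convex and \Frechet differentiable on all of $Y$, a point $\bar\xi$ is a global minimizer if and only if $\nabla\Phi(\bar\xi)=0$, which together with the gradient formula derived above gives the stated optimality condition. I do not expect a serious obstacle here: the only step requiring any care is the scaling argument that reduces $G^\ast$ to $H^\ast$ with $h=g/\alpha$, and the verification that $0\in\dom g$ is enough to pin down a lower bound on $G^\ast$ that delivers coercivity of $\Phi$.
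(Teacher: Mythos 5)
Your proposal is correct and follows essentially the same route as the paper: existence and uniqueness from coercivity and strict (here, strong) convexity of the $F^*$ part, the equivalence of minimality with $\nabla\Phi(\bar\xi)=0$ from convexity plus \Frechet differentiability, and the gradient formula from the differentiability of the Moreau--Yosida envelope (\cref{prop_diff_env} via \cref{lem_diff_conjugate}). You merely make explicit some details the paper leaves implicit, such as the scaling identity reducing $G^*$ to $H^*$ with $h=g/\alpha$ and the Fenchel--Young lower bound on $G^*$ that secures coercivity of the sum.
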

\begin{proof}
The function $\Phi$ is continuous and convex. Since $F^*$ is coercive, existence of solutions can be proven easily.
Uniqueness of minimizers is a consequence of the strict convexity of $F^*$.
The dual problem \eqref{dual} is the minimization problem of the convex and \Frechet differentiable function $\Phi$.
Then $\nabla \Phi(\bar\xi) = 0$ is necessary and sufficient for optimality of $\bar\xi$.
The expression of the derivative is a consequence of \cref{prop_diff_env}.
% \[
%  \nabla G^*(u) = \frac1\alpha u - \alpha \left( \frac1\alpha u - \prox_{g/\alpha}(u/\alpha)   \right) \frac1\alpha
% \]
%
\end{proof}

\begin{lemma}\label{lem_convert_primal_dual}
If $\bar u\in U$ solves the primal problem \eqref{eq001} then $\bar\xi:=-(S\bar u -z)$ solves  \eqref{dual}.
Conversely, if $\bar\xi \in Y$ solves \eqref{dual} then $\bar u:=\prox_{g/\alpha}(S^*\bar\xi/\alpha)$ solves \eqref{eq001}.
\end{lemma}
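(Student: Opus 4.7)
The plan is to exploit the two first-order optimality systems already at our disposal: equation \eqref{eq_primal_noc} for the primal problem and the gradient equation $\nabla\Phi(\bar\xi)=0$ from \cref{lem_dual} for the dual. The bridge between them is the proximal-operator characterization in \cref{prop_prox_characterization}, which translates a subdifferential inclusion into a fixed-point identity and vice versa. I expect no genuine obstacle; the argument is essentially algebraic manipulation of the two optimality systems using the relation $\bar\xi = z - S\bar u$ together with $\bar u = \prox_{g/\alpha}(S^*\bar\xi/\alpha)$.

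For the first implication, I would start from \eqref{eq_primal_noc}, set $\bar\xi := -(S\bar u - z) = z - S\bar u$, and divide by $\alpha$ to rewrite the inclusion as
\[
 \frac{1}{\alpha} S^*\bar\xi \in \bar u + \partial(g/\alpha)(\bar u).
\]
Applying \cref{prop_prox_characterization} to $h = g/\alpha$ with $\zeta = S^*\bar\xi/\alpha$ and $x = \bar u$ then yields $\bar u = \prox_{g/\alpha}(S^*\bar\xi/\alpha)$. Substituting this identity into $\bar\xi - z = -S\bar u$ gives exactly $\nabla\Phi(\bar\xi) = \bar\xi - z + S\prox_{g/\alpha}(S^*\bar\xi/\alpha) = 0$, and by \cref{lem_dual} $\bar\xi$ solves \eqref{dual}.

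For the converse, I would start from $\nabla\Phi(\bar\xi)=0$, define $\bar u := \prox_{g/\alpha}(S^*\bar\xi/\alpha)$, and read off from the gradient equation that $\bar\xi = z - S\bar u$. Applying \cref{prop_prox_characterization} in the opposite direction gives
\[
 0 \in \bar u - \frac{1}{\alpha}S^*\bar\xi + \partial(g/\alpha)(\bar u),
\]
and multiplying by $\alpha$ and replacing $\bar\xi$ by $z - S\bar u$ produces precisely the optimality condition \eqref{eq_primal_noc}, so $\bar u$ solves \eqref{eq001}. The only delicate point worth checking in writing this out carefully is the scaling $\partial(g/\alpha) = (1/\alpha)\partial g$, which holds since $\alpha>0$ and therefore does not cause trouble.
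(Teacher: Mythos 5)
Your proposal is correct and follows essentially the same route as the paper: both translate the primal optimality condition \eqref{eq_primal_noc} via \cref{prop_prox_characterization} into the fixed-point identity $\bar u = \prox_{g/\alpha}\bigl(-\tfrac1\alpha S^*(S\bar u - z)\bigr)$ and then identify this with $\nabla\Phi(\bar\xi)=0$ through the substitution $\bar\xi = z - S\bar u$. You have merely written out the ``elementary computations'' that the paper leaves implicit, including the harmless scaling $\partial(g/\alpha) = \tfrac1\alpha\partial g$.
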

\begin{proof}
Due to \cref{prop_prox_characterization}, the optimality condition \eqref{eq_primal_noc} is equivalent to
 \[
\bar u = \prox_{g/\alpha}\left( -\frac1\alpha S^*(S\bar u-z) \right).
 \]
Then the claim follows by elementary computations.
\end{proof}

% \clearpage

\section{Second-order semismoothness}
\label{sec_2ndorder}

For the proof of transition from global convergence to fast local convergence, a second-order Taylor expansion of the function $G^*$ is indispensible \cite{Facchinei1995}.
Following \cite{PotzlSchielaJaap2022}, we introduce the notion of second-order semismoothness.

Let $X$ be a Banach space.
We will denote the space of bilinear forms from $X \times X$ to $\R$  by $\L^{(2)}(X,\R)$.
% supplied with the norm
% \[
%  \|M\|_{ L^{(2)}(X,\R) }  := \sup \{ |M(x_1,x_2)|: \ \|x_1\|_X\le 1, \ \|x_2\|_X \le 1\}.
% \]

\begin{definition}\label{def_second_order_semismooth}
 Let $T: X \to \R$ be continuously differentiable with derivative $T'$.
 Let $\partial^2 T$ be a set-valued map $\partial^2 T: X \rightrightarrows \L^{(2)}(X,\R)$ with non-empty images.
 Then $T$ is called second-order semismooth
 with respect to $\partial^2 T$ if for all $x\in X$
 \begin{equation} \label{eq_def_semismooth_2}
  \sup_{M \in \partial^2 T(x+h)} \left|T(x+h) - T(x) - T'(x)h - \frac12  M(h,h)\right| = o(\|h\|_X^2)
 \end{equation}
 and
 \begin{equation} \label{eq_def_semismooth_1}
  \sup_{M \in \partial^2 T(x+h)} \left\|T'(x+h) - T'(x)  -  M(h,\cdot)\right\|_{X^*} = o(\|h\|_X)
 \end{equation}
 for $\|h\|_X\to 0$,
 where $M(h,\cdot)$ denotes the element of $X^*$ given by $v \mapsto M(h,v)$.
\end{definition}

Here, second-order semismoothness refers to the availability of a second-order Taylor expansion \eqref{eq_def_semismooth_2},
which is different to the use in \cite{Ulbrich2011}, where higher-order semismoothness refers to stronger estimates of the first-order expansion \eqref{eq_def_semismooth_1}
replacing $o(\|h\|_X)$ for $O(\|h\|_X^{1+\alpha})$ with $\alpha \in (0,1]$.

If $X$ is a Hilbert space, we can use operators $M \in \L(X,X)$ instead of bilinear forms, and replace $M(h,h)$ and $M(h,\cdot)$ by $\langle Mh,h\rangle$ and $Mh$.
In \cite{PotzlSchielaJaap2022} an example is presented to show that \eqref{eq_def_semismooth_2} does not imply \eqref{eq_def_semismooth_1}.
Let us prove a sufficient condition for \eqref{eq_def_semismooth_2} which is only based on assumptions on $T'$.

\begin{lemma}\label{lem_2nd_order_semi}
 Let $T: X \to \R$ be continuously differentiable with
 Bouligand differentiable $T'$, i.e., $T'$ is  directionally differentiable and
 for all $x\in X$ it holds
 \[
 \|T'(x+h) - T'(x) - T''(x;h)\|_{X^*} = o(\|h\|_X).
 \]
Let
 $\partial^2 T: X \rightrightarrows \L^{(2)}(X,\R)$ be given such that \eqref{eq_def_semismooth_1} is satisfied for all $x\in X$.
 Then \eqref{eq_def_semismooth_2} is satisfied for all $x\in X$, and hence, $T$ is second-order semismooth
 with respect to $\partial^2 T$.
\end{lemma}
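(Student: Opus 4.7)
The plan is to produce the Taylor remainder by integrating the first-order remainder and then using the bilinear-form assumption to swap $T''(x;\cdot)$ for a member of $\partial^2 T(x+h)$.

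First I would apply the fundamental theorem of calculus, using that $T$ is continuously differentiable, to write
\[
T(x+h) - T(x) - T'(x)h = \int_0^1 \bigl[T'(x+th) - T'(x)\bigr] h \, dt.
\]
Next I would insert the Bouligand expansion of $T'$ at $x$ in the direction $th$. Since $T''(x;\cdot)$ is positively homogeneous as a directional derivative, $T''(x;th) = t\,T''(x;h)$ for $t\in[0,1]$. The Bouligand hypothesis gives, for every $\varepsilon>0$ and $\|h\|_X$ small enough,
\[
\bigl\|T'(x+th) - T'(x) - t\,T''(x;h)\bigr\|_{X^*} \le \varepsilon\, t\,\|h\|_X \qquad \text{for all } t\in[0,1],
\]
so inserting this into the integrand and integrating yields
\[
\left|\, T(x+h) - T(x) - T'(x)h - \tfrac12\,T''(x;h)(h)\, \right| \le \tfrac{\varepsilon}{2}\,\|h\|_X^2,
\]
i.e.\ the Taylor remainder equals $\tfrac12 T''(x;h)(h) + o(\|h\|_X^2)$.

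Finally I would invoke assumption \eqref{eq_def_semismooth_1}. Combined with the Bouligand expansion it gives, uniformly over $M\in\partial^2 T(x+h)$,
\[
\bigl\|\,T''(x;h) - M(h,\cdot)\,\bigr\|_{X^*} \le \bigl\|T'(x+h)-T'(x)-T''(x;h)\bigr\|_{X^*} + \bigl\|T'(x+h)-T'(x)-M(h,\cdot)\bigr\|_{X^*} = o(\|h\|_X).
\]
Testing this $X^*$-bound against $h$ gives $|T''(x;h)(h) - M(h,h)| \le o(\|h\|_X)\cdot\|h\|_X = o(\|h\|_X^2)$, again uniformly in $M$. Combining with the previous step produces \eqref{eq_def_semismooth_2}.

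I do not expect a serious obstacle: the only subtlety is making sure the Bouligand error bound is uniform in $t\in[0,1]$, which is immediate because $\|th\|_X\le\|h\|_X$ and positive homogeneity lets the $t$ factor out cleanly so that a single $\varepsilon$–$\delta$ choice works for the whole integrand. The rest is bookkeeping with the triangle inequality and the $X^*$/$X$ duality pairing.
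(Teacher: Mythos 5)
Your proof is correct and follows essentially the same route as the paper: both integrate $T'$ along the segment, use positive homogeneity of the Bouligand derivative to obtain the uniform bound $\|R_t\|_{X^*}\le\varepsilon t\|h\|_X$, and invoke \eqref{eq_def_semismooth_1} evaluated at $h$ to bring in $M(h,h)$. The only cosmetic difference is that you pivot through $\tfrac12 T''(x;h)h$ whereas the paper pivots through the trapezoidal term $\tfrac12[T'(x)+T'(x+h)]h$; the estimates involved are identical.
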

\begin{proof}
For $M \in \partial^2 T(x+h)$, we have the expansion
\begin{multline*}
 T(x+h) - T(x) - T'(x)h - \frac12  M(h,h) = \\
 \left( T(x+h) - T(x) - \frac12 [T'(x)+T'(x+h)] h \right) \\
 + \frac12 \Big( T'(x+h)h - T'(x)h  -  M(h,h)\Big).
\end{multline*}
Due to \eqref{eq_def_semismooth_1}, the second addend is of order $o(\|h\|_X^2)$ for $h\to0$.
Hence, it suffices to prove
% In view of \eqref{eq_def_semismooth_1}, it remains to prove
\[
   \left|T(x+h) - T(x) - \frac12 [T'(x)+T'(x+h)] h \right| = o(\|h\|_X^2).
\]
Take $x\in X$.
Let $\epsilon>0$.
Then there is $\rho>0$ such that  $\|T'(x+h) - T'(x) - T''(x;h)\|_{X^*} \le \epsilon \|h\|_X$  for all $h$ with $\|h\|_X< \rho$.
Take $h$ with $\|h\|_X <\rho$.
Denote
\[
 R_t := T'(x+th) - T'(x) - tT''(x;h),
\]
which implies $\|R_t\|_{X^*} \le \epsilon t\|h\|_X$ for all $t\in [0,1]$.
Then we can write
\[%begin{multline*}
\begin{aligned}
 T(x+h) - T(x) - \frac12 [T'(x)+T'(x+h)] h
%  \begin{aligned}
 &= \int_0^1 [ T'(x+sh) -  T'(x)]h \ds - \frac12[ T'(x+h)-T'(x)]h\\
 &= \int_0^1 [ sT''(x;h) +R_s]h \ds  - \frac12T''(x;h)-\frac12R_1h\\
 &= \int_0^1 R_sh \ds - \frac12 R_1h
  \end{aligned}
 \]%end{multline*}
so that
\[% \begin{multline*}
   \left|T(x+h) - T(x) - \frac12 [T'(x)+T'(x+h)] h \right|  \le \epsilon \|h\|_X^2
\]%  \end{multline*}
which finishes the proof.
\end{proof}

In the smooth case, we have the following result.

\begin{corollary}[{\cite[Proposition 5]{PotzlSchielaJaap2022}}]
 Let $T:  X \to \R$ be twice continuously differentiable. Then $T$ is second-order semismooth with respect to $\partial^2 T(x) := \{ T''(x)\}$.
\end{corollary}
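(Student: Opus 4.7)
The plan is to verify the two conditions of \cref{def_second_order_semismooth} for the single-valued map $\partial^2 T(x) = \{T''(x)\}$. Since \cref{lem_2nd_order_semi} already reduces \eqref{eq_def_semismooth_2} to \eqref{eq_def_semismooth_1} (once we know $T'$ is Bouligand differentiable), the main work is to establish \eqref{eq_def_semismooth_1}, and this should follow from the fundamental theorem of calculus together with the continuity of $T''$.

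First I would observe that because $T$ is twice continuously differentiable, $T'$ is \Frechet differentiable and in particular directionally differentiable with $T''(x;h) = T''(x)h$. The estimate
\[
 T'(x+h)-T'(x)-T''(x)h = \int_0^1 [T''(x+sh)-T''(x)]h \ds
\]
combined with continuity of $T''$ at $x$ yields $\|T'(x+h)-T'(x)-T''(x;h)\|_{X^*}=o(\|h\|_X)$, so $T'$ is Bouligand differentiable in the sense required by \cref{lem_2nd_order_semi}.

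Next I would verify \eqref{eq_def_semismooth_1} for the choice $M = T''(x+h) \in \partial^2 T(x+h)$. Writing
\[
 T'(x+h)-T'(x)-T''(x+h)h = \int_0^1 [T''(x+sh)-T''(x+h)]h \ds,
\]
continuity of $T''$ at $x$ provides, for any $\epsilon>0$, a radius $\rho>0$ such that $\|T''(y)-T''(x)\|\le\epsilon$ whenever $\|y-x\|_X<\rho$. For $\|h\|_X<\rho$ and $s\in[0,1]$, the triangle inequality gives $\|T''(x+sh)-T''(x+h)\|\le 2\epsilon$, so the norm of the right-hand side is bounded by $2\epsilon\|h\|_X$. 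This is precisely \eqref{eq_def_semismooth_1}.

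Having established \eqref{eq_def_semismooth_1} and the Bouligand differentiability of $T'$, \cref{lem_2nd_order_semi} immediately delivers \eqref{eq_def_semismooth_2}, completing the proof. No real obstacle is expected here; the only subtlety is being careful that in \eqref{eq_def_semismooth_1} the operator $M$ is evaluated at $x+h$ rather than at $x$, which forces the small detour through the triangle inequality using continuity of $T''$ at $x$ for both arguments $x+sh$ and $x+h$.
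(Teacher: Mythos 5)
Your argument is correct and matches the route the paper intends: the corollary is placed directly after \cref{lem_2nd_order_semi} precisely so that one only has to check its two hypotheses, namely Bouligand differentiability of $T'$ (immediate from \Frechet differentiability of $T'$) and the first-order estimate \eqref{eq_def_semismooth_1}, which you obtain from the fundamental theorem of calculus and the continuity of $T''$ exactly as in the cited reference. No gaps; the care you take with $M$ being evaluated at $x+h$ rather than $x$ is the right (and only) subtlety.
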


% \clearpage

\section{Conjugate Gradients}
\label{sec_cg}

Let us recall the celebrated conjugate gradient method to solve linear systems with positive definite operators.
Let $X$ be a Hilbert space, $A \in \L(X,X)$ self-adjoint and positive definite, and $b\in X$.
The  CG algorithm to solve $Ax=b$ in the notation from \cite[Section 38]{TrefethenBau1997} can be found in \cref{alg_CG}.

\begin{algorithm}
 \begin{algorithmic}
  \State{Input: tolerance $\dtolcg>0$.}
  \State{Set $x_0=0$, $r_0=b$, $p_0=r_0$, $k=0$.}
  \While{$\|r_k\| > \dtolcg$} % \epsilon \|g_0\|$}
    \State{$\alpha_k = \frac{\|r_k\|_X^2}{\langle Ap_k,p_k\rangle}$}
    \State{$x_{k+1} = x_k + \alpha_k p_k$}
    \State{$r_{k+1} = r_k - \alpha_k Ap_k$}
    \State{$\beta_k = \frac{\|r_{k+1}\|_X^2}{\|r_k\|_X^2}$}
    \State{$p_{k+1} = r_{k+1} + \beta_k p_k$}
    \State{Set $k:=k+1$}
  \EndWhile
 \end{algorithmic}
 \caption{CG algorithm to solve $Ax=b$ with tolerance $\dtolcg$} \label{alg_CG}
\end{algorithm}

The classical orthogonality relations and convergence guarantees transfer to the Hilbert space case, see, e.g., \cite{Daniel1967}.
If $A=I+K$ with $K$ compact we have r-superlinear convergence of $x_k \to A^{-1}b$ \cite{Winther1980}.
These results imply that for positive tolerance $\dtolcg$, algorithm \cref{alg_CG} terminates after finitely many steps.

In the following, we need an estimate of $ \langle x_k,b\rangle$.
A similar result can be found in \cite[Lemma 3.1]{ZhaoSunToh2010}. We give a simplified proof.

\begin{lemma}\label{lem_cg_estimate}
Let $A \in \L(X,X)$ be self-adjoint and positive definite, and $b\in X$.
Let $(x_k)$ be iterates of \cref{alg_CG}. Then
\[
  \langle x_k,b\rangle  \ge \frac1{\|A\|_{\L(X,X)}} \|b\|_X^2.
\]
\end{lemma}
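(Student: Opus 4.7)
The plan is to write $x_k$ as a linear combination of the conjugate directions $p_0,\dots,p_{k-1}$, evaluate $\langle x_k,b\rangle$ explicitly using the standard CG orthogonality and $A$-conjugacy relations, and then observe that the resulting sum has only nonnegative terms so that keeping the single summand corresponding to $j=0$ already yields the asserted bound.

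First, unrolling $x_{j+1}=x_j+\alpha_j p_j$ from $x_0=0$ gives $x_k=\sum_{j=0}^{k-1}\alpha_j p_j$, hence
\[
\langle x_k,b\rangle=\sum_{j=0}^{k-1}\frac{\|r_j\|_X^2}{\langle A p_j,p_j\rangle}\,\langle p_j,b\rangle.
\]
Next, I want to show $\langle p_j,b\rangle=\|r_j\|_X^2$. Using the recursion $r_{i+1}=r_i-\alpha_i A p_i$, I rewrite $b=r_0=r_j+\sum_{i=0}^{j-1}\alpha_i A p_i$. The $A$-conjugacy relation $\langle p_j,Ap_i\rangle=0$ for $i<j$, which transfers verbatim to the Hilbert space setting as recalled in the paragraph preceding the lemma, then yields $\langle p_j,b\rangle=\langle p_j,r_j\rangle$. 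Expanding $p_j=r_j+\beta_{j-1}p_{j-1}$ and using the further standard orthogonality $\langle p_{j-1},r_j\rangle=0$, I obtain $\langle p_j,r_j\rangle=\|r_j\|_X^2$, as desired.

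Combining these identities with the elementary estimate $\langle A p_j,p_j\rangle\le\|A\|_{\L(X,X)}\,\|p_j\|_X^2$ gives
\[
\langle x_k,b\rangle=\sum_{j=0}^{k-1}\frac{\|r_j\|_X^4}{\langle A p_j,p_j\rangle}\ge \frac{1}{\|A\|_{\L(X,X)}}\sum_{j=0}^{k-1}\frac{\|r_j\|_X^4}{\|p_j\|_X^2}.
\]
All summands on the right are nonnegative, so the sum is at least its $j=0$ term. Since the initialization gives $r_0=p_0=b$, this term equals exactly $\|b\|_X^2$, and the claim follows.

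The argument is essentially bookkeeping: there is no real obstacle beyond invoking the two classical CG identities (pairwise $A$-conjugacy of the $p_j$ and $\langle p_{j-1},r_j\rangle=0$). The mildly clever step is the last one, realizing that one does not need to telescope or estimate the full sum because dropping all but the first term is already sharp enough; this is what makes the proof noticeably shorter than the one in \cite{ZhaoSunToh2010}.
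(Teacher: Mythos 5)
Your proof is correct and takes essentially the same approach as the paper's: both arguments reduce the claim to the fact that $\langle p_j,b\rangle\ge 0$ for every $j$, so that all contributions beyond the first CG step are nonnegative and the first step alone, $\alpha_0\|b\|_X^2=\|b\|_X^4/\langle Ab,b\rangle\ge\|b\|_X^2/\|A\|_{\L(X,X)}$, already yields the bound. The only cosmetic difference is the route to that sign fact: you derive the exact identity $\langle p_j,b\rangle=\|r_j\|_X^2$ from $A$-conjugacy together with $\langle p_{j-1},r_j\rangle=0$, whereas the paper uses the residual orthogonality $\langle r_j,r_0\rangle=0$ to get $\langle p_j,b\rangle=\prod_{i<j}\beta_i\,\|b\|_X^2$, which telescopes to the same quantity.
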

\begin{proof}
For $k=1$, we have $x_0=0$ and $p_0=b$ so that
\[
\langle x_1 , b\rangle = \alpha_0 \|b\|_X^2 = \frac{\|b\|_X^4}{\langle Ab,b\rangle} \ge \frac1{\|A\|_{\L(X,X)}} \|b\|_X^2.
\]
Due to the orthogonality relations of the residuals, \cite[(38.4)]{TrefethenBau1997}, we have $\langle r_k,b \rangle = \langle r_k,r_0 \rangle =0$ for all $k\ge1$.
Then we get for the conjugate directions
\[
 \langle p_{k+1},b \rangle  = \langle r_{k+1},b \rangle + \beta_k \langle p_k,b \rangle  = \beta_k \langle p_k,b \rangle,
\]
so that $\langle p_{k+1},b \rangle = \prod_{i=0}^k \beta_i \langle p_0,b \rangle$.
Since $p_0 =r_0=b$ and $\beta_i>0$, this implies $\langle p_k,b \rangle\ge0$ for all $k\ge0$.
Using this in the update formula for $x_{k+1}$, we find
\[
 \langle x_{k+1},b \rangle = \langle x_k,b \rangle + \alpha_k\langle p_k,b \rangle \ge  \langle x_k,b \rangle,
\]
so that the sequence $( \langle x_k,b \rangle)$ is monotonically increasing.
\end{proof}

% \clearpage

\section{Semismooth Newton}
\label{sec_ssn}

Here, we want to investigate a semismooth Newton method to minimize the function $\Phi$ in \eqref{dual}, which can be written as
\begin{equation} \label{def_Phi}
\Phi(\xi) = \frac12 \|\xi-z\|_Y^2 -\frac12\|z\|_Y^2 +  \frac1{2\alpha} \|S^*\xi\|_U^2 - \alpha \env_{g/\alpha}(S^*\xi/\alpha).
\end{equation}
Its gradient is given by \cref{lem_dual} as
\begin{equation} \label{def_DPhi}
 \nabla \Phi(\xi) = \xi - z + S\prox_{g/\alpha}(S^*\xi/\alpha) .
\end{equation}

\begin{corollary}\label{cor_DPhi_Lipschitz}
The map $\Phi$ is continuously differentiable. In addition, there is $L>0$ such that $\| \nabla \Phi(y_1) - \nabla \Phi(y_2)\|_Y \le L \|y_1-y_2\|_Y$ for all $y_1,y_2 \in Y$.
Consequently,
\[
 \left| \Phi(y+h) - \Phi(y) - \Phi'(y)h\right| \le \frac L2 \|h\|_Y^2.
\]
\end{corollary}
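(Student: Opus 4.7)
The plan is to read off continuous differentiability of $\Phi$ directly from \cref{lem_dual}, which already provides the formula for $\nabla\Phi$ in \eqref{def_DPhi}. The continuity of $\nabla\Phi$ is already embedded in the (soon to be proved) Lipschitz estimate, so the bulk of the work is the Lipschitz bound.

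For the Lipschitz estimate, I would use the explicit formula \eqref{def_DPhi} and apply the triangle inequality:
\[
\|\nabla\Phi(y_1)-\nabla\Phi(y_2)\|_Y \le \|y_1-y_2\|_Y + \|S\|_{\L(U,Y)} \bigl\|\prox_{g/\alpha}(S^*y_1/\alpha)-\prox_{g/\alpha}(S^*y_2/\alpha)\bigr\|_U.
\]
Then \cref{prop_prox_lipschitz} gives nonexpansivity of $\prox_{g/\alpha}$, which together with $\|S^*\|_{\L(Y,U)}=\|S\|_{\L(U,Y)}$ bounds the second term by $\alpha^{-1}\|S\|_{\L(U,Y)}^2\|y_1-y_2\|_Y$. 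Setting $L:=1+\|S\|_{\L(U,Y)}^2/\alpha$ yields the claim.

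The descent-lemma inequality is the standard consequence of Lipschitz continuity of $\nabla\Phi$. I would write
\[
\Phi(y+h)-\Phi(y)=\int_0^1 \langle\nabla\Phi(y+th),h\rangle_Y\dt,
\]
subtract $\langle\nabla\Phi(y),h\rangle_Y=\int_0^1\langle\nabla\Phi(y),h\rangle_Y\dt$, apply Cauchy--Schwarz and the Lipschitz bound $\|\nabla\Phi(y+th)-\nabla\Phi(y)\|_Y\le Lt\|h\|_Y$, and integrate to obtain the factor $L/2$.

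There is no real obstacle here; the only thing to be mildly careful about is the factor $1/\alpha$ coming from the argument $S^*\xi/\alpha$ of $\prox_{g/\alpha}$, which is what makes the Lipschitz constant depend on $\|S\|^2/\alpha$ rather than $\|S\|^2$. The proof is essentially a two-line computation followed by the textbook integration argument.
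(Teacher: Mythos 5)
Your proposal is correct and follows essentially the same route as the paper: differentiability via the explicit gradient formula (ultimately \cref{prop_diff_env}), the Lipschitz bound from nonexpansivity of the proximal operator (\cref{prop_prox_lipschitz}) combined with boundedness of $S$ and $S^*$, and the standard integral descent-lemma argument. You merely spell out the explicit constant $L=1+\|S\|_{\L(U,Y)}^2/\alpha$ and the integration step, which the paper leaves implicit.
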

\begin{proof}
 Differentiability of $\Phi$ is a consequence of the differentiability of $\env_{g/\alpha}$ by \cref{prop_diff_env}.
 By \cref{prop_prox_lipschitz}, the proximal operator $\prox_{g/\alpha}$ is Lipschitz continuous.
 Hence, $\nabla \Phi$ is Lipschitz continuous.
 The claimed estimate follows is then a consequence of this Lipschitz property.
\end{proof}

Let us define for convenience
\begin{equation} \label{def_Psi}
 \Psi( \xi ):= \frac1{2\alpha} \|S^*\xi\|_U^2 - \alpha \env_{g/\alpha}(S^*\xi/\alpha),
\end{equation}
so that $\nabla\Psi(\xi) = S\prox_{g/\alpha}(S^*\xi/\alpha)$.
Throughout this section, we assume that the following assumption is satisfied.

\begin{assumption}\label{ass_semismooth}
There is a set-valued map $\partial^2 \Psi : Y \rightrightarrows \mathcal B$,
where
\[
 \mathcal B := \{M \in \L(Y,Y): \ M=M^*, \ \langle My,y\rangle \ge 0 \ \forall y \in Y\},
\]
such that $\Psi$ given by \eqref{def_Psi} is second-order semismooth with respect to $\partial^2 \Psi$, see \cref{def_second_order_semismooth}.
In addition, $\partial^2 \Psi$ is bounded, i.e., there is $c>0$ such that for all $y \in Y$ and $M\in \partial^2 \Psi (y)$ it holds
 \[
 \|M\|_{\L(Y,Y)} \le c.
 \]
\end{assumption}

Let us emphasize that in the algorithm we will not need to know the precise values of the Lipschitz constant $L$ or  the bound $c$.
For the particular case of Nemyzki operators on $L^2$ spaces, validity of \cref{ass_semismooth} is discussed in \cref{sec_nemyzki}.

\begin{corollary}\label{cor_D2Phi}
%  Let \cref{ass_semismooth} be satisfied. Then
 $\Phi$ is second-order semismooth with respect to $\partial^2 \Phi := I + \partial^2 \Psi$.
\end{corollary}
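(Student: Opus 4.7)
The plan is to decompose $\Phi = Q + \Psi$, where $Q(\xi) := \frac12 \|\xi - z\|_Y^2 - \frac12 \|z\|_Y^2$ is the smooth quadratic part, and then exploit that second-order semismoothness is additive under sums with smooth summands.

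First I would observe that $Q$ is a quadratic polynomial with $Q'(\xi)h = \langle \xi - z, h\rangle_Y$ and constant second derivative $Q''(\xi) \equiv I$, identified with the bilinear form $(h, v) \mapsto \langle h, v\rangle_Y$. Consequently both remainders in \eqref{eq_def_semismooth_2} and \eqref{eq_def_semismooth_1} vanish identically for $Q$ with the single-valued map $\partial^2 Q \equiv \{I\}$. Meanwhile, \cref{ass_semismooth} gives second-order semismoothness of $\Psi$ with respect to $\partial^2 \Psi$.

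To conclude, I would fix $\xi, h \in Y$ and $M \in \partial^2 \Psi(\xi + h)$, so that $I + M \in (I + \partial^2 \Psi)(\xi + h)$. Writing the second-order residual of $\Phi$ at $\xi$ in direction $h$ with bilinear form $I + M$ as the sum of the residual from $Q$ (which is zero) and the residual from $\Psi$ with $M$ (which is $o(\|h\|_Y^2)$ uniformly in $M$ by \eqref{eq_def_semismooth_2}), the required estimate transfers directly from $\Psi$ to $\Phi$. The first-order estimate \eqref{eq_def_semismooth_1} for $\nabla \Phi$ is obtained in exactly the same way, noting that $\nabla \Phi(\xi) - \nabla Q(\xi) = \nabla \Psi(\xi)$ and that the contribution of $Q$ to the residual is identically zero.

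The main obstacle here is essentially none: the statement is a bookkeeping corollary relying on the linearity of the Taylor expansion. The only point worth flagging for later use in \cref{sec_ssn} is that $I + \partial^2 \Psi$ takes values in the class $\mathcal B$ of self-adjoint positive semidefinite operators from \cref{ass_semismooth} (in fact with lower bound $1$ thanks to the $I$ summand), which will be convenient when these generalized Hessians are supplied as the operator $A$ in the CG routine \cref{alg_CG}.
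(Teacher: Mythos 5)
Your argument is correct and matches the paper's (implicit) reasoning exactly: the paper states this as an immediate corollary of \cref{ass_semismooth} via the decomposition $\Phi(\xi) = \frac12\|\xi-z\|_Y^2 - \frac12\|z\|_Y^2 + \Psi(\xi)$ from \eqref{def_Phi}, with the quadratic part contributing zero to both Taylor remainders, which is precisely what you spell out. No gaps; your closing remark that the resulting operators are self-adjoint with $\langle Mh,h\rangle \ge \|h\|_Y^2$ is indeed what \cref{cor_bound_M} records next.
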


\begin{corollary}\label{cor_bound_M}
%  Let \cref{ass_semismooth} be satisfied.
 Let $M \in  \partial^2\Phi(y)$ for some $y \in Y$. Then $M \in \L(Y,Y)$ is continuously invertible with $\|M\|_{\L(Y,Y)} \le 1+c$ and $\|M^{-1}\|_{\L(Y,Y)} \le 1$.
 Moreover, $\langle Md,d\rangle \ge \|d\|_Y^2$ for all $d\in Y$.
\end{corollary}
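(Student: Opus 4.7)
The plan is to exploit the simple structure $M = I + N$ with $N \in \partial^2\Psi(y)$, coming from \cref{cor_D2Phi}, and read off all three claims from the properties of $N$ imposed in \cref{ass_semismooth}.

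First I would write $M = I + N$ with $N \in \partial^2\Psi(y) \subset \mathcal B$. The norm bound is then immediate from the triangle inequality: $\|M\|_{\L(Y,Y)} \le 1 + \|N\|_{\L(Y,Y)} \le 1 + c$. For the coercivity estimate, since $N$ is positive semidefinite by the definition of $\mathcal B$, for every $d \in Y$ one has
\[
 \langle Md, d\rangle = \|d\|_Y^2 + \langle Nd, d\rangle \ge \|d\|_Y^2.
\]

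The main (and only slightly non-trivial) point is the invertibility together with the bound $\|M^{-1}\|_{\L(Y,Y)} \le 1$. From the coercivity estimate just proven and the Cauchy--Schwarz inequality, $\|Md\|_Y \, \|d\|_Y \ge \langle Md,d\rangle \ge \|d\|_Y^2$, so $\|Md\|_Y \ge \|d\|_Y$ for all $d\in Y$. This shows $M$ is injective and has closed range. Since $M = M^*$ (as both $I$ and $N$ are self-adjoint), the range of $M$ is also dense: if $v \perp \operatorname{range}(M)$, then $\langle v, Md\rangle = \langle Mv, d\rangle = 0$ for all $d$, hence $Mv = 0$, and injectivity forces $v = 0$. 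Thus $M$ is bijective, and the bounded inverse theorem yields continuous invertibility with $\|M^{-1}\|_{\L(Y,Y)} \le 1$ from the estimate $\|Md\|_Y \ge \|d\|_Y$.

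No step here looks like a real obstacle; the corollary is essentially a direct consequence of the inclusion $\partial^2\Psi(y) \subset \mathcal B$ combined with self-adjointness, so the proof reduces to recording these observations cleanly.
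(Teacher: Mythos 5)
Your proof is correct and follows exactly the route the paper intends: the paper states \cref{cor_bound_M} without proof, treating it as an immediate consequence of the decomposition $M = I + N$ with $N \in \partial^2\Psi(y) \subset \mathcal B$ from \cref{cor_D2Phi} and \cref{ass_semismooth}, which is precisely what you carry out (including the standard coercivity-plus-self-adjointness argument for invertibility). Nothing is missing.
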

The core of the semismooth Newton methods is the Newton equation to solve a linearization of  $ \nabla \Phi(\xi)=0$.
Given an iterate $\xi_k \in Y$, we choose $M_k\in\partial^2 \Psi(\xi_k)$,
and determine $d_k \in Y$ as a solution of the Newton equation
\begin{equation}\label{eq_ssn_xi}
 (I + M_k)d_k = -  \nabla \Phi(\xi_k) = -\left(\xi_k - z + S\prox_{g/\alpha}(S^*\xi_k/\alpha) \right).
\end{equation}
In the upcoming convergence analysis, we will use both remainder estimates \eqref{eq_def_semismooth_2} and \eqref{eq_def_semismooth_1}. The first-order
estimate is an ingredient of the classical local convergence proof of semismooth Newton \cite{Ulbrich2011}, while the second-order estimate is
needed in the proof of transition to fast local convergence with full steps ($t_k=1$).

\begin{algorithm}[htbp]
\begin{algorithmic}
 \State Choose $\dtol>0$,
%  $\tau_1,\tau_2\in (0,1)$,
 $\sigma\in(0,1/2)$, $\beta\in (0,1)$,
%  $\eta\in (0,1)$,
$\eta\ge0$,
 $\tau\in(0,1]$.
\Comment{Parameters}

\State Choose $\xi_0 \in Y$, set $k:=0$.
\Comment{Initialization}

 \While{ $\|\nabla \Phi(\xi_k)\|_Y > \dtol$}
 \State Choose $M_k \in \partial^2\Phi(\xi_k) $, see \cref{cor_D2Phi}.

 \State Compute $d_k \in Y$  such that   \Comment{Inexact solve of Newton equation}
     \begin{equation}\label{eq_alg_inexact}
      \| M_k d_k + \nabla \Phi(\xi_k)\|_Y \le  \eta\|\nabla \Phi(\xi_k)\|_Y^{1+\tau}
     \end{equation}
   \State   using \cref{alg_CG} with tolerance \Comment{CG}
   \[ \dtolcg := \eta\|\nabla \Phi(\xi_k)\|_Y^{1+\tau}. \]

    \State Compute the smallest $l_k\in \N_0$ such that\Comment{Armijo linesearch}
     \begin{equation}\label{eq_alg_armijo}
     \Phi(\xi_k + \beta^{l_k} d_k) - \Phi(\xi_k) \le \sigma \beta ^{l_k}  \langle d_k, \nabla \Phi(\xi_k) \rangle
     \end{equation}

    \State Set $t_k:=\beta^{l_k}$, $\xi_{k+1} := \xi_k + t_k d_k$, $k:=k+1$.

 \EndWhile

\end{algorithmic}
\caption{Globalized inexact semismooth Newton method}
\label{alg_ssn}
\end{algorithm}

\cref{alg_ssn} is basically \cite[Algorithm 4.1]{MilzarekSchaippUlbrich2024}. The convergence proof in the finite-dimensional case is given in \cite[Theorem 4.1]{MilzarekSchaippUlbrich2024}.
For the special case of box constraints and exact Newton solves ($\eta=0$), \cref{alg_ssn} is equivalent to \cite[Algorithm 4.2]{HinzeVierling2012oms}.

%
%
% \begin{remark}
%  \cite{MilzarekSchaippUlbrich2024,ZhaoSunToh2010} use the condition
%  \[
%       \| M_k d_k + \nabla \Phi(\xi_k)\|_Y \le \min( \eta, \|\nabla \Phi(\xi_k)\|_Y^{1+\tau} )
%   \]
%   instead of \eqref{eq_alg_inexact}
% \end{remark}
%

\begin{lemma}
 \cref{alg_ssn} is well-defined.
\end{lemma}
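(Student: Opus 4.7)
The plan is to verify three things in turn: (i) for every $\xi_k$ with $\|\nabla\Phi(\xi_k)\|_Y > \dtol$, \cref{alg_CG} applied to the Newton equation produces, in finitely many inner iterations, a $d_k$ satisfying the inexactness condition \eqref{eq_alg_inexact}; (ii) this $d_k$ is a strict descent direction for $\Phi$ at $\xi_k$; (iii) the backtracking linesearch \eqref{eq_alg_armijo} terminates with some finite exponent $l_k$.

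For (i), by \cref{cor_bound_M} the chosen $M_k \in \partial^2\Phi(\xi_k)$ is self-adjoint, uniformly positive definite with $\langle M_k d, d\rangle \ge \|d\|_Y^2$ for all $d\in Y$, and bounded by $\|M_k\|_{\L(Y,Y)} \le 1+c$. Hence \cref{alg_CG} applied to $M_k d = -\nabla\Phi(\xi_k)$ is well defined, and its classical Hilbert-space convergence theory (as in \cite{Daniel1967}) guarantees finite termination for any strictly positive stopping tolerance. Since the outer while-loop only proceeds when $\|\nabla\Phi(\xi_k)\|_Y > \dtol > 0$, we have $\dtolcg = \eta\|\nabla\Phi(\xi_k)\|_Y^{1+\tau} > 0$ (for $\eta > 0$), and the CG stopping criterion $\|r_k\|_Y \le \dtolcg$ is exactly \eqref{eq_alg_inexact}.

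For (ii), I apply \cref{lem_cg_estimate} to $A := M_k$ and $b := -\nabla\Phi(\xi_k)$. Every CG iterate, and hence the accepted $d_k$, satisfies
\[
\langle d_k,\, -\nabla\Phi(\xi_k)\rangle \;\ge\; \frac{1}{\|M_k\|_{\L(Y,Y)}}\,\|\nabla\Phi(\xi_k)\|_Y^2 \;\ge\; \frac{1}{1+c}\,\|\nabla\Phi(\xi_k)\|_Y^2 \;>\; 0,
\]
so $\langle d_k,\nabla\Phi(\xi_k)\rangle < 0$ whenever the stopping test of the outer loop has not been met.

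For (iii), by \cref{cor_DPhi_Lipschitz} the gradient $\nabla\Phi$ is globally Lipschitz, giving
\[
\Phi(\xi_k + t d_k) - \Phi(\xi_k) \;\le\; t\,\langle d_k,\nabla\Phi(\xi_k)\rangle + \tfrac{L}{2}\,t^2\,\|d_k\|_Y^2
\]
for all $t \ge 0$. Subtracting $\sigma t\langle d_k,\nabla\Phi(\xi_k)\rangle$ on both sides shows that \eqref{eq_alg_armijo} holds as soon as
\[
t \;\le\; \frac{2(1-\sigma)\,|\langle d_k,\nabla\Phi(\xi_k)\rangle|}{L\,\|d_k\|_Y^2},
\]
an upper bound which is strictly positive by (ii). Since $\beta \in (0,1)$ forces $\beta^l \to 0$, only finitely many backtracking steps are rejected, and a smallest admissible $l_k \in \N_0$ exists. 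I do not expect a real obstacle: the only delicate point is ensuring finite termination of CG in a possibly infinite-dimensional $Y$, which is handled by combining the uniform spectral bounds in \cref{cor_bound_M} with the strictly positive tolerance $\dtolcg$ enforced by the outer stopping criterion.
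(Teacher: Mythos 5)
Your proposal is correct and follows essentially the same route as the paper: positive definiteness of $M_k$ from \cref{cor_bound_M} gives finite termination of CG at the positive tolerance, \cref{lem_cg_estimate} gives the descent property, and the Lipschitz bound from \cref{cor_DPhi_Lipschitz} gives finite termination of the Armijo backtracking (the paper states this last step without the explicit computation, which it defers to \cref{lem_tk}). Your parenthetical caveat about $\eta>0$ is a fair observation, since the algorithm formally permits $\eta=0$, but the paper's argument implicitly carries the same restriction.
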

\begin{proof}
Suppose $\nabla  \Phi(\xi_k)\ne0$.
 Due to the special form of the Newton derivatives $M_k \in \partial^2\Phi(\xi_k) $ in \cref{cor_D2Phi}, it follows directly that
 the operators $M_k$ are self-adjoint, positive definite, and invertible.
 The direction $ -M_k^{-1} \nabla \Phi(\xi_k)$, satisfies \eqref{eq_alg_inexact}.
 Then \cref{alg_CG} terminates after finitely many steps.
 Due to \cref{lem_cg_estimate} the returned element $d_k$ is a
 descent direction to the \Frechet differentiable function $\Phi$.
 Under this condition, the Armijo linesearch terminates after finitely many steps.
\end{proof}

The termination criterion is motivated by the following bound, see also \cite[Lemma 4.3 (iii)]{HinzeVierling2012oms}.

\begin{lemma}\label{lem_error_bound}
 Let $\bar\xi\in Y$ be such that $\nabla \Phi(\bar\xi)=0$. Let $y \in Y$. Then
 \[
  \|y-\bar\xi\|_Y \le \|\nabla\Phi(y)\|_Y.
 \]
\end{lemma}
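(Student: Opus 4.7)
The plan is to exploit the explicit formula \eqref{def_DPhi} for $\nabla\Phi$ together with $\nabla\Phi(\bar\xi)=0$, and to test the equation $\nabla\Phi(y)-\nabla\Phi(\bar\xi) = \nabla\Phi(y)$ against the direction $y-\bar\xi$. Writing
\[
 \nabla\Phi(y) = (y-\bar\xi) + S\bigl[\prox_{g/\alpha}(S^*y/\alpha) - \prox_{g/\alpha}(S^*\bar\xi/\alpha)\bigr],
\]
taking the inner product with $y-\bar\xi$ and moving $S$ to $S^*$ on the right-hand side yields
\[
 \langle \nabla\Phi(y), y-\bar\xi\rangle_Y = \|y-\bar\xi\|_Y^2 + \bigl\langle \prox_{g/\alpha}(S^*y/\alpha) - \prox_{g/\alpha}(S^*\bar\xi/\alpha),\, S^*(y-\bar\xi)\bigr\rangle_U.
\]
The second term is non-negative, which is the heart of the argument. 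Factoring out $\alpha>0$, it equals
\[
 \alpha\,\bigl\langle \prox_{g/\alpha}(a)-\prox_{g/\alpha}(b),\,a-b\bigr\rangle_U \quad\text{with } a=S^*y/\alpha,\ b=S^*\bar\xi/\alpha,
\]
and this is non-negative by monotonicity (in fact firm non-expansiveness) of the proximal operator. Since the excerpt only cites the Lipschitz property of $\prox$ (\cref{prop_prox_lipschitz}), I would justify monotonicity either by a one-line subdifferential argument based on \cref{prop_prox_characterization} (if $x_i=\prox_{g/\alpha}(a_i)$, then $a_i-x_i\in\partial(g/\alpha)(x_i)$, and monotonicity of $\partial(g/\alpha)$ immediately gives $\langle a_1-a_2,x_1-x_2\rangle_U\ge\|x_1-x_2\|_U^2\ge 0$), or, even more cleanly within the paper's framework, by invoking convexity of $\Psi(\xi)=G^*(S^*\xi)$ (since $G^*$ is a convex conjugate) so that $\nabla\Psi(\xi)=S\prox_{g/\alpha}(S^*\xi/\alpha)$ is monotone.

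Combining the two estimates gives $\langle \nabla\Phi(y),\,y-\bar\xi\rangle_Y \ge \|y-\bar\xi\|_Y^2$. An application of the Cauchy-Schwarz inequality then yields
\[
 \|\nabla\Phi(y)\|_Y\,\|y-\bar\xi\|_Y \ge \|y-\bar\xi\|_Y^2,
\]
and dividing by $\|y-\bar\xi\|_Y$ (the case $y=\bar\xi$ being trivial) finishes the proof.

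There is no serious obstacle here; the only point requiring care is the monotonicity of $y\mapsto S\prox_{g/\alpha}(S^*y/\alpha)$, which the excerpt has not stated explicitly but which follows in one line from either the subdifferential characterization of the prox or from convexity of $\Psi$. The rest is the identity \eqref{def_DPhi} and Cauchy-Schwarz.
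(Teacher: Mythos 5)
Your proposal is correct and follows essentially the same route as the paper: the paper likewise splits $\nabla\Phi(y)-\nabla\Phi(\bar\xi)$ into the identity part plus $S\prox_{g/\alpha}(S^*\cdot/\alpha)$, invokes monotonicity of the latter as the derivative of the convex function $\Psi$ to obtain $\langle\nabla\Phi(y)-\nabla\Phi(\bar\xi),y-\bar\xi\rangle\ge\|y-\bar\xi\|_Y^2$, and concludes by Cauchy--Schwarz. Your extra care in justifying the monotonicity of the proximal operator is welcome but not a departure from the paper's argument.
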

\begin{proof}
Since $y \mapsto   S\prox_{g/\alpha}(S^*\bar\xi/\alpha)$ is monotone as the derivative of the convex function $\Psi$, it follows
\[
 \langle \nabla \Phi(y) - \nabla \Phi(\bar\xi), y-\bar\xi\rangle \ge \|\bar\xi-y\|_Y^2,
\]
which proves the claim using the Cauchy-Schwarz inequality.
\end{proof}

The inexact Newton directions are descent directions of $\Phi$. In addition,
we have the following estimate on the directional derivative.

\begin{corollary}\label{cor_66}
Let $(\xi_k)$ be a sequence of iterates of \cref{alg_ssn} together with the search directions $(d_k)$ satisfying \eqref{eq_alg_inexact}.
Then it holds
     \begin{equation}\label{eq_alg_descent}
     \langle d_k, \nabla \Phi(\xi_k) \rangle \le -\frac1{1+c}\|\nabla \Phi(\xi_k)\|_Y^2,
    \end{equation}
    where $c$ is from \cref{ass_semismooth}.
\end{corollary}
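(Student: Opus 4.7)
The plan is to apply Lemma \ref{lem_cg_estimate} directly to the CG iteration that produces $d_k$ inside Algorithm \ref{alg_ssn}. Recall that the search direction $d_k$ is generated by \cref{alg_CG} applied to the Newton system $M_k d = -\nabla \Phi(\xi_k)$, so in the notation of Lemma \ref{lem_cg_estimate} we identify $A = M_k$ and $b = -\nabla \Phi(\xi_k)$. By \cref{cor_bound_M}, $M_k$ is indeed self-adjoint and positive definite, with $\|M_k\|_{\L(Y,Y)} \le 1+c$, so the lemma applies and its hypotheses are met at every iterate $\xi_k$.

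Next, I would observe that the CG iteration inside \cref{alg_CG} starts from $x_0 = 0$, and that the lemma's estimate $\langle x_k, b\rangle \ge \|A\|_{\L(X,X)}^{-1}\|b\|_X^2$ holds for \emph{every} CG iterate, not only at termination. This is important because $d_k$ is accepted as soon as the inexactness criterion \eqref{eq_alg_inexact} is met, which can occur at any CG step. Applied here, this yields
\[
 \langle d_k, -\nabla \Phi(\xi_k)\rangle \ge \frac{1}{\|M_k\|_{\L(Y,Y)}} \|\nabla \Phi(\xi_k)\|_Y^2 \ge \frac{1}{1+c}\|\nabla \Phi(\xi_k)\|_Y^2,
\]
where the second inequality uses the upper bound on $\|M_k\|_{\L(Y,Y)}$ from \cref{cor_bound_M}. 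Multiplying by $-1$ gives the claimed estimate \eqref{eq_alg_descent}.

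I do not anticipate any genuine obstacle here: the result is essentially a repackaging of \cref{lem_cg_estimate} with the norm bound from \cref{cor_bound_M} substituted in. The only minor subtlety worth flagging explicitly is that the bound must hold for inexact CG iterates (not the exact Newton direction), but this is precisely the content of \cref{lem_cg_estimate}, whose proof shows that $\langle x_k, b\rangle$ is monotonically increasing in $k$ and already satisfies the bound after the first step.
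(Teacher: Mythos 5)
Your proof is correct and coincides with the paper's own argument: apply \cref{lem_cg_estimate} with $A=M_k$ and $b=-\nabla\Phi(\xi_k)$ to get $\langle d_k,-\nabla\Phi(\xi_k)\rangle \ge \|M_k\|_{\L(Y,Y)}^{-1}\|\nabla\Phi(\xi_k)\|_Y^2$, then bound $\|M_k\|_{\L(Y,Y)}\le 1+c$ via \cref{cor_bound_M}. Your explicit remark that the estimate holds at every CG iterate (hence also at early termination under \eqref{eq_alg_inexact}) is a correct and worthwhile clarification of what the paper leaves implicit.
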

\begin{proof}
\cref{lem_cg_estimate} implies $\langle d_k,  - \nabla \Phi(\xi_k) \rangle \ge \frac1{ \|M_k\|_{\L(Y,Y)} } \|\nabla \Phi(\xi_k)\|_Y^2$,
and the claim follows with \cref{cor_bound_M}.
\end{proof}

In the sequel, we assume that the algorithm performs infinitely many steps and generates a sequence of iterates $(\xi_k)$.
We start the convergence analysis with the following basic lemma, which uses the properties of the Armijo linesearch.

\begin{lemma} \label{lem_basic}
 Let $(\xi_k)$ be a sequence of iterates of \cref{alg_ssn} together with the step sizes $(t_k)$ and search directions $(d_k)$.
 Then
 \[
  t_k\langle d_k, \nabla \Phi(\xi_k) \rangle \to 0.
 \]
%  \begin{enumerate}
%   \item\label{lem_basic_1} $( \Phi(\xi_k))$ is monotonically decreasing and converging,
%   \item\label{lem_basic_2}  $t_k\langle d_k, \nabla \Phi(\xi_k) \rangle \to 0 $,
%   \item\label{lem_basic_3}  $\langle d_k, \nabla \Phi(\xi_k) \rangle \to 0 $,
%   \item\label{lem_basic_4}  $\|\nabla \Phi(\xi_k)\|_Y \to 0$.
%   \item\label{lem_basic_3}  $(\xi_k)$ is bounded.
%  \end{enumerate}
\end{lemma}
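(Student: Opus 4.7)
The plan is to extract the conclusion directly from the Armijo condition \eqref{eq_alg_armijo} combined with boundedness from below of $\Phi$ and the monotonicity of the sequence $(\Phi(\xi_k))$.

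First I would observe that, by \cref{cor_66}, each $d_k$ satisfies $\langle d_k, \nabla\Phi(\xi_k)\rangle \le 0$, so the Armijo inequality \eqref{eq_alg_armijo} implies
\[
\Phi(\xi_{k+1}) - \Phi(\xi_k) \le \sigma t_k \langle d_k, \nabla\Phi(\xi_k)\rangle \le 0.
\]
Hence the sequence $(\Phi(\xi_k))$ is monotonically non-increasing.

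Next I would argue that $\Phi$ is bounded from below: by \cref{lem_diff_conjugate} (applied with $h = g/\alpha$, using the assumption $0\in\dom g$), the function $G^*$, and hence $\Psi$ and therefore $\Phi$, is bounded from below; alternatively, \cref{lem_dual} provides the existence of a minimizer $\bar\xi$ of $\Phi$. In either case, the monotone sequence $(\Phi(\xi_k))$ is bounded below and therefore convergent, so $\Phi(\xi_k) - \Phi(\xi_{k+1})\to 0$.

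Finally, chaining these two facts together gives
\[
0 \le -\sigma\, t_k \langle d_k, \nabla\Phi(\xi_k)\rangle \le \Phi(\xi_k) - \Phi(\xi_{k+1}) \to 0,
\]
from which $t_k \langle d_k, \nabla\Phi(\xi_k)\rangle\to 0$ follows immediately after dividing by $\sigma>0$. There is no real obstacle here; the only subtlety is invoking the correct source for lower-boundedness of $\Phi$, but this is already available from the preceding material.
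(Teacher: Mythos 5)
Your proof is correct and follows essentially the same route as the paper: boundedness of $\Phi$ from below (via \cref{lem_diff_conjugate}), monotone decrease of $(\Phi(\xi_k))$ from the Armijo condition, and then squeezing $-\sigma t_k\langle d_k,\nabla\Phi(\xi_k)\rangle$ between $0$ and the vanishing differences $\Phi(\xi_k)-\Phi(\xi_{k+1})$. You merely spell out the final telescoping step that the paper leaves implicit.
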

\begin{proof}
 Due to \cref{lem_diff_conjugate}, $G^*$, $\Psi$, and $\Phi$ are bounded from below,
 hence $( \Phi(\xi_k))$ is monotonically decreasing and converging.
 The claim is now a consequence of  \eqref{eq_alg_armijo}.
%
%  Claim \ref{lem_basic_2} follows from \eqref{eq_alg_descent} and \eqref{eq_alg_armijo}.
%
%  Suppose $\liminf_{k\to\infty} t_k =0$. Then there is a subsequence $(t_{k_j})$ with $t_{k_j}<1$ and $t_{k_j} \to 0$.
%  It follows that the step size $t_{k_j}/\beta$ does not satisfy \eqref{eq_alg_armijo}.
%  Dividing the resulting inequality by $t_{k_j}/\beta$ and passing to the limit $t_{k_j}\to0$ results in
%  $\lim_{j\to\infty} \langle d_k, \nabla \Phi(\xi_k) \rangle =0$.
%  This argument uses the continuous differentiablity of $\Phi$.
%  This proves \ref{lem_basic_3}.
%
%  Claim \ref{lem_basic_4} is a direct consequence of \eqref{eq_alg_descent} and  \ref{lem_basic_3}.
\end{proof}

Since we have not proven until now that $(\xi_k)$ has  strongly converging subsequences, we cannot conclude
like in finite-dimensions, where $\nabla \Phi(\xi_k) \to0$ along a strongly converging subsequence follows from uniform continuity of $\nabla \Phi$.

Using \cref{cor_DPhi_Lipschitz}, we will prove a lower bound of $t_k$ next.
There, we will use the global Lipschitz continuity of $\nabla \Phi$. If we would know that $(\xi_k)$ has  strongly converging subsequences
then local Lipschitz continuity of $\nabla \Phi$ would be enough.

\begin{lemma} \label{lem_tk}
 Let $(\xi_k)$ be a sequence of iterates of \cref{alg_ssn} together with the step sizes $(t_k)$ and search directions $(d_k)$.
Then there is a constant $\sigma'>0$ such that $t_k<1$ implies
\[
 t_k \ge \sigma' \frac{  |\langle d_k, \nabla \Phi(\xi_k) \rangle| }{  \|d_k\|_Y^2 }.
\]
\end{lemma}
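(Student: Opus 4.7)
The plan is the textbook Armijo-type step-size lower bound argument, adapted to the inexact Newton setting at hand.

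First I would exploit the definition of $t_k$. Since $t_k = \beta^{l_k}$ with $l_k$ the smallest nonnegative integer satisfying \eqref{eq_alg_armijo}, the assumption $t_k < 1$ implies $l_k \ge 1$, so the exponent $l_k - 1$ violates the Armijo condition. Setting $\tilde t := t_k/\beta = \beta^{l_k-1}$, I would therefore record
\[
\Phi(\xi_k + \tilde t\, d_k) - \Phi(\xi_k) > \sigma\, \tilde t\, \langle d_k, \nabla \Phi(\xi_k)\rangle.
\]

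Next I would apply the descent lemma coming from the global Lipschitz continuity of $\nabla\Phi$ in \cref{cor_DPhi_Lipschitz}, which gives
\[
\Phi(\xi_k + \tilde t\, d_k) - \Phi(\xi_k) \le \tilde t \,\langle d_k, \nabla \Phi(\xi_k)\rangle + \frac{L}{2}\tilde t^{\,2}\|d_k\|_Y^2.
\]
Combining these two estimates yields
\[
(\sigma - 1)\tilde t\,\langle d_k, \nabla \Phi(\xi_k)\rangle < \frac{L}{2}\tilde t^{\,2}\|d_k\|_Y^2,
\]
and since $d_k$ is a descent direction by \cref{cor_66}, the bracketed inner product is negative, so I can flip the sign and divide by $1-\sigma > 0$ to obtain
\[
\tilde t > \frac{2(1-\sigma)}{L}\,\frac{|\langle d_k, \nabla \Phi(\xi_k)\rangle|}{\|d_k\|_Y^2}.
\]
Multiplying by $\beta$ gives the claim with $\sigma' := 2\beta(1-\sigma)/L$.

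There is no real obstacle here; the only thing to notice is that the argument uses the \emph{global} Lipschitz constant $L$ of $\nabla\Phi$, which is exactly why the preceding remark points out that local Lipschitz continuity would only suffice if strong convergence of a subsequence of $(\xi_k)$ were already known. The constant $\sigma'$ depends only on $\beta$, $\sigma$, and $L$ — not on $k$ — as required.
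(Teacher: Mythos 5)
Your proof is correct and follows essentially the same route as the paper: both use the descent estimate from \cref{cor_DPhi_Lipschitz} together with the fact that $t_k/\beta$ violates the Armijo condition when $l_k\ge 1$, and both arrive at the same constant $\sigma'=2\beta(1-\sigma)/L$. The only cosmetic difference is that the paper phrases the step as ``the right-hand side is non-positive on an interval of $t$-values, so $t_k/\beta$ must exceed that interval,'' while you manipulate the two inequalities directly.
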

\begin{proof}
Let $t>0$. Then
\[
 \Phi(\xi_k + t d_k) - \Phi(\xi_k) - \sigma t  \langle d_k, \nabla \Phi(\xi_k) \rangle
 \le (1- \sigma) t  \langle d_k, \nabla \Phi(\xi_k) \rangle + \frac{Lt^2}2 \|d_k\|_Y^2.
\]
The expression on the right-hand side is non-positive for all $t$ satisfying
\[
 0 \le t \le  (1- \sigma) \frac{  |\langle d_k, \nabla \Phi(\xi_k) \rangle| }{ \frac L2 \|d_k\|_Y^2 }.
\]
If $t_k<1$ satisfies the Armijo condition \eqref{eq_alg_armijo}
then $l_k\ge1$, and $\beta^{l_k-1}=t_k/\beta$ violates the Armijo condition \eqref{eq_alg_armijo}.
Consequently, $t_k/\beta$ is larger than the quantity on the right-hand side in the above inequality,
which implies the claim.
\end{proof}

Now we are in the position to prove $\nabla \Phi(\xi_k) \to0$.

\begin{lemma}\label{lem_convergence_DPhi}
 Let $(\xi_k)$ be a sequence of iterates of \cref{alg_ssn}. Then it holds  $\liminf_{k\to\infty} t_k >0$  and $\nabla \Phi(\xi_k) \to 0$ in $Y$.
\end{lemma}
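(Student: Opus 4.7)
The plan is to chain together the machinery already assembled in the preceding lemmas, the main point being a uniform lower bound on $t_k$ that comes from Lemma~\ref{lem_tk} once $\|d_k\|_Y$ is controlled linearly by $\|\nabla\Phi(\xi_k)\|_Y$.

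First I would establish that the iterates $(\xi_k)$ stay in a bounded set. The Armijo condition \eqref{eq_alg_armijo} combined with the descent estimate \eqref{eq_alg_descent} gives that $(\Phi(\xi_k))$ is monotonically decreasing, while \eqref{def_Phi} together with the fact that $\Psi$ (or equivalently $G^*$, see Lemma~\ref{lem_diff_conjugate}) is bounded from below shows that the sublevel sets of $\Phi$ are bounded in $Y$ (the quadratic term $\frac12\|\xi-z\|_Y^2$ is coercive). Hence $(\xi_k)$ is bounded, and by the global Lipschitz continuity of $\nabla\Phi$ from Corollary~\ref{cor_DPhi_Lipschitz}, the sequence $\|\nabla\Phi(\xi_k)\|_Y$ is bounded by some constant $R>0$.

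Next I would bound $\|d_k\|_Y$ linearly in $\|\nabla\Phi(\xi_k)\|_Y$. From the inexact Newton condition \eqref{eq_alg_inexact} we get
\[
 \|M_k d_k\|_Y \le (1+\eta\|\nabla\Phi(\xi_k)\|_Y^\tau)\|\nabla\Phi(\xi_k)\|_Y.
\]
Since $\langle M_k d,d\rangle \ge \|d\|_Y^2$ by Corollary~\ref{cor_bound_M}, we have $\|M_kd\|_Y \ge \|d\|_Y$ (by Cauchy--Schwarz), so
\[
 \|d_k\|_Y \le (1+\eta R^\tau)\|\nabla\Phi(\xi_k)\|_Y =: C\|\nabla\Phi(\xi_k)\|_Y.
\]

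Combining this with the descent estimate \eqref{eq_alg_descent} from Corollary~\ref{cor_66} and inserting into Lemma~\ref{lem_tk}, every $k$ with $t_k<1$ satisfies
\[
 t_k \ge \sigma'\frac{|\langle d_k,\nabla\Phi(\xi_k)\rangle|}{\|d_k\|_Y^2}
   \ge \sigma'\frac{(1+c)^{-1}\|\nabla\Phi(\xi_k)\|_Y^2}{C^2\|\nabla\Phi(\xi_k)\|_Y^2}
   = \frac{\sigma'}{(1+c)C^2} =: \tilde c > 0,
\]
so $t_k \ge \min(1,\tilde c)$ for all $k$, which is the claimed $\liminf t_k > 0$. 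Finally, Lemma~\ref{lem_basic} together with this uniform lower bound on $t_k$ forces $\langle d_k,\nabla\Phi(\xi_k)\rangle \to 0$, and using \eqref{eq_alg_descent} once more yields $\|\nabla\Phi(\xi_k)\|_Y \to 0$.

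The only subtle point I see is the boundedness of $(\xi_k)$: without it the Lipschitz estimate from Corollary~\ref{cor_DPhi_Lipschitz} still holds, but the bound $C$ on $\|d_k\|_Y/\|\nabla\Phi(\xi_k)\|_Y$ uses $\|\nabla\Phi(\xi_k)\|_Y \le R$ to absorb the $\eta\|\nabla\Phi(\xi_k)\|^\tau$ factor. Coercivity of $\Phi$ through its quadratic part handles this cleanly, so there is no real obstacle, just bookkeeping.
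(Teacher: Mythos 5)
Your proof is correct, and it reaches the conclusion by a genuinely different route than the paper. You argue directly: coercivity of $\Phi$ (the quadratic part of $F^*$ plus boundedness from below of $G^*$) makes the sublevel sets bounded, hence the monotonically decreasing sequence $(\Phi(\xi_k))$ keeps $(\xi_k)$, and therefore $\|\nabla\Phi(\xi_k)\|_Y$, in a bounded set; this lets you absorb the superlinear term $\eta\|\nabla\Phi(\xi_k)\|_Y^{1+\tau}$ into a constant $C$ with $\|d_k\|_Y\le C\|\nabla\Phi(\xi_k)\|_Y$, and then \cref{lem_tk} together with \eqref{eq_alg_descent} yields an explicit uniform lower bound $t_k\ge\min(1,\tilde c)$ --- slightly more than the stated $\liminf t_k>0$. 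The paper instead argues by contradiction: assuming $\liminf t_k=0$ along a subsequence, it combines \cref{lem_basic} and \cref{lem_tk} to get $\|\nabla\Phi(\xi_k)\|_Y^2/\|d_k\|_Y^2\to0$ \emph{and} $\|\nabla\Phi(\xi_k)\|_Y^4/\|d_k\|_Y^2\to0$, and then divides $\|d_k\|_Y\le\|\nabla\Phi(\xi_k)\|_Y+\eta\|\nabla\Phi(\xi_k)\|_Y^{1+\tau}$ by $\|d_k\|_Y$, handling the exponent $1+\tau$ via the identity $a^{1+\tau}/b=(a^2/b)^\tau(a/b)^{1-\tau}$ to reach $1\le0$. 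The point of that algebraic manoeuvre is precisely to avoid any a priori bound on $\|\nabla\Phi(\xi_k)\|_Y$; your coercivity argument supplies such a bound legitimately (it needs only boundedness of iterates, not strong convergence, so it does not clash with the paper's caveat before \cref{lem_tk}), at the price of one extra structural ingredient. Both proofs rest on the same core lemmas (\cref{lem_basic}, \cref{lem_tk}, \cref{cor_66}, \cref{cor_bound_M} and \eqref{eq_alg_inexact}); yours is more constructive, the paper's more minimal in its hypotheses on the sequence.
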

\begin{proof}
By \cref{lem_basic}, we have $t_k\langle d_k, \nabla \Phi(\xi_k) \rangle \to 0 $.
 If $\liminf_{k\to\infty} t_k >0$ then \eqref{eq_alg_descent} implies $\|\nabla \Phi(\xi_k)\|_Y \to 0$.
 Consider the case $\liminf_{k\to\infty} t_k=0$. By extracting a subsequence, we can assume $t_k \to 0$ and $t_k<1$ for all $k$.
 Then from $t_k \to0$ and $t_k\langle d_k, \nabla \Phi(\xi_k) \rangle \to 0 $ we get with the help of \cref{lem_tk}
 \[
   \frac{  |\langle d_k, \nabla \Phi(\xi_k) \rangle| }{  \|d_k\|_Y^2 } +
   \frac{  |\langle d_k, \nabla \Phi(\xi_k) \rangle|^2 }{  \|d_k\|_Y^2 } \to 0,
 \]
so that  \eqref{eq_alg_descent} implies
 \[
\frac{  \|\nabla \Phi(\xi_k)\|_Y^2 }{  \|d_k\|_Y^2 }  +
\frac{  \|\nabla \Phi(\xi_k)\|_Y^4 }{  \|d_k\|_Y^2 }  \to 0.
 \]
From the inexactness condition \eqref{eq_alg_inexact} and  $\|M_k^{-1}\|_{\L(Y,Y)} \le 1$, we get
\[
\|d_k\|_Y \le \|\nabla \Phi(\xi_k)\|_Y + \eta \|\nabla \Phi(\xi_k)\|_Y^{1+\tau}.
\]
Dividing by $\|d_k\|_Y$ and passing to the limit $k\to\infty$ leads to the contradiction $1\le 0$.
Here, we used $\tau\le1$, and $\frac{a^{1+\tau}}b = \left( \frac{a^2}b\right)^\tau \left( \frac ab\right)^{1-\tau} $.
\end{proof}

% The proof of the previous \cref{lem_convergence_DPhi} established $\liminf_{k\to\infty } t_k>0$, which is not enough to get fast local convergence.

Due to the structure of the problem, we have strong convergence of iterates.

\begin{lemma} \label{lem_weak_convergence}
 Let $(\xi_k)$ be a sequence of iterates of \cref{alg_ssn}.
 Then $\xi_k \to \bar\xi$ in $Y$, where $\xi$ is the unique solution of $\Phi(\xi)=0$.
 In addition, $d_k \to 0$ in $Y$.
\end{lemma}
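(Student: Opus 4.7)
The plan is to deduce both claims directly from results already established, without needing any new subsequence or weak-compactness argument. By \cref{lem_dual}, the dual problem has a unique solution $\bar\xi$, characterized by $\nabla\Phi(\bar\xi)=0$. The strong convergence $\xi_k\to\bar\xi$ will follow by combining the error bound in \cref{lem_error_bound} with the convergence $\nabla\Phi(\xi_k)\to 0$ from \cref{lem_convergence_DPhi}. More precisely, applying \cref{lem_error_bound} with $y=\xi_k$ yields
\[
\|\xi_k-\bar\xi\|_Y \le \|\nabla\Phi(\xi_k)\|_Y,
\]
and the right-hand side tends to zero by \cref{lem_convergence_DPhi}.

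For the second claim, I would estimate $d_k$ directly from the inexact Newton condition \eqref{eq_alg_inexact}. Writing
\[
d_k = -M_k^{-1}\nabla\Phi(\xi_k) + M_k^{-1}\bigl(M_k d_k + \nabla\Phi(\xi_k)\bigr)
\]
and using the bound $\|M_k^{-1}\|_{\L(Y,Y)}\le 1$ from \cref{cor_bound_M} together with \eqref{eq_alg_inexact}, I obtain
\[
\|d_k\|_Y \le \|\nabla\Phi(\xi_k)\|_Y + \eta\,\|\nabla\Phi(\xi_k)\|_Y^{1+\tau}.
\]
Since $\nabla\Phi(\xi_k)\to 0$, the right-hand side converges to zero, which gives $d_k\to 0$ in $Y$. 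This is essentially the same manipulation already used inside the proof of \cref{lem_convergence_DPhi}.

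There is no genuine obstacle here: both statements are immediate corollaries of what has been proved. The only point worth emphasizing is that the error bound in \cref{lem_error_bound} is precisely what allows us to skip the usual sequence of steps (bounded $\Rightarrow$ weakly convergent subsequence $\Rightarrow$ identification of the limit); the strong convexity baked into $\Phi$ via the quadratic term $\tfrac12\|\cdot-z\|_Y^2$ lets us control distance to the unique minimizer directly by the gradient norm, so strong convergence of the whole sequence drops out at once.
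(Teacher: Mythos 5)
Your proof is correct and follows essentially the same route as the paper: the strong convergence is obtained by combining \cref{lem_error_bound} with \cref{lem_convergence_DPhi}, exactly as in the paper's own (very short) proof. You additionally spell out the argument for $d_k\to 0$ via the inexactness bound and $\|M_k^{-1}\|_{\L(Y,Y)}\le 1$, a detail the paper's proof leaves implicit; this is the right justification and matches the estimate already used inside \cref{lem_convergence_DPhi}.
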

\begin{proof}
Let $ \bar\xi$ be such that $\Phi( \bar\xi)=0$. The existence of such $ \bar\xi$ follows from \cref{lem_dual}.
\cref{lem_error_bound} implies $\| \bar\xi - \xi_k\|_Y \le \|\nabla \Phi(\xi_k)\|_Y$, the latter quantity tends to zero by \cref{lem_convergence_DPhi}.
\end{proof}

The next step is to prove that the step size $t_k=1$ is accepted for all $k \ge k_0$. Here, we follow the exposition in \cite{Facchinei1995}.
First, we establish superlinear convergence of the local inexact semismooth Newton method.
% \clearpage

\begin{lemma}\label{lem_small_o}
Let $ \bar\xi$ be the unique solution of $\Phi( \bar\xi)=0$.
Let $(\xi_k)$ be a sequence of iterates of \cref{alg_ssn} with step sizes $(t_k)$.
Let $r_k := M_k d_k + \nabla \Phi(\xi_k)$. Then
\[
 \|r_k\|_Y = o(\|\xi_k -  \bar\xi\|_Y), \qquad \|\xi_k + d_k -  \bar\xi\|_Y= o(\|\xi_k -  \bar\xi\|_Y)
\]
for $k \to \infty$.
\end{lemma}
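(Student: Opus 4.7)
The plan is to chain the inexactness condition, the error bound from \cref{lem_error_bound}, the global Lipschitz continuity of $\nabla\Phi$ from \cref{cor_DPhi_Lipschitz}, and the first-order estimate \eqref{eq_def_semismooth_1} provided by the second-order semismoothness of $\Phi$ (established in \cref{cor_D2Phi}).

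First I would prove the bound on $r_k$. By the inexactness criterion \eqref{eq_alg_inexact} we have $\|r_k\|_Y \le \eta \|\nabla\Phi(\xi_k)\|_Y^{1+\tau}$. Using $\nabla\Phi(\bar\xi)=0$ together with the Lipschitz estimate $\|\nabla\Phi(\xi_k)\|_Y \le L\|\xi_k-\bar\xi\|_Y$ from \cref{cor_DPhi_Lipschitz}, this gives
\[
 \|r_k\|_Y \le \eta L^{1+\tau} \|\xi_k-\bar\xi\|_Y^{1+\tau}.
\]
Since $\tau>0$ and $\xi_k \to \bar\xi$ by \cref{lem_weak_convergence}, the factor $\|\xi_k-\bar\xi\|_Y^\tau$ tends to zero, so $\|r_k\|_Y = o(\|\xi_k-\bar\xi\|_Y)$.

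For the second estimate, I would set $h_k := \xi_k - \bar\xi$ and rewrite the Newton error: from $M_k d_k = r_k - \nabla\Phi(\xi_k)$,
\[
 \xi_k + d_k - \bar\xi = h_k + M_k^{-1}\bigl(r_k - \nabla\Phi(\xi_k)\bigr)
 = M_k^{-1}\Bigl[ r_k - \bigl(\nabla\Phi(\bar\xi+h_k) - \nabla\Phi(\bar\xi) - M_k h_k\bigr)\Bigr],
\]
using $\nabla\Phi(\bar\xi)=0$. Because $M_k \in \partial^2\Phi(\xi_k) = \partial^2\Phi(\bar\xi+h_k)$ and $\Phi$ is second-order semismooth by \cref{cor_D2Phi}, the first-order remainder estimate \eqref{eq_def_semismooth_1} yields
\[
 \|\nabla\Phi(\bar\xi+h_k) - \nabla\Phi(\bar\xi) - M_k h_k\|_Y = o(\|h_k\|_Y).
\]
Combining with the bound $\|M_k^{-1}\|_{\L(Y,Y)}\le 1$ from \cref{cor_bound_M} and the first step's estimate $\|r_k\|_Y = o(\|h_k\|_Y)$, we obtain $\|\xi_k + d_k - \bar\xi\|_Y = o(\|\xi_k-\bar\xi\|_Y)$.

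I do not anticipate a genuine obstacle here; the only point that requires some care is that the semismoothness estimate is stated as a supremum over $\partial^2\Phi(\bar\xi+h_k)$ and our particular $M_k$ sits in that set, so the bound applies to it directly. Everything else is a routine rearrangement exploiting the invertibility and norm bound of $M_k$ provided by \cref{cor_bound_M}.
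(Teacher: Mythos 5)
Your proposal is correct and follows essentially the same route as the paper: the inexactness bound combined with the Lipschitz estimate $\|\nabla\Phi(\xi_k)\|_Y\le L\|\xi_k-\bar\xi\|_Y$ gives the first claim, and rewriting $\xi_k+d_k-\bar\xi = M_k^{-1}\bigl(M_k(\xi_k-\bar\xi)-\nabla\Phi(\xi_k)+r_k\bigr)$ together with the semismoothness remainder \eqref{eq_def_semismooth_1} and $\|M_k^{-1}\|_{\L(Y,Y)}\le 1$ gives the second. Your explicit remark that $M_k$ lies in $\partial^2\Phi(\bar\xi+h_k)$, so the supremum-form remainder estimate applies to it, is exactly the point the paper uses implicitly.
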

\begin{proof}
Using the inexact Newton equation \eqref{eq_alg_inexact} and \cref{cor_DPhi_Lipschitz}, we get
\[
 \|r_k\|_Y \le \eta\|\nabla \Phi(\xi_k)\|_Y^{1+\tau} \le \eta L \| \xi_k-\bar\xi\|_Y^{1+\tau} = o(\|\xi_k - \bar\xi\|_Y).
\]
Using again \eqref{eq_alg_inexact}, we find
\[\begin{split}
 \|\xi_k + d_k - \bar\xi\|_Y &= \|M_k^{-1}( M_k(\xi_k - \bar\xi) - \nabla \Phi(\xi_k) + r_k)\|_Y\\
 &\le \|M_k^{-1}\|_{\L(Y,Y)} \|M_k(\xi_k - \bar\xi) - \nabla \Phi(\xi_k)\|_Y + \|r_k\|_Y\\
 & = o(\|\xi_k - \bar\xi\|_Y),
 \end{split}
\]
where we have used \cref{cor_bound_M} and the semismoothness of $\nabla \Phi$.
\end{proof}

Now, we will prove that the step size $t_k=1$ will be accepted for large $k$. This proof crucially relies on the second-order
expansion in the definition of second-order semismoothness (\cref{def_second_order_semismooth}).

\begin{lemma} \label{lem_step}
Let $\bar\xi$ be the unique solution of $\Phi(\bar\xi)=0$.
 Let $(\xi_k)$ be a sequence of iterates of \cref{alg_ssn} with step sizes $(t_k)$.
 Then there is $k_0>0$ such that $t_k =1$ for all $k \ge k_0$.
\end{lemma}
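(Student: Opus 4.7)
The plan is to show that the Armijo inequality \eqref{eq_alg_armijo} is satisfied with $l_k=0$ for all large $k$, by performing a second-order expansion of $\Phi$ around the limit $\bar\xi$ rather than around the current iterate $\xi_k$. Write $e_k := \xi_k - \bar\xi$. Since $\nabla\Phi(\bar\xi)=0$, applying the second-order expansion \eqref{eq_def_semismooth_2} with basepoint $\bar\xi$ and increment $e_k$ yields, using our specific $M_k \in \partial^2\Phi(\xi_k)$ appearing as one element of $\partial^2\Phi(\bar\xi+e_k)$,
\[
\Phi(\xi_k)-\Phi(\bar\xi) = \tfrac12\langle M_k e_k,e_k\rangle + o(\|e_k\|_Y^2).
\]
Applying the same expansion with basepoint $\bar\xi$ and increment $\xi_k+d_k-\bar\xi$, and using the uniform bound $\|M\|_{\L(Y,Y)}\le 1+c$ from \cref{cor_bound_M}, gives
\[
\Phi(\xi_k+d_k)-\Phi(\bar\xi) = O(\|\xi_k+d_k-\bar\xi\|_Y^2) = o(\|e_k\|_Y^2),
\]
where the second equality uses the superlinear estimate $\|\xi_k+d_k-\bar\xi\|_Y = o(\|e_k\|_Y)$ from \cref{lem_small_o}.

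Next, I would handle the directional-derivative term $\langle\nabla\Phi(\xi_k),d_k\rangle$. The first-order semismooth estimate \eqref{eq_def_semismooth_1} with basepoint $\bar\xi$ gives
\[
\nabla\Phi(\xi_k) = M_k e_k + o(\|e_k\|_Y),
\]
while \cref{lem_small_o} (rewritten as $d_k = -e_k + o(\|e_k\|_Y)$) combined with $\|M_k\|_{\L(Y,Y)}\le 1+c$ yields
\[
\langle\nabla\Phi(\xi_k),d_k\rangle = -\langle M_k e_k,e_k\rangle + o(\|e_k\|_Y^2).
\]

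Subtracting the expansions of $\Phi(\xi_k)$ and $\Phi(\xi_k+d_k)$ and combining with the above, the Armijo residual at step size $1$ becomes
\[
\Phi(\xi_k+d_k)-\Phi(\xi_k) - \sigma\langle\nabla\Phi(\xi_k),d_k\rangle = \Bigl(\sigma-\tfrac12\Bigr)\langle M_k e_k,e_k\rangle + o(\|e_k\|_Y^2).
\]
The coercivity $\langle M_k e_k,e_k\rangle \ge \|e_k\|_Y^2$ from \cref{cor_bound_M}, together with $\sigma\in(0,1/2)$, bounds the leading term by $(\sigma-1/2)\|e_k\|_Y^2<0$. Since the algorithm has not terminated we have $\nabla\Phi(\xi_k)\neq 0$, so by uniqueness of $\bar\xi$ also $e_k\neq 0$; hence for all sufficiently large $k$ the right-hand side is non-positive, so $t_k=1$ satisfies \eqref{eq_alg_armijo}.

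The main subtlety, which I expect to be the most delicate point, is that Definition \ref{def_second_order_semismooth} supplies remainder estimates $o(\|h\|^2)$ uniformly in $M\in\partial^2\Phi(x+h)$ \emph{at the shifted point} $x+h$. By anchoring both expansions at $\bar\xi$, the set $\partial^2\Phi(\xi_k)$ appears naturally on the right of the sup, so the element $M_k$ already chosen by the algorithm is legitimately covered, and the same $M_k$ can be used consistently on both sides without invoking any continuity of $\partial^2\Phi$ across iterates.
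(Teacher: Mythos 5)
Your proof is correct and follows essentially the same route as the paper's: both anchor the second-order expansions \eqref{eq_def_semismooth_2} at $\bar\xi$ so that $M_k\in\partial^2\Phi(\xi_k)$ and $H_k\in\partial^2\Phi(\xi_k+d_k)$ are legitimately covered by the supremum in the definition, invoke \cref{lem_small_o} to write $d_k=-(\xi_k-\bar\xi)+o(\|\xi_k-\bar\xi\|_Y)$, and reduce the Armijo residual at step size $1$ to $\left(\sigma-\tfrac12\right)\langle M_k(\xi_k-\bar\xi),\xi_k-\bar\xi\rangle+o(\|\xi_k-\bar\xi\|_Y^2)$, which is negative for all large $k$ by the coercivity $\langle M_k e_k,e_k\rangle\ge\|e_k\|_Y^2$ and $\sigma<1/2$. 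The only cosmetic difference is that the paper obtains the estimate $\langle d_k,\nabla\Phi(\xi_k)\rangle\le-\|\xi_k-\bar\xi\|_Y^2+o(\|\xi_k-\bar\xi\|_Y^2)$ directly from the inexact Newton equation \eqref{eq_alg_inexact} rather than from the first-order semismoothness estimate at $\bar\xi$ as you do.
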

\begin{proof}
See also \cite[Theorem 3.3]{Facchinei1995}.
Using the simple inequalities
\[
  \|\xi_k - \bar\xi\|_Y \le \|d_k\|_Y +  \|\xi_k + d_k - \bar\xi\|_Y \le \|\xi_k - \bar\xi\|_Y + 2\|\xi_k + d_k - \bar\xi\|_Y,
\]
we get by \cref{lem_small_o}
\[
 \|d_k\|_Y =   \|\xi_k - \bar\xi\|_Y +o(\|\xi_k - \bar\xi\|_Y).
\]
Similarly, we get from \eqref{eq_alg_inexact} an estimate of the directional derivative
\[\begin{split}
 \langle d_k, \nabla \Phi(\xi_k) \rangle & \le -\langle M_kd_k,d_k\rangle +\|r_k\|_Y \|d_k\|_Y \\
 &\le - \|d_k\|_Y^2 +o(\|\xi_k - \bar\xi\|_Y^2)\\
 &= - \|\xi_k - \bar\xi\|_Y^2 +o(\|\xi_k - \bar\xi\|_Y^2).
\end{split}\]
Since $\xi_k+d_k \to \bar\xi$ in $Y$, we have the second-order expansion
\[ \begin{aligned}
\Phi(\xi_k+d_k) - \Phi(\bar\xi) -\langle\nabla \Phi(\bar\xi), \xi_k + d_k - \bar\xi\rangle
 & = - \frac12 \langle H_k( \xi_k + d_k - \bar\xi), \xi_k + d_k - \bar\xi\rangle + o(\|\xi_k + d_k - \bar\xi\|_Y^2) \\
& = o(\|\xi_k - \bar\xi\|_Y^2),
\end{aligned}\]
where $H_k \in \partial^2\Phi(\bar\xi_k+d_k)$, which is bounded due to \cref{cor_D2Phi}.
Similarly, we prove using $\nabla \Phi(\bar\xi)=0$
\[
 \Phi(\xi_k) - \Phi(\bar\xi) = - \frac12 \langle M_k( \xi_k   - \bar\xi), \xi_k   - \bar\xi\rangle + o(\|\xi_k - \bar\xi\|_Y^2).
\]
Collecting all these estimates yields
\begin{multline*}
 \Phi(\xi_k+d_k) - \Phi(\xi_k) -\frac12 \langle \nabla \Phi(\xi_k),d_k\rangle \\
 \begin{aligned}
  & =\frac12 \langle M_k( \xi_k   - \bar\xi), \xi_k  - \bar\xi\rangle -\frac12\langle \nabla \Phi(\xi_k),d_k\rangle + o(\|\xi_k - \bar\xi\|_Y^2) \\
  & =\frac12 \langle M_k( \xi_k   - \bar\xi), \xi_k +d_k  - \bar\xi-d_k\rangle -\frac12\langle \nabla \Phi(\xi_k),d_k\rangle + o(\|\xi_k - \bar\xi\|_Y^2) \\
  & =-\frac12 \langle M_k( \xi_k   - \bar\xi)+\nabla \Phi(\xi_k) - \nabla \Phi(\bar\xi),  d_k\rangle  + o(\|\xi_k - \bar\xi\|_Y^2) \\
  & = o(\|\xi_k - \bar\xi\|_Y^2).
 \end{aligned}
\end{multline*}
Using this in the descent condition \eqref{eq_alg_armijo}, we get
\[\begin{aligned}
      \Phi(\xi_k +   d_k) - \Phi(\xi_k) - \sigma   \langle d_k, \nabla \Phi(\xi_k) \rangle
&
      =\left( \frac12 - \sigma\right) \langle d_k, \nabla \Phi(\xi_k) \rangle + o(\|\xi_k - \bar\xi\|_Y^2) \\
 &     = - \left( \frac12 - \sigma\right) \|\xi_k - \bar\xi\|_Y^2 + o(\|\xi_k - \bar\xi\|_Y^2) .
\end{aligned}\]
The right-hand side is less than zero for $k$ large enough, which proves the claim.
\end{proof}

\begin{theorem} \label{thm_superlinear_convergence}
 Let \cref{ass_semismooth} be satisfied.
Let $\bar\xi$ be the unique solution of $\nabla\Phi(\bar\xi)=0$.
 Let $(\xi_k)$ be a sequence of iterates of \cref{alg_ssn}.
 Then $\xi_k \to \bar\xi$ q-superlinearly.
\end{theorem}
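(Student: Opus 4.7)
The plan is to simply stitch together the local superlinear estimate from \cref{lem_small_o} with the full-step result of \cref{lem_step}. All the substantive work has already been done: global strong convergence $\xi_k \to \bar\xi$ is granted by \cref{lem_weak_convergence}, and \cref{lem_step} tells us that there is some $k_0$ from which on the Armijo line search accepts $t_k = 1$ unconditionally. In that regime the globalized iteration coincides with the pure inexact semismooth Newton step.

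Concretely, I would proceed as follows. First, invoke \cref{lem_weak_convergence} to obtain $\xi_k \to \bar\xi$ in $Y$, which is the qualitative convergence assertion. Next, invoke \cref{lem_step} to fix a $k_0 \in \N$ such that $t_k = 1$ for all $k \ge k_0$; by the update rule in \cref{alg_ssn} this yields $\xi_{k+1} = \xi_k + d_k$ for every $k \ge k_0$. Then apply the second estimate of \cref{lem_small_o}, namely
\[
 \|\xi_k + d_k - \bar\xi\|_Y = o(\|\xi_k - \bar\xi\|_Y) \quad \text{as } k \to \infty,
\]
to conclude
\[
 \|\xi_{k+1} - \bar\xi\|_Y = o(\|\xi_k - \bar\xi\|_Y),
\]
which is the definition of q-superlinear convergence.

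There is essentially no obstacle in this final step: the acceptance of the unit step size and the Newton-type contraction estimate have each been established separately, and the theorem is precisely the observation that they combine. The only point worth stating explicitly is that the $o(\cdot)$ from \cref{lem_small_o} is well-defined even in the (uninteresting) case $\xi_k = \bar\xi$ for some $k$, in which case \cref{alg_ssn} would in fact terminate via its stopping criterion, so one may tacitly assume $\xi_k \neq \bar\xi$ for all $k$. Consequently the proof can be kept to a few lines, simply citing \cref{lem_weak_convergence}, \cref{lem_step}, and \cref{lem_small_o} in sequence.
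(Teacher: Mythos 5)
Your proposal is correct and follows exactly the paper's own argument: apply \cref{lem_step} to get $t_k=1$ for $k\ge k_0$, so $\xi_{k+1}=\xi_k+d_k$, and then the second estimate of \cref{lem_small_o} gives $\|\xi_{k+1}-\bar\xi\|_Y=o(\|\xi_k-\bar\xi\|_Y)$. The extra remark about the degenerate case $\xi_k=\bar\xi$ is a harmless clarification not present in the paper.
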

\begin{proof}
 Due to \cref{lem_step}, there is   $k_0>0$ such that $t_k =1$ for all $k \ge k_0$.
 Then \cref{lem_small_o} implies $ \|\xi_k + d_k - \bar\xi\|_Y = \|\xi_{k+1}-\bar\xi\|_Y= o(\|\xi_k - \bar\xi\|_Y)\|$ for $k\ge k_0$,
 which is the claim.
\end{proof}

Let us conclude this section with several remarks.

% \clearpage

\begin{remark}
 The proof of transition to fast convergence in \cite[Lemma 4.4]{HinzeVierling2012oms} relied on stronger assumptions.
 There a condition on the measure of almost active sets was used, which implies strict complementarity.
 In addition, the operator $S^*$ has to be continuous with values in $C^1(\bar\Omega)$.
 Under these conditions $\partial^2 \Psi$ is single-valued and H\"older continuous in a neighborhood of the solution \cite[Lemma 3.4]{HinzeVierling2012oms},
 which allows to prove the higher-order convergence rate $\|\xi_{k+1} - \bar\xi\|_Y \le c \|\xi_k-\bar\xi\|_Y^{3/2}$.
\end{remark}

\begin{remark}[Newton equation in primal form]
\label{rem_ssn_primal}
The element $\bar\xi \in Y$ solves
\[
  0 = \nabla \Phi(\bar\xi) = \bar\xi - z + S\prox_{g/\alpha}(S^*\bar\xi/\alpha)
\]
if and only if $\bar u = \prox_{g/\alpha}(S^*\bar\xi/\alpha)$  solves
\begin{equation}\label{eq_noc_u}
 0 = \bar u - \prox_{g/\alpha}(-S^*(S\bar u-z)/\alpha) .
\end{equation}
Conversely, if $\bar u$ is a solution of \eqref{eq_noc_u} then $\bar\xi = z- S\bar u$ satisfies $\nabla \Phi(\bar\xi)=0$.
Let $u_k\in U$ be given. Applying the semismooth Newton method to \eqref{eq_noc_u} amounts to choosing $H_k \in \partial \prox_{g/\alpha}(-S^*(Su_k-z)/\alpha) $
and solving
\begin{equation}\label{eq_ssn_u}
 \delta u + \frac 1\alpha H_k S^*S \delta u = - \left( u_k - \prox_{g/\alpha}(-S^*(Su_k-z)/\alpha) \right).
\end{equation}
% Note that the operator $H_k S^*S$ is not self-adjoint. If $g$ in an integral operator then one can expect that $H_k$ is a multiplication operator.
% If $H_k^2=H_k$ then equation \eqref{eq_ssn_u} can be transformed solved for $(I-H_k)\delta u$ first.
% The resulting system in $H_k\delta u$ is a linear system with self-adjoint operator.
%
Let us set $\xi_k := z- Su_k$. If $\delta u$ solves \eqref{eq_ssn_u} then $\delta \xi:= -S \delta u$ solves
\[
 \delta u - \frac 1\alpha H_k S^* \delta \xi = - \left( u_k - \prox_{g/\alpha}(S^*\xi_k/\alpha) \right).
\]
Multiplying by $-S$, we get
\[
 \delta \xi + \frac 1\alpha S H_k S^* \delta \xi = - \left( \xi_k -z + S \prox_{g/\alpha}(S^*\xi_k/\alpha) \right),
\]
which is the Newton equation \eqref{eq_ssn_xi} with $M_k := S^*H_kS$.
% The converse implication is only true if $S$ is invertible.
% Note that $\widetilde{\delta u} := \frac 1\alpha  H_k S^* \delta \xi$ does not solve \eqref{eq_ssn_u} in general.
% Hence, requiring solvability of \eqref{eq_ssn_u} is a stronger requirement than requiring solvability of \eqref{eq_ssn_xi}.
Equation \eqref{eq_ssn_u} is an equation with an unsymmetric operator. If $H_k$ is suitably structured, one
can transform the equation into an equation with a symmetric operator.
\end{remark}

\begin{remark}[Semismooth Newton applied to normal map]
\label{rem_ssn_normaleq}
It is easy to show that $\bar u = \prox_{g/\alpha}( - \bar p/\alpha)$ is a solution of \eqref{eq001} if and only if $\bar p\in U$ solves
\[
 0 = \bar p - S^*(S \prox_{g/\alpha}( - \bar p/\alpha) - z).
\]
The map $p \mapsto p - S^*(S \prox_{g/\alpha}( - p/\alpha) - z)$ is called normal map after \cite{Robinson1992}.
Semismooth Newton methods based on this equation are considered in \cite{PieperDiss,HinzeVierling_2012spp}.
In the recent article \cite{OuyangMilzarek2024}, the convergence analysis of a trust-region globalization of a semismooth Newton method applied to the normal map equation is
developed in finite dimensions, where globalization is based on the residual in the normal equation.
\end{remark}

\begin{remark}In \cite{Graser2008} a globalization of the active-set strategy \cite{BergouniouxItoKunisch1999} to solve discretized box constrained optimal control problems
was proposed. However, the analysis does not carry over to the infinite-dimensional setting, as certain operators are not invertible in infinite dimensions \cite[Remark 3]{Graser2008}.
\end{remark}

\begin{remark}[Non-quadratic functional]
Let us briefly consider the following generalization of \eqref{eq001}:
 \[
  \min_{u\in X} F(Su) + \frac\alpha2 \|u\|_U^2 + g(u),
 \]
where $F : Y \to \R$ is convex. Let $F^*$ denote the convex conjugate of $F$.
Under the assumption that $F$ is strongly convex and twice continuously differentiable, we get that $F^*$ is also strongly convex and twice continuously differentiable.
Let us denote with $\nabla F^* := \nabla ( F^*)$ its gradient.
Then, we have $\nabla F( \nabla F^*(y)) = y$ for all $y\in Y$ by \cite[Theorem 16.29]{BauschkeCombettes2017}.
Differentiating this identity yields
$%\[
 \nabla^2 F(  \nabla F^*(y)) \nabla^2 F^*(y)) = \operatorname{id}
$ %\]
or $\nabla^2 F^*(y) = \nabla^2 F(  \nabla F^*(y)) ^{-1}$.
%
%
% Since
% \[
%  x = (\nabla F^*)(y)
%  \quad \Leftrightarrow \quad
%  y = \nabla F(x)
%  \quad \Leftrightarrow \quad
%  x =  (\nabla F)^{-1}(y),
% \]
%
% we get by  $\nabla^2 F^*(y) = \nabla^2 F( \, \nabla F^{-1}(y) \,)^{-1}$.
The resulting second-order derivative of $\Phi$ is given by $\partial^2 \Phi = \nabla^2 F^* + \partial^2\Psi$.
Then all the convergence analysis for the corresponding semismooth Newton algorithm remains true, only the constants in \cref{lem_error_bound} and \cref{cor_bound_M}
have to be adapted.
\end{remark}

% \clearpage

\section{Semismoothness of Nemyzki operators}
\label{sec_nemyzki}

Let $(\Omega,\mathcal A,\mu)$ be a measure space.
In many optimization problems, the function $g$ in \eqref{eq001} is defined as
\begin{equation}\label{eq_def_g}
g(u)= \int_\Omega \tilde g(u(x))\d\mu
\end{equation}
with a convex function $\tilde g:\R\to\bar\R$.
Here, we follow the convention to set $g(u):=+\infty$ if $x \mapsto \tilde g(u(x))$ is not integrable.
The resulting proximal operator is then a superposition operator on $U = L^2(\Omega)$.

\begin{corollary}
 Let $\tilde g: \R\to\bar\R$  be proper, convex, and lower semicontinuous. Let $g$ be defined in \eqref{eq_def_g}.
 Suppose there is $u \in L^2(\Omega)$ with $g(u) < +\infty$.
 Then $\prox_g : L^2(\Omega) \to L^2(\Omega)$ is given by
 \[
  \prox_g( v) (x) = \prox_{\tilde g}( v(x) ) \quad \text{ for $\mu$-a.a. } x\in \Omega.
 \]
\end{corollary}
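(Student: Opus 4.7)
The plan is to verify the claim by appealing to the pointwise characterization of the proximal operator from \cref{prop_prox_characterization}. Define the candidate $w : \Omega \to \R$ by $w(x) := \prox_{\tilde g}(v(x))$ and argue $w = \prox_g(v)$. Since Proposition~\ref{prop_prox_characterization} applied to $\tilde g$ gives $0 \in w(x) - v(x) + \partial \tilde g(w(x))$ a.e., the subgradient inequality
\[
\tilde g(y) \ge \tilde g(w(x)) + (v(x) - w(x))(y - w(x))
\]
holds for every $y \in \R$ and $\mu$-a.e.\ $x$. Inserting $y = u(x)$ for an arbitrary $u \in L^2(\Omega)$ and integrating in $x$ would then give $g(u) \ge g(w) + \langle v - w, u - w\rangle_{L^2}$, i.e.\ $v - w \in \partial g(w)$, and a second application of \cref{prop_prox_characterization} finishes the proof.

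Before the integration step can be performed, I would need to establish three preliminary facts. First, that $g$ itself is proper, convex and lower semicontinuous on $L^2(\Omega)$, so that $\prox_g(v)$ is well-defined: properness is given, convexity follows pointwise, and lower semicontinuity is obtained from Fatou's lemma applied after subtracting a global affine minorant $\tilde g(y) \ge ay + b$ (whose existence follows from properness and lower semicontinuity of $\tilde g$). Second, that $w$ is measurable: by \cref{prop_prox_lipschitz}, $\prox_{\tilde g}:\R\to\R$ is $1$-Lipschitz, hence continuous, so $w$ is measurable as a composition. Third, that $w \in L^2(\Omega)$ and $g(w) < \infty$, which is the key technical step discussed next.

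To obtain $w \in L^2(\Omega)$, I would use the pointwise optimality of $w(x)$ against a reference point $u_0 \in L^2(\Omega)$ with $g(u_0) < \infty$ (which in particular implies $u_0(x) \in \dom\tilde g$ for $\mu$-a.e.\ $x$), yielding
\[
\tfrac12 (w(x) - v(x))^2 + \tilde g(w(x)) \le \tfrac12 (u_0(x) - v(x))^2 + \tilde g(u_0(x)).
\]
Combining this with the affine minorant to eliminate the term $\tilde g(w(x))$ on the left, and then using Young's inequality to absorb the resulting linear expression in $w(x)$ into $\tfrac12(w-v)^2$, dominates $(w-v)^2$ pointwise by a function in $L^1(\Omega)$ built from $(u_0 - v)^2$, $\tilde g(u_0)$, and $v^2$; hence $w - v \in L^2$ and so $w \in L^2$. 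The same pointwise inequality provides an $L^1$-upper bound for $\tilde g(w)^+$, while the affine minorant (sharpened, if needed, to the subgradient inequality at $u_0(x)$ where $\partial \tilde g(u_0(x))\ne\emptyset$ a.e.) bounds $\tilde g(w)^-$ by an integrable function, yielding $g(w) < \infty$.

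The main obstacle is precisely the integrability bookkeeping for $w \in L^2$ and $g(w) \in \R$ when $\mu(\Omega)$ may be infinite: the naive affine minorant $ay + b$ leaves linear-in-$v$ terms that need not be integrable, and the remedy is the Young-type absorption described above, possibly combined with a choice of subgradient based at $u_0(x)$ to tighten the minorant. Once these integrability issues are settled, the subgradient inequality integrates immediately and the characterization from \cref{prop_prox_characterization} closes the argument.
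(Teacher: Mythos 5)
Your strategy is the reverse of the paper's. The paper's proof is essentially a citation: it invokes Rockafellar's characterization $\partial g(u) = \{ d \in L^2(\Omega):\ d(x) \in \partial \tilde g(u(x))\ \text{a.e.}\}$ for the integral functional \eqref{eq_def_g}, applies \cref{prop_prox_characterization} to the \emph{abstract} proximal point $\prox_g(v)$ (which exists in $L^2(\Omega)$ because $g$ is proper, convex, lsc), and reads off the pointwise formula. In that direction the $L^2$-membership of $x \mapsto \prox_{\tilde g}(v(x))$ comes for free, since it is a.e.\ equal to $\prox_g(v)$. You instead define the candidate $w(x) = \prox_{\tilde g}(v(x))$ pointwise and try to verify $v - w \in \partial g(w)$ by integrating the scalar subgradient inequality; this forces you to prove $w \in L^2(\Omega)$ and $g(w) < \infty$ \emph{before} you can conclude anything, and that is where your argument has a genuine gap.

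The corollary is stated for a general measure space; $\mu(\Omega) < \infty$ is only assumed later in the paper. When $\mu(\Omega) = \infty$, your integrability remedies do not close: the affine minorant $\tilde g(y) \ge ay + b$ produces, after the Young absorption, constant terms (such as $a^2$ and $-b$) that are not integrable over $\Omega$; and the ``sharpened'' minorant based at $u_0(x)$ presupposes both that $\partial \tilde g(u_0(x)) \ne \emptyset$ a.e.\ (false in general --- $g(u_0)<\infty$ only gives $u_0(x) \in \dom\tilde g$ a.e., and the subdifferential can be empty on a positive-measure set where $u_0(x)$ hits the boundary of $\dom\tilde g$, e.g.\ $\tilde g(t)=-\sqrt{t}$ on $[0,\infty)$ at $t=0$) and that a measurable selection of subgradients lies in $L^2(\Omega)$, which is also not guaranteed. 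Under the additional hypothesis $\mu(\Omega)<\infty$ your bookkeeping does go through (the constants are then integrable and Young's inequality absorbs the linear term), and the rest of your argument --- measurability via Lipschitz continuity of $\prox_{\tilde g}$, integration of the subgradient inequality, closing with \cref{prop_prox_characterization} --- is correct. For the general case you should either argue in the paper's direction from $\prox_g(v) \in L^2(\Omega)$ downward, or import the subdifferential characterization for integral functionals rather than re-deriving it.
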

\begin{proof}
 This follows from the results of \cite[Section 3]{Rockafellar1976},
 in particular the characterization of $\partial g(u)$ as
 \[
  \partial g(u) = \{ d \in L^2(\Omega): \ d(x) \in \partial \tilde g(u(x)) \text{ for $\mu$-almost all } x\in \Omega\},
 \]
see \cite[Corollary 3E]{Rockafellar1976}.
\end{proof}

\subsection{Piecewise smooth proximals}

We will now work with piecewise smooth proximal operators.

\begin{assumption}\label{ass_prox_piecewise}
The function  $\prox_{\tilde g/\alpha}: \R \to \R$ is piecewise smooth in the following sense: there are real numbers $v_1 < v_2 < \dots < v_N$ and continuously
differentiable functions $p_k:\R\to\R$, $k=0\dots N$,
such that
\[
 \prox_{\tilde g/\alpha} = p_k \text{ on } [v_k,v_{k+1}],
\]
where we set $v_0=-\infty$, $v_{N+1}:=+\infty$.
\end{assumption}

\begin{lemma}\label{lem_prox_2nd_order}
Let $\prox_{\tilde g/\alpha}$ satisfy \cref{ass_prox_piecewise}. Define
the single-valued map
\[
 \partial \prox_{\tilde g/\alpha} (v) :=
  p_k'(v)   \text{ if }  v\in [v_k,v_{k+1}) \text{ for some } k =0\dots N.
\]
Then for all $v\in \R$, we have
\[
%  \lim_{h\to0}  \frac{
 \left|\prox_{\tilde g/\alpha}(v+h) - \prox_{\tilde g/\alpha}(v)  -  \partial\prox_{\tilde g/\alpha}(v+h)h \right| = o(|h|)
%  }{|h|}=0
\]
and
\[
%   \lim_{h\to0}\frac{
|\prox_{\tilde g/\alpha}(v+h) - \prox_{\tilde g/\alpha}(v) - \prox_{\tilde g/\alpha}'(v;h)|= o(|h|)
%}{|h|}=0
.
\]
\end{lemma}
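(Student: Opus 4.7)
The plan is to fix an arbitrary $v \in \R$ and split the analysis into two cases, depending on whether $v$ lies in the interior of some piece $(v_k,v_{k+1})$ or coincides with a breakpoint $v = v_k$. Before that, I invoke the Lipschitz continuity of $\prox_{\tilde g/\alpha}$ from \cref{prop_prox_lipschitz}: since the piecewise representation yields a well-defined function, the Lipschitz (and hence continuous) character forces the compatibility conditions $p_{k-1}(v_k) = p_k(v_k)$ at every breakpoint. This is the key glue that allows me to match expansions taken from adjacent pieces.

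In the interior case, for $|h|$ small enough both $v$ and $v+h$ lie in the same interval $[v_k,v_{k+1})$, so $\prox_{\tilde g/\alpha}(v+h)=p_k(v+h)$, $\prox_{\tilde g/\alpha}(v)=p_k(v)$, and $\partial \prox_{\tilde g/\alpha}(v+h) = p_k'(v+h)$. Both displayed estimates then reduce to standard first-order Taylor remainders for the $C^1$ function $p_k$. The directional derivative is $\prox_{\tilde g/\alpha}'(v;h) = p_k'(v) h$, with the corresponding $o(|h|)$-bound coming from differentiability of $p_k$ at $v$.

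In the breakpoint case $v = v_k$, I treat $h > 0$ and $h < 0$ separately. For small $h > 0$ the argument $v+h$ lies in $[v_k,v_{k+1})$, and the first estimate becomes $|p_k(v_k+h) - p_k(v_k) - p_k'(v_k+h)h| = o(|h|)$, which is standard Taylor for $p_k$. For small $h < 0$ the argument $v+h$ lies in $[v_{k-1},v_k)$; using the compatibility $\prox_{\tilde g/\alpha}(v_k) = p_k(v_k) = p_{k-1}(v_k)$ to rewrite $\prox_{\tilde g/\alpha}(v_k) = p_{k-1}(v_k)$, the estimate reduces to the Taylor remainder of $p_{k-1}$ at $v_k+h$. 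The directional derivative at $v_k$ is obtained by passing to one-sided limits of the difference quotient on each adjacent piece, giving $\prox_{\tilde g/\alpha}'(v_k;h) = p_k'(v_k)\,h$ for $h>0$ and $p_{k-1}'(v_k)\,h$ for $h<0$, with the $o(|h|)$-remainder again supplied by classical Taylor expansion on the appropriate piece.

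The only delicate point I anticipate is to respect the half-open convention $[v_k,v_{k+1})$ in the definition of $\partial \prox_{\tilde g/\alpha}$: at a breakpoint $v+h = v_k$ (which can occur in the first estimate when $h < 0$ and $v = v_k + |h|$ lies just to the right), one must pick the derivative value $p_k'(v_k)$ consistent with this convention. This is a bookkeeping issue rather than a genuine obstacle, and once the cases are separated by $\mathrm{sign}(h)$ as above, each sub-estimate is an immediate consequence of differentiability of the relevant $p_k$ and the breakpoint compatibility, so no further machinery is needed.
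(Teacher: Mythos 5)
Your proposal is correct and follows essentially the same route as the paper, which simply observes that for small $|h|$ the map $\prox_{\tilde g/\alpha}$ coincides with a single $C^1$ piece on the interval between $v$ and $v+h$ and then invokes the standard Taylor remainder; your case analysis (interior point versus breakpoint, sign of $h$, and the compatibility $p_{k-1}(v_k)=p_k(v_k)$ forced by continuity) is just a fully written-out version of that one-line argument. The bookkeeping remark about the half-open convention in the definition of $\partial\prox_{\tilde g/\alpha}$ is a correct and harmless refinement.
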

\begin{proof}
 Take $v\in \R$, and $h$ such that $\prox_{\tilde g/\alpha}$ is continuously differentiable between $v$ and $v+h$.
 Then both claims follow easily.
\end{proof}

Clearly, this result can be generalized for $\prox_{\tilde g/\alpha}: \R^n \to \R^n$ with infinitely many pieces,
see also \cite[Section 2.5.3]{Ulbrich2011}.

% \clearpage

In the next step, we prove that the derivative of the proximal operator of the integral functional $g$
is semismooth from $L^q(\Omega)$ to $L^p(\Omega)$ for $p<q$.
The proof of the following result is adapted from the proof of \cite[Proposition 6]{PotzlSchielaJaap2022},
more general results can be found in \cite[Section 3.3]{Ulbrich2011}.

\begin{lemma} \label{lem_prox_2nd_order_LpLq}
Let $1\le p<q\le\infty$ be given. Let $\mu(\Omega)<\infty$.
Let $\prox_{\tilde g/\alpha}$ satisfy \cref{ass_prox_piecewise}.
Let $v \in L^q(\Omega)$.
Define the measurable function $\partial \prox_{g/\alpha}(v)$ for $\mu$-almost all $x\in \Omega$ by
\[
 \partial \prox_{g/\alpha} (v)(x) :=  \partial \prox_{\tilde g/\alpha}(v(x)).
\]
Then we have $\partial \prox_{g/\alpha} (v) \in L^\infty(\Omega)$,
\[
  \left\|\prox_{g/\alpha}(v+h) - \prox_{g/\alpha}(v)  -  \partial \prox_{g/\alpha} (v+h) \cdot h \right\|_{L^p(\Omega)} = o(\|h\|_{L^q(\Omega)})
\]
and
\[
  \|\prox_{g/\alpha}(v+h) - \prox_{g/\alpha}(v) - \prox_{g/\alpha}'(v;h)\|_{L^p(\Omega)} = o(\|h\|_{L^q(\Omega)}).
\]
\end{lemma}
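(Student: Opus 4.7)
The plan is to lift the pointwise scalar estimates of \cref{lem_prox_2nd_order} to the $L^p/L^q$ setting by a contradiction-plus-subsequence argument combined with a H\"older split, which is the standard way to pass from almost-everywhere small-$o$ estimates to a norm-level small-$o$ estimate. I treat the two claims in parallel since they are structurally identical.

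\textbf{Step 1 (boundedness of $\partial \prox_{g/\alpha}(v)$).} Because $\prox_{\tilde g/\alpha}\colon\R\to\R$ is nonexpansive by \cref{prop_prox_lipschitz}, each smooth piece $p_k$ satisfies $|p_k'|\le 1$ on its interval. Hence the scalar selection $\partial\prox_{\tilde g/\alpha}$ takes values in $[0,1]$, and so its Nemyzki counterpart $\partial\prox_{g/\alpha}(v)(x):=\partial\prox_{\tilde g/\alpha}(v(x))$ belongs to $L^\infty(\Omega)$ with norm at most one, for every measurable $v$.

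\textbf{Step 2 (contradiction setup).} Set
\[
 r_h(x):=\prox_{\tilde g/\alpha}(v(x)+h(x))-\prox_{\tilde g/\alpha}(v(x))-\partial\prox_{\tilde g/\alpha}(v(x)+h(x))\,h(x).
\]
By Step 1 and nonexpansiveness of $\prox_{\tilde g/\alpha}$, one has the uniform pointwise bound $|r_h(x)|\le 2|h(x)|$. Suppose for contradiction that the first claim fails: then there are $\varepsilon>0$ and $h_n\to 0$ in $L^q(\Omega)$ with $\|r_{h_n}\|_{L^p}>\varepsilon\|h_n\|_{L^q}$. Normalize by considering $\tilde h_n:=h_n/\|h_n\|_{L^q}$ only to extract a subsequence (not relabeled) along which $h_n(x)\to 0$ for $\mu$-a.e.\ $x\in\Omega$ (for $q=\infty$ convergence is already uniform). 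At every such $x$, \cref{lem_prox_2nd_order} applied to the scalar $v(x)$ yields $|r_{h_n}(x)|/|h_n(x)|\to 0$ whenever $h_n(x)\neq 0$; on the zero set $r_{h_n}(x)=0$. Define
\[
 \phi_n(x):=\begin{cases}|r_{h_n}(x)|/|h_n(x)|, & h_n(x)\ne 0,\\ 0, & h_n(x)=0,\end{cases}
\]
so that $0\le \phi_n\le 2$ and $\phi_n\to 0$ $\mu$-a.e.

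\textbf{Step 3 (H\"older split and dominated convergence).} For the case $q<\infty$, apply H\"older's inequality with exponents $q/p>1$ and $q/(q-p)$ to
\[
 \|r_{h_n}\|_{L^p}^p=\int_\Omega \phi_n(x)^p\,|h_n(x)|^p\d\mu
 \le \Bigl(\int_\Omega \phi_n^{\,pq/(q-p)}\d\mu\Bigr)^{(q-p)/q}\|h_n\|_{L^q}^p.
\]
Since $\mu(\Omega)<\infty$ and $\phi_n$ is uniformly bounded and converges to zero a.e., dominated convergence gives $\int_\Omega \phi_n^{\,pq/(q-p)}\d\mu\to 0$, so $\|r_{h_n}\|_{L^p}=o(\|h_n\|_{L^q})$, contradicting the standing assumption. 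The case $q=\infty$ is even simpler: one directly bounds $\|r_{h_n}\|_{L^p}\le \|\phi_n\|_{L^p}\|h_n\|_{L^\infty}$ and invokes dominated convergence on $\phi_n^p$. This proves the first estimate.

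\textbf{Step 4 (directional-derivative estimate).} The second claim is obtained by exactly the same argument, replacing $r_h$ with the residual $s_h(x):=\prox_{\tilde g/\alpha}(v(x)+h(x))-\prox_{\tilde g/\alpha}(v(x))-\prox_{\tilde g/\alpha}'(v(x);h(x))$. Nonexpansiveness again furnishes the uniform bound $|s_h(x)|\le 2|h(x)|$, and the pointwise scalar $o(|h|)$-estimate of \cref{lem_prox_2nd_order} together with the same H\"older/dominated-convergence routine closes the argument. The main (and only) delicate point throughout is coupling the purely pointwise smallness with a norm-level smallness in different $L^p/L^q$ norms, which is exactly why the assumption $p<q$ (together with $\mu(\Omega)<\infty$) enters through H\"older's inequality.
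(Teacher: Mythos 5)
Your proposal is correct and follows essentially the same route as the paper: both lift the scalar pointwise $o(|h|)$ estimate of \cref{lem_prox_2nd_order} via the bounded difference quotient (your $\phi_n$ is the paper's $r(\cdot,h_n)$), extract an a.e.\ convergent subsequence, apply dominated convergence on the finite measure space, and close with the generalized H\"older inequality with exponent $s=pq/(q-p)$. The only difference is cosmetic — you phrase the final step as a contradiction where the paper uses the subsequence principle — and your passing remark about normalizing $h_n$ is unnecessary (and would destroy the a.e.\ convergence to zero you actually use), but it does not affect the argument.
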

\begin{proof}
Take $v\in L^q(\Omega)$.
Let us define for $x\in \Omega$ and $h \in \mathbb \R\setminus\{0\}$
\[
 r(x,h) := \frac1h \left( \prox_{\tilde g/\alpha}(v(x)+h) - \prox_{\tilde g/\alpha}(v(x))  -  \partial\prox_{\tilde g/\alpha}(v(x)+h)h \right).
\]
In addition, we set $r(x,0)=0$ for $x\in\Omega$.
Then $x\mapsto r(x,h(x))$ is $\mu$-measurable for all measurable $h:\Omega \to \R$. In addition, $\lim_{h\to 0} r(x,h)=0$ for almost all $x$ by \cref{lem_prox_2nd_order}.
Since $\prox_{\tilde g/\alpha}$ is Lipschitz (\cref{prop_prox_lipschitz}) and consequently $|\partial \prox_{\tilde g/\alpha}|\le 1$,
we have for almost all $x$ and all $h$ the estimate $|r(x,h)| \le 2$.

Let $(h_n)$ be a sequence in $L^q(\Omega)$ such that $h_n \to 0$ in $L^q(\Omega)$. Then there
is a pointwise $\mu$-a.e.\ converging subsequence $(h_{n_k})$. % that has a dominating function $d \in L^q(\Omega)$, \cite[Theorem 4.9]{Brezis2011}.
It follows $r(x,h_{n_k}(x)) \to 0$ for almost all $x$. Since $r$ is uniformly bounded and $\mu(\Omega)<\infty$, we get $\|r(\cdot,h_{n_k})\|_{L^s(\Omega)} \to 0$
for all $s\in [1,\infty)$ by dominated convergence.
It follows
\begin{multline*}
 \left\|\prox_{g/\alpha}(v+h_{n_k}) - \prox_{g/\alpha}(v)  -  \partial \prox_{g/\alpha} (v+h_{n_k})\cdot h_{n_k} \right\|_{L^p(\Omega)} \\
 =  \|r(\cdot,h_{n_k})h_{n_k} \|_{L^p(\Omega)}
 \le \|r(\cdot,h_{n_k})\|_{L^s(\Omega)} \|h_{n_k} \|_{L^q(\Omega)}
\end{multline*}
from the generalized H\"older inequality
with $s<\infty$ such that $\frac1p = \frac1s + \frac1q$.
Consequently,
\[
\frac{ \left\|\prox_{g/\alpha}(v+h_{n_k}) - \prox_{g/\alpha}(v)  -  \partial \prox_{g/\alpha} (v+h_{n_k})\cdot h_{n_k} \right\|_{L^p(\Omega)} }{ \|h_{n_k} \|_{L^q(\Omega)} } \to 0.
\]
Since the limit is independent of the chosen subsequence, the convergence along the whole sequence $(h_k)$ follows.

The second claim can be proven with similar arguments, observing that $|\prox_{\tilde g/\alpha}'(v;h)| \le |h|$.
\end{proof}

Note that we cannot get rid of the norm gap in the previous result.
The necessity of this gap has been discussed in \cite[Example 3.57]{Ulbrich2011}, \cite[Example 8.14]{ItoKunisch2008} for semismoothness
and in \cite[Remark 4.4]{GriesseGrundWachsmuth2008}, \cite{Malanowski2003}
for Bouligand differentiability.

\begin{example}
Without the assumption $\mu(\Omega)<\infty$, the result of \cref{lem_prox_2nd_order_LpLq} is not true.
Let
\[
\tilde g(v)  = \begin{cases} + \infty & \text{ if } v<0, \\
             0 & \text{ if } v\ge0. \\
            \end{cases}
\]
Then $\prox_{\tilde g}(v) = \max(v,0)$.
Let $\Omega=(0,+\infty)$, and let $\mu$ be the Lebesgue measure. Take $v(x):= -\exp(-x)$, so that $v\in L^p(\R)$ for all $p\in [1,+\infty]$.
Define $h_n := \chi_{[n,+\infty)} (-2v)$. Then $\|h_n\|_{L^p(\Omega)} = \exp(-n) \|v\|_{L^p(\Omega)}$, so that it follows
$\lim_{n\to\infty}\|h_n\|_{L^p(\Omega)}\to0$ for all $p\in [1,+\infty]$.
In addition,
\[
\prox_g(v+h_n) - \prox_g(v+h_n) - \prox_g'(v;h+h_n) =  \chi_{[n,+\infty)}(v+h_n) -0-0= \frac12 h_n.
\]
It is easy to check that $ \|h_n\|_{L^p(\Omega)} / \|h_n\|_{L^q(\Omega)}$ does not converge to zero for any choice of $p,q$.
\end{example}

Due to the norm gap appearing in \cref{lem_prox_2nd_order_LpLq}, we cannot expect that $\prox_{g/\alpha}$ is semismooth as a mapping on $L^2(\Omega)$.
In order that \cref{ass_semismooth} is satisfied, we will need to rely on smoothing properties of $S$ or $S^*$.

\begin{lemma}\label{lem_DDPsi}
Assume $\mu(\Omega)< \infty$. Set $U:= L^2(\Omega)$.
Let $\prox_{\tilde g/\alpha}$ satisfy \cref{ass_prox_piecewise}.
Let $Y$ be a Hilbert space, $S \in \L( L^2(\Omega),Y)$ be given such that the Hilbert space adjoint $S^*$ belongs to $\L(Y,L^q(\Omega))$ for some $q>2$.

Then the map $\Psi: Y \to Y$ defined by \eqref{def_Psi}
is second-order semismooth with respect to $\partial^2 \Psi:Y \rightrightarrows \L(Y,Y)$ defined by
\begin{equation}\label{def_DDPsi}
 \partial^2 \Psi(\xi):=\left\{ \frac1\alpha S \partial\prox_{g/\alpha}(S^*\xi/\alpha) S^* \right\},
\end{equation}
where $\partial\prox_{g/\alpha}$ is defined in \cref{lem_prox_2nd_order_LpLq}.
Moreover, \cref{ass_semismooth} is satisfied.
\end{lemma}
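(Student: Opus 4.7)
The plan is to verify the hypotheses of \cref{lem_2nd_order_semi} applied to $T=\Psi$ and $\partial^2 T = \partial^2\Psi$ as defined in \eqref{def_DDPsi}, and then separately check the structural conditions in \cref{ass_semismooth}. Throughout, I write $v := S^*\xi/\alpha$ and $k := S^*h/\alpha$, so that $v,v+k \in L^q(\Omega)$, with
\[
\|k\|_{L^q(\Omega)} \le \tfrac{1}{\alpha} \|S^*\|_{\L(Y,L^q(\Omega))} \|h\|_Y.
\]

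First I would verify the first-order remainder condition \eqref{eq_def_semismooth_1}. Recalling $\nabla\Psi(\xi) = S\prox_{g/\alpha}(v)$, for any $M = \tfrac{1}{\alpha} S\,\partial\prox_{g/\alpha}(v+k)\,S^* \in \partial^2\Psi(\xi+h)$, we have
\[
\nabla\Psi(\xi+h)-\nabla\Psi(\xi)-Mh = S\bigl[\prox_{g/\alpha}(v+k)-\prox_{g/\alpha}(v)-\partial\prox_{g/\alpha}(v+k)\cdot k\bigr].
\]
Using $S \in \L(L^2(\Omega),Y)$ and \cref{lem_prox_2nd_order_LpLq} with $p=2 < q$,
\[
\|\nabla\Psi(\xi+h)-\nabla\Psi(\xi)-Mh\|_Y \le \|S\|_{\L(L^2,Y)} \cdot o(\|k\|_{L^q(\Omega)}) = o(\|h\|_Y).
\]
Next I would establish Bouligand differentiability of $\nabla\Psi$. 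The second assertion of \cref{lem_prox_2nd_order_LpLq} gives directional differentiability of $v\mapsto \prox_{g/\alpha}(v)$ from $L^q(\Omega)$ to $L^2(\Omega)$ together with the required $o$-remainder. Composing with the linear maps $\xi\mapsto S^*\xi/\alpha$ and $S$ yields directional differentiability of $\nabla\Psi:Y\to Y$ and the remainder estimate $\|\nabla\Psi(\xi+h)-\nabla\Psi(\xi)-\Psi''(\xi;h)\|_Y = o(\|h\|_Y)$. Thus \cref{lem_2nd_order_semi} gives second-order semismoothness of $\Psi$ with respect to $\partial^2\Psi$.

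Finally, I would verify the pointwise structural bounds in \cref{ass_semismooth}. Writing $D := \partial\prox_{g/\alpha}(v+k) \in L^\infty(\Omega)$, \cref{lem_prox_2nd_order_LpLq} (via the nonexpansiveness of $\prox_{\tilde g/\alpha}$) yields $0 \le D(x) \le 1$ a.e. Consequently multiplication by $D$ is a self-adjoint, positive semidefinite operator on $L^2(\Omega)$ of norm at most $1$. For $M = \tfrac{1}{\alpha}SDS^*$ and any $y_1,y_2\in Y$,
\[
\langle My_1,y_2\rangle_Y = \tfrac{1}{\alpha}\langle D S^*y_1, S^*y_2\rangle_{L^2} = \langle y_1,My_2\rangle_Y,
\]
so $M=M^*$, and $\langle My,y\rangle_Y = \tfrac{1}{\alpha}\langle D S^*y,S^*y\rangle_{L^2} \ge 0$. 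Boundedness follows from
\[
\|M\|_{\L(Y,Y)} \le \tfrac{1}{\alpha}\|S\|_{\L(L^2,Y)}\|D\|_{L^\infty(\Omega)}\|S^*\|_{\L(Y,L^2)} \le \tfrac{1}{\alpha}\|S\|_{\L(L^2,Y)}^2 =: c,
\]
which is independent of $\xi$.

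The main obstacle, and the reason the smoothing hypothesis $S^*\in\L(Y,L^q(\Omega))$ with $q>2$ enters, is the well-known norm gap for Nemyzki superposition operators: \cref{lem_prox_2nd_order_LpLq} cannot be applied with $p=q=2$ (compare the example immediately following that lemma), so one must measure the increment $k$ in a strictly stronger norm than the one controlling the remainder. All other ingredients are essentially algebraic manipulations combined with the bound $|\partial\prox_{\tilde g/\alpha}|\le 1$.
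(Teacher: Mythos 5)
Your proposal is correct and follows essentially the same route as the paper: both reduce the claim to \cref{lem_2nd_order_semi} by verifying \eqref{eq_def_semismooth_1} via \cref{lem_prox_2nd_order_LpLq} with $p=2$ and the smoothing hypothesis on $S^*$, and obtain the Bouligand property of $\nabla\Psi$ from the second estimate of that lemma. You are in fact slightly more complete than the paper, which leaves the self-adjointness, positivity, and uniform boundedness of the operators in $\partial^2\Psi$ (needed for the ``Moreover'' part) implicit, while you verify them from $0\le \partial\prox_{\tilde g/\alpha}\le 1$.
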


In \eqref{def_DDPsi}, the function $\partial\prox_{g/\alpha}$ is meant to act as multiplication operator on $S^*$, i.e.,
\[
\left(\frac1\alpha S \partial\prox_{g/\alpha}(S^*\xi/\alpha) S^*\right)(v) = \frac1\alpha S \left( \partial\prox_{g/\alpha} \cdot S^*v \right),
\]
where $\cdot$ denotes the pointwise multiplication of $\partial\prox_{g/\alpha} \in L^\infty(\Omega)$ and $S^*v \in L^q(\Omega)$.

\begin{proof}[Proof of \cref{lem_DDPsi}]
We show that $ \partial^2 \Psi(\xi)$ from \eqref{def_DDPsi} satisfies the assumptions of \cref{lem_2nd_order_semi}.
Let us recall $\nabla \Psi(\xi) = S\prox_{g/\alpha}(S^*\xi/\alpha)$. Let $v,h\in Y$ be given. Then \cref{lem_prox_2nd_order_LpLq} with $p=2$
and the assumption on $S^*$ imply
\begin{multline*}
 \left\| \prox_{g/\alpha}(S^*(v+h)) - \prox_{g/\alpha}(S^*v) - \partial\prox_{g/\alpha}(S^*(v+h)) \cdot S^*h \right\|_{L^2(\Omega)} \\
 = o( \|S^*h\|_{L^q(\Omega)} ) = o( \|h\|_Y).
\end{multline*}
Clearly this implies that $\partial^2 \Psi(\xi)$ satisfies \eqref{eq_def_semismooth_1}.
The Bouligand property of the directional derivative can be proven with analogous arguments.
\end{proof}

\subsection{Examples}
\label{sec_examples}

\paragraph{Box constraints.}

Let $R>0$ be given.
Let us define $\tilde g$ by
\[
 \tilde g_{\textrm{box}} (u) = \begin{cases}
                0 & \text{ if } |u|\le R, \\
                +\infty & \text{ if } |u|>R.
               \end{cases}
\]
Then $\int_\Omega \tilde g_{\textrm{box}}(u) \dx< +\infty$ if and only if $|u(x)| \le R$ $\mu$-almost everywhere, that is if $u$ satisfies the box constraints.
In this case, the proximal operator is equal to the projection onto the associated feasible set,
\[
 \prox_{\tilde g_{\textrm{box}}}(v) = \max(-R, \min(+R, v)).
\]
Clearly, this function is piecewise affine linear, and \cref{ass_prox_piecewise} is satisfied.
For this particular choice of $\tilde g$, we have $\tilde g_{\textrm{box}} / \alpha = \tilde g_{\textrm{box}}$.

\paragraph{$L^1$-norm.}

Let us now consider the case
\[
 \tilde g_{L^1}(u) = \beta |u|.
\]
Then $\int_\Omega \tilde g_{L^1}(u) \d\mu =\beta \|u\|_{L^1(\Omega)}$. The corresponding proximal operator is the so-called soft-thresholding operator given by
\[
 \prox_{\tilde g_{L^1}}(v ) = \max(0, v-\beta) + \min(0, v+\beta) =
    \begin{cases}
    v+\beta & \text{ if } v < -\beta,\\
    0       & \text{ if } |v| \le \beta,\\
    v-\beta & \text{ if } v > \beta.
    \end{cases}
\]
This is again a piecewise affine linear map, so that \cref{ass_prox_piecewise} is satisfied.
The proximal operator of $ \tilde g_{L^1}/\alpha$ is obtained by replacing $\beta$  with $\beta/\alpha$ in the formula for $\prox_{\tilde g_{L^1}}$.

\paragraph{Box constraints plus $L^1$-norm.}

If $\tilde g = \tilde g_{L^1}(u) + \tilde g_{\textrm{box}} (u)$, then it is not hard to check that
\[ %begin{multline*}
 \prox_{\tilde g}(v) =  \max(0, \min(v-\beta,R)) + \min(0, \max(v+\beta,-R))
%  \\
 =
    \begin{cases}
    -R & \text{ if } v < -\beta-R\\
    v+\beta & \text{ if }-\beta-R\le v < -\beta,\\
    0       & \text{ if } |v| \le \beta,\\
    v-\beta & \text{ if }\beta\le v < \beta+R,\\
    R & \text{ if } v > \beta+R,
    \end{cases}
\]%end{multline*}
and hence,  \cref{ass_prox_piecewise} is satisfied.
Again,
the proximal operator of $( \tilde g_{L^1}+ \tilde g_{\textrm{box}})/\alpha$ is obtained by replacing $\beta$  with $\beta/\alpha$ in the formula above.

\paragraph{$L^2$-ball constraint.}

Let us now consider $g$ to be the indicator function of a  ball in $L^2(\Omega)$ with radius $\gamma>0$:
\[
 g(u) = \begin{cases}
         0 & \text{ if } \|u\|_{L^2(\Omega)} \le \gamma\\
         +\infty & \text{ if } \|u\|_{L^2(\Omega)} > \gamma.
        \end{cases}
\]
Again, the corresponding proximal map is the projection on the ball with radius $\gamma$, $\prox_g(v) = v \max(1, \frac{\gamma}{\|v\|_{L^2(\Omega)} })$.
Let us choose $\partial \prox_g(v) \in \L( L^2(\Omega),L^2(\Omega))$ by
\[
 \partial \prox_g(v)h :=\begin{cases}
h & \text{ if } \|v\|_{L^2(\Omega)} \leq \gamma\\
\frac{\gamma h}{\|v\|_{L^2(\Omega)}} - \frac{\gamma v \langle v,h\rangle_{L^2(\Omega)} }{ \|v\|_{L^2(\Omega)}^3} & \text{ if } \|q\|_{L^2(\omega)} > \gamma.
 \end{cases}
\]
Then $T' = \prox_g$ and $T'' = \partial \prox_g$ satisfy condition \eqref{eq_def_semismooth_1} by \cite[Proposition 4.1]{KunischWachsmuth2013}.
In addition, $\prox_g$ is Bouligand differentiable due to \cite[Proposition 3.4]{KunischWachsmuth2013}.

\paragraph{Directional sparsity.}

Here, $U = L^2( \Omega; L^2(I))$ where $I = (0,T)$.
\[
 g(u) = \beta \int_\Omega \left( \int_I |u(x,t)|^2 \dt\right)^{1/2} \dx
\]
The proximal operator associated to $g$ is given by
\[
 \prox_{g/\alpha}(u)(x,t) = \max\left(0, \|u(x,\cdot)\|_{L^2(I)} - \frac\beta\alpha\right) \frac{ u(x,t) } { \|u(x,\cdot)\|_{L^2(I)} }.
\]
Here, we used the notation
\[
\|u(x,\cdot)\|_{L^2(I)} :=  \left( \int_I |u(x,t)|^2 \dt\right)^{1/2}.
\]
This operator is semismooth from $L^p( \Omega; L^2(I))$ to $L^2( \Omega; L^2(I))$ for all $p>2$,
\cite[Lemma 3.22]{PieperDiss}, \cite[Lemma 3.2]{HerzogStadlerWachsmuth2012}.
In addition, it is easy to check that $\prox_{g/\alpha}$ is Bouligand differentiable using the chain rule for the Bouligand derivative \cite[Corollary A.4]{Robinson1987}.

\paragraph{Further examples.}

Additional examples that satisfy these assumptions are multi-bang problems \cite{ClasonKunisch2014},
where $g$ is given as the convex hull of
\[
 u \mapsto \begin{cases}
            \beta u^2 & \text{ if } u \in \mathbb Z,\\
            +\infty & \text{ otherwise.}
           \end{cases}
\]
In addition, also vector-valued controls can be considered \cite{ClasonTamelingWirth2021}.
Since $u = \prox_{g/\alpha}(v)$  if and only if $0 \in \alpha(u-v) + \partial g(u)$,
implicit function theorems \cite{Kruse2018} can be used to prove the semismoothness of proximal maps.

% \clearpage

\section{Variational discretization}
\label{sec_vd}

For computations, one has to choose finite-dimensional approximations of the infinite-dimensional problem \eqref{eq001}.
The application of the semismooth Newton method to discretized problems is straight-forward.
In this section, we will focus on the variational discretization of \cite{Hinze2005}.
We will consider the following optimal control problem: Minimize
\[
\frac12 \|y - z\|_{L^2(\Omega)}^2 + \frac\alpha2 \|u\|_{L^2(\Omega)}^2 + \int_\Omega \tilde g(u(x))\dx
\]
over all $(y,u) \in H^1_0(\Omega) \times L^2(\Omega)$
subject to
\[
 \int_\Omega \nabla y \cdot \nabla v \dx = \int_\Omega u v \dx \quad \forall v \in H^1_0(\Omega),
\]
where $\Omega\subset \R^d$ is a bounded domain, $z\in L^2(\Omega)$ is given, and $\tilde g: \R \to \bar \R$ is as in \cref{sec_nemyzki}.
The state space is discretized using piecewise linear and continuous finite element functions:
let $V_h \subset H^1_0(\Omega)$ be the corresponding finite element subspace.
Given $u \in L^2(\Omega)$, let $S_hu :=y_h \in V_h$
denote the solution of
\begin{equation} \label{eq_state_discretized}
 \int_\Omega \nabla y_h \cdot \nabla v_h \dx = \int_\Omega u v_h \dx \quad \forall v_h \in V_h,
\end{equation}
which gives rise to a linear and bounded operator $S_h : L^2(\Omega) \to V_h$. Note that
the control space $U = L^2(\Omega)$ is not discretized.

The dual of the discretized problem is now posed in $V_h$: find $\xi_h \in V_h$
such that
\begin{equation} \label{eq_dual_discretized}
 \xi - z + S_h\prox_{g/\alpha}(S_h^*\xi/\alpha)  =0.
\end{equation}
The crucial observation of \cite{Hinze2005} is the following: in this equation $S_h$ is applied to elements of the type $\prox_{g/\alpha}( q_h )$,
where $q_h$ is a piecewise linear finite element function. That is, if $\prox_{g/\alpha}$ has a favorable structure, then $S_h \prox_{g/\alpha}( q_h )$ can be evaluated
numerically even if $\prox_{g/\alpha}( q_h )$ does not belong to a finite element subspace.
This is the case for the first four examples discussed in \cref{sec_examples}.
For instance, if $g$ models box constraints on the control, then $\prox_{g/\alpha}( q_h )$ is piecewise linear.
This piecewise linear function has kinks in the interior of
elements in general, nevertheless the right-hand side for $u=\prox_{g/\alpha}( q_h )$ in \eqref{eq_state_discretized} can be computed exactly \cite{Hinze2005,HinzePinnauUlbrichUlbrich2009}.

In addition, when applying the semismooth Newton method to the dual equation \eqref{eq_dual_discretized}, these kinks do not accumulate.
This is not the case when applying the semismooth Newton to the primal equation as in \cref{rem_ssn_primal}.
To overcome this difficulty, \cite{HinzeVierling_2012spp} suggested to apply the semismooth Newton method to the normal map equation as in \cref{rem_ssn_normaleq}.
The variational discretization is not limited to piecewise linear finite elements, in \cite{SevillaWachsmuth2010} the application to finite elements of polynomial degree 2 is discussed.

% \clearpage

\section{Numerical experiments}
\label{sec_numerics}
% 	case 3
% 		% needs globalization
% 		proxtype = 'L1+box';
% 		alpha = 1e-5;
% 		oc.beta = 1e-2;
% 		oc.yd = '10*x*sin(5*x)*cos(7*y)';
% 		oc.ua = -1000;
% 		oc.ub =  1000;
We will consider the following optimal control problem: Minimize
\begin{equation} \label{eq_exp_primal}
J(y,u):=\frac12 \|y - z\|_{L^2(\Omega)}^2 + \frac\alpha2 \|u\|_{L^2(\Omega)}^2 + \int_\Omega \tilde g(u(x))\dx
\end{equation}
over all $(y,u) \in H^1_0(\Omega) \times L^2(\Omega)$
subject to
\begin{equation} \label{eq_weak_formulation}
 \int_\Omega \nabla y \cdot \nabla v \dx = \int_\Omega u v \dx \quad \forall v \in H^1_0(\Omega),
\end{equation}
where $\Omega =(0,1)^2$, $z\in L^2(\Omega)$ is given, and $\tilde g: \R \to \bar \R$ is as in \cref{sec_nemyzki}.
Here, \eqref{eq_weak_formulation} is the weak formulation of $-\Delta y = u$ with homogeneous Dirichlet boundary conditions.

We set $U := L^2(\Omega)$ and $Y:= L^2(\Omega)$.
We define $Su := y$, where $y\in H^1_0(\Omega)$ is the uniquely determined solution of \eqref{eq_weak_formulation}.
Then $S\in \L(Y,U)$ is bounded, moreover, $S^*$ is linear and continuous from $L^2(\Omega) = Y$ to $L^\infty(\Omega)$.

For discretization we used standard piecewise linear and conforming finite elements on a triangulation of $\Omega$ to discretize states ($y$) and dual quantities ($\xi$).
Controls ($u$) were discretized with piecewise constant functions. To evaluate $\prox_{g/\alpha}(S^*\xi/\alpha)$, first the piecewise linear function $S^*\xi$ is projected
onto the subspace of piecewise constant functions, so that the proximal map is computed for this projection.
All implementations were done in Matlab.

\subsection{Example 1: box constraints and \texorpdfstring{$\scriptstyle L^1$}{L¹}-cost}

Here, we work with
\[
 \tilde g(u) = \beta |u| + \begin{cases}
                     +\infty & \text{ if } |u| > R, \\
                     0 & \text{ if } |u| \le R \\
                    \end{cases}
\]
with
\[
\beta = 10^{-2}, \ R = 1000.
\]
The parameter $\alpha$ is set to  $\alpha = 10^{-5}$ if not mentioned otherwise.
The (local) semismooth Newton method for this problem was analyzed in \cite{Stadler2009}.
As argued in \cref{sec_examples}, $g$ satisfies \cref{ass_prox_piecewise}. Due to the smoothing properties of $S^*$,
the assumptions of \cref{lem_DDPsi} are satisfied, so that \cref{ass_semismooth} holds for this example.
As desired state we used
\[
 z(x_1,x_2) = 10x_1\sin(5x_1)\cos(7x_2).
 \]
We always start the iterations with $\xi_0 = -z$, which corresponds to the initial guess $u_0=0$ for the control.
For this problem, the unglobalized semismooth Newton method does not converge, hence globalization is necessary.

In the implementation, we used the following values for the parameters of \cref{alg_ssn}:
\[
\sigma=0.1,\ \beta =0.5, \   \tau=1,\ \dtol= 10^{-12}.
\]
In our implementation we used the following inexactness criterion
\[
 \| M_k d_k + \nabla \Phi(\xi_k)\|_Y
 \le \min\left( 10^{-4} ,\ 0.1 \|\nabla \Phi(\xi_k)\|_Y,\ \|\nabla \Phi(\xi_k)\|_Y^2 \right),
\]
which is slightly stronger than the one which is used in the convergence analysis,
and is inspired by the choices taken in \cite{MilzarekSchaippUlbrich2024}.
In addition, we stopped the iteration if the threshold in the linesearch is smaller than the machine epsilon, i.e.,
\begin{equation}\label{eq_dual_accuracy}
 \left|  \langle d_k, \nabla \Phi(\xi_k) \rangle \right| \le \texttt{eps(dual)},
\end{equation}
where the Matlab expression {\ttfamily eps(dual)}
returns the positive distance from {\ttfamily dual} to the next larger floating-point number, and {\ttfamily dual} is the current value of the dual objective.
This is motivated by the following observation: if the step size $t_k$ leads to a decrease of $\Phi$ we get the
chain of inequalities
\[
\Phi(\xi_k) + t_k \langle d_k, \nabla \Phi(\xi_k) \rangle \le
  \Phi(\xi_k + t_k d_k)
  %\le  \Phi(\xi_k) +  \sigma t_k  \langle d_k, \nabla \Phi(\xi_k) \rangle
  \le \Phi(\xi_k),
\]
where the first inequality is due to convexity. Hence, $\Phi(\xi_k + t_k d_k) $ lies in an interval of length $t_k \left|  \langle d_k, \nabla \Phi(\xi_k) \rangle \right|$.
If the condition \eqref{eq_dual_accuracy} is satisfied, then
the only floating point number in this interval is $\Phi(\xi_k)$, hence it is impossible to decrease the dual functional value any further.
In addition, \cref{cor_66} implies that $\|\nabla \Phi(\xi_k)\|_Y$ is small in this situation.

The optimal control $u = \prox_{g/\alpha}(S^*\xi/\alpha)$ computed on a mesh consisting of $20000$ triangles and with mesh-size $h = 0.014$ is depicted in \cref{fig1}.
The right-hand plot in \cref{fig1} shows $\partial\prox_{g/\alpha}(S^*\xi_k/\alpha)$, where $\xi_k$ is the final iterate of the dual variable.
In this plot, dark regions correspond to $\partial\prox_{g/\alpha}=0$, which means that the control is either zero or at the bounds $\pm R$,
while light regions  correspond to $\partial\prox_{g/\alpha}=0$, which are regions, where the control is between zero and the bounds.

\begin{figure}[htb]
\begin{center}
 \includegraphics[width=0.45\textwidth]{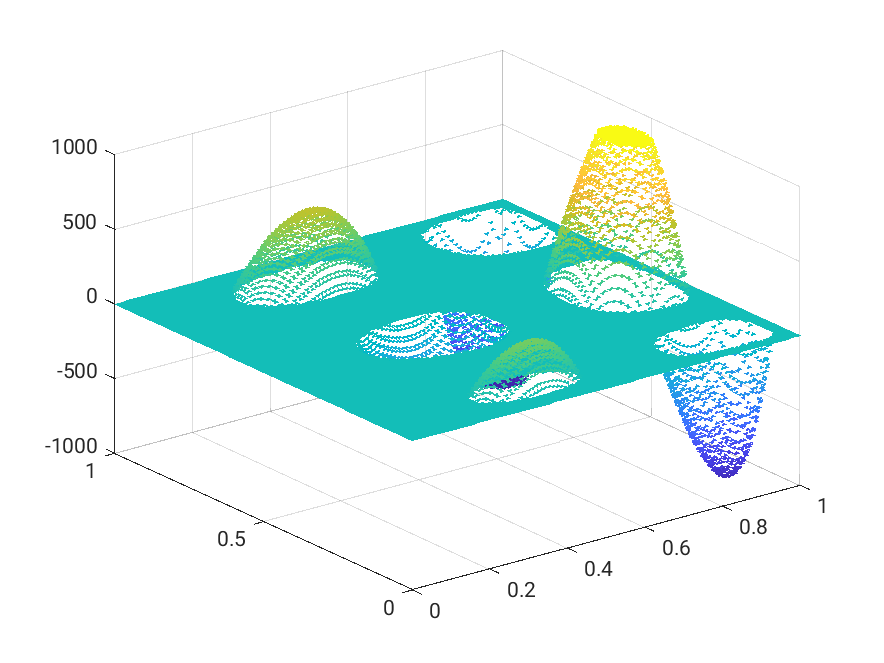}
 \includegraphics[width=0.45\textwidth]{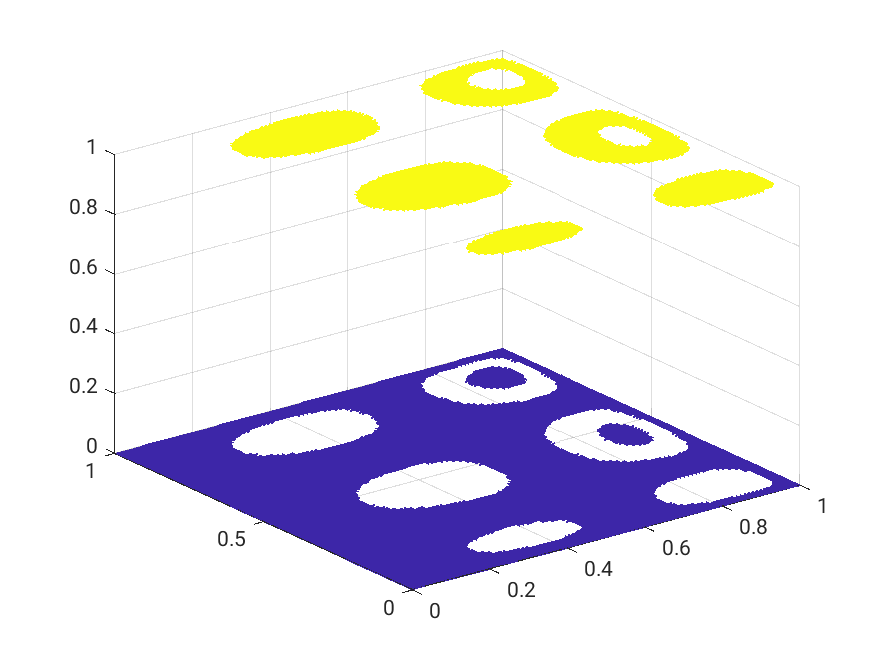}
\end{center}
\caption{Example 1: optimal control (left), $\partial\prox_{g/\alpha}$ (right)}
\label{fig1}
\end{figure}

The results for computations on different meshes can be found in \cref{table1}.
The mesh size is denoted by $h$. The finest mesh consists of $2\cdot 10^6$ triangles.
On all meshes the globalized semismooth Newton method needed 8 iterations.
The final value of the dual objective $\Phi$ together with the value of the final residual and primal-dual gap can be found in  \cref{table1} as well.
In all cases the iteration stopped because the condition \eqref{eq_dual_accuracy} was satisfied.
Since the primal-dual gap is small compared to $\Phi$, the optimal primal objective value equals $-\Phi$.
In addition, the number of iterations of the globalized semismooth Newton method (it) as well
as the total number of iterations of the inner conjugate gradient method (cg) almost stay constant for the different discretizations.

\begin{table}[htb]
\sisetup{table-alignment-mode = format, table-format = 2.2e2, table-number-alignment = left}
\begin{center}
\begin{tabular}{S[table-format = 1.2e2] S[table-format=2] S[table-format=2] S[table-format=-1.2,table-auto-round]S[table-format = 1.2e-2]S[table-format = 1.2e-2]}
\toprule
$h$ & {it}& {cg} &  $\Phi$ & {gap} & {residual} \\
\midrule
% eta = 0.1
%  4.42e-02 & 8 & 64 &  -3.06e+00 & 1.33e-15 & 1.47e-08 \\
%  2.21e-02 & 8 & 65 &  -3.41e+00 & 4.44e-16 & 6.66e-09 \\
%  1.13e-02 & 8 & 66 &  -3.60e+00 & 1.24e-14 & 3.35e-09 \\
%  5.66e-03 & 8 & 65 &  -3.70e+00 & 8.88e-16 & 6.84e-08 \\
%  2.83e-03 & 8 & 66 &  -3.76e+00 & 4.44e-16 & 1.22e-08 \\
%  1.41e-03 & 8 & 66 &  -3.78e+00 & 0.00e+00 & 1.27e-08 \\
% example = 3, vd = 0
% h, it, cgit, dual, gap, res
% eta = 1e-4
 4.42e-02 & 8 & 98 &  -3.06e+00 & 4.44e-16 & 1.47e-10 \\
 2.21e-02 & 8 & 98 &  -3.41e+00 & 2.66e-15 & 4.53e-09 \\
 1.13e-02 & 8 & 99 &  -3.60e+00 & 5.33e-15 & 1.28e-09 \\
 5.66e-03 & 8 & 99 &  -3.70e+00 & 4.44e-16 & 6.32e-10 \\
 2.83e-03 & 8 & 100 &  -3.76e+00 & 8.88e-16 & 1.83e-09 \\
 1.41e-03 & 8 & 99 &  -3.78e+00 & 4.44e-16 & 3.18e-09 \\
\bottomrule
\end{tabular}
\caption{Example 1, results for different meshes}
\label{table1}
\end{center}
\end{table}
%
% \begin{figure}[htb]
% \begin{center}
%  \includegraphics[width=0.75\textwidth]{ssng_3_vd0_N32_N1000-iteration-history.png}
% \end{center}
% \caption{Example 1: iteration history for different meshes}
% \label{fig1}
% \end{figure}
%

We also performed experiments with varying parameter $\alpha$ on a fixed discretization with $h= 2.83 \cdot 10^{-3}$.
The results can be found in \cref{table2}. As one expects, the iteration number increases with decreasing $\alpha$.
The third column of \cref{table2} reports the value $\|\partial\prox_{g/\alpha}(S^*\xi/\alpha)\|_{L^1(\Omega)}$, which corresponds to the
size of the inactive set, i.e., the size of the set, where $\prox_{g/\alpha}(S^*\xi/\alpha)$ takes values not in $\{-R,0,+R\}$.

\begin{table}[htb]
\sisetup{table-alignment-mode = format, table-format = 2.2e2, table-number-alignment = left}
\begin{center}
\begin{tabular}{S[table-format = 1.2e-2]S[table-format=2] S[table-format=3] S[table-format = 1.2e-2]S[table-format=-1.2]S[table-format = 1.2e-2]S[table-format = 1.2e-2]}
\toprule
$\alpha$ & {it} & {cg} & $\|\partial\prox_{g/\alpha}\|_{L^1}$ & $\Phi$ & {gap} & {residual} \\
\midrule
% % N = 500 h = 2.828427e-03, example = 3, vd = 0
% % alpha, it, dproxl1, dual, gap, res
%  1.00e-04 & 5 &  4.57e-01 & -4.70 & 2.66e-15 & 1.57e-10 \\
%  1.00e-05 & 8 &  2.70e-01 & -3.76 & 8.88e-16 & 1.84e-09 \\
%  1.00e-06 & 12 &  8.75e-02 & -3.35 & 8.88e-16 & 9.75e-08 \\
%  1.00e-07 & 19 &  2.23e-02 & -3.29 & 5.33e-14 & 3.58e-09 \\
%  1.00e-08 & 25 &  4.15e-03 & -3.28 & 1.71e-13 & 9.07e-09 \\
%  1.00e-09 & 29 &  6.06e-04 & -3.28 & 2.12e-12 & 1.71e-08 \\
% %  1.00e-10 & 72 &  1.06e-04 & -3.28 & 2.09e-12 & 4.24e-09 \\
% % N = 500 h = 2.828427e-03, example = 3, vd = 0
% % alpha, it, cgit, dproxl1, dual, gap, res
% % eta= 0.1
%  1.00e-04 & 5 & 31 &  4.57e-01 & -4.70 & 1.78e-15 & 4.40e-10 \\
%  1.00e-05 & 8 & 67 &  2.70e-01 & -3.76 & 4.44e-16 & 1.23e-08 \\
%  1.00e-06 & 12 & 120 &  8.75e-02 & -3.35 & 1.78e-15 & 1.78e-10 \\
%  1.00e-07 & 19 & 161 &  2.23e-02 & -3.29 & 8.17e-14 & 3.57e-09 \\
%  1.00e-08 & 20 & 159 &  4.15e-03 & -3.28 & 1.16e-13 & 8.97e-07 \\
%  1.00e-09 & 29 & 335 &  6.06e-04 & -3.28 & 2.12e-12 & 1.22e-09 \\
%  1.00e-10 & 81 & 719 &  1.06e-04 & -3.28 & 2.88e-09 & 7.68e-05 \\
% N = 500 h = 2.828427e-03, example = 3, vd = 0
% alpha, it, cgit, dproxl1, dual, gap, res
% eta = 1e-4
 1.00e-04 & 5 & 41 &  4.57e-01 & -4.70e+00 & 2.66e-15 & 1.57e-10 \\
 1.00e-05 & 8 & 100 &  2.70e-01 & -3.76e+00 & 8.88e-16 & 1.84e-09 \\
 1.00e-06 & 11 & 153 &  8.75e-02 & -3.35e+00 & 7.11e-15 & 1.30e-07 \\
 1.00e-07 & 19 & 290 &  2.23e-02 & -3.29e+00 & 5.33e-14 & 3.58e-09 \\
 1.00e-08 & 25 & 377 &  4.15e-03 & -3.28e+00 & 1.71e-13 & 9.07e-09 \\
 1.00e-09 & 29 & 427 &  6.06e-04 & -3.28e+00 & 2.12e-12 & 1.71e-08 \\
 1.00e-10 & 72 & 723 &  1.06e-04 & -3.28e+00 & 2.09e-12 & 4.24e-09 \\
\bottomrule
\end{tabular}
\caption{Example 1, results for different $\alpha$}
\label{table2}
\end{center}
\end{table}

In addition, we conducted experiments using the variational discretization of \cite{Hinze2005} as described in \cref{sec_vd}.
We studied again varying $\alpha$ on a fixed mesh with $h= 2.83 \cdot 10^{-3}$.
The results can be found in \cref{table3}.
While the results are comparable to those in \cref{table2} it is surprising that the iteration numbers grow only moderately with decreasing $\alpha$.
So it seems that the variational discretization should be favored to the piecewise constant discretization at least for small values of $\alpha$.
This might be due to the fact that the variational discretization can better resolve the boundary of active sets.

\begin{table}[htb]
\sisetup{table-alignment-mode = format, table-format = 2.2e2, table-number-alignment = left}
\begin{center}
\begin{tabular}{S[table-format = 1.2e-2]S[table-format=2]S[table-format=3]S[table-format = 1.2e-2]S[table-format=-1.2]S[table-format = 1.2e-2]}
\toprule
$\alpha$ & {it} & {cg} & $\|\partial\prox_{g/\alpha}\|_{L^1}$ & $\Phi$ &  {residual} \\
\midrule
% % N = 500 h = 2.828427e-03, example = 3, vd = 1
% % alpha, it, dproxl1, dual, gap, res
%  1.00e-04 & 5 &  4.57e-01 & -4.70 & 1.61e-10 \\
%  1.00e-05 & 8 &  2.70e-01 & -3.76 & 3.74e-09 \\
%  1.00e-06 & 11 &  8.74e-02 & -3.35 & 1.40e-07 \\
%  1.00e-07 & 15 &  2.22e-02 & -3.29 & 1.27e-09 \\
%  1.00e-08 & 20 &  4.23e-03 & -3.28 & 2.11e-09 \\
%  1.00e-09 & 21 &  6.02e-04 & -3.28 & 1.82e-10 \\
% %  1.00e-10 & 21 &  6.01e-05 & -3.28 & 1.46e-06 \\
% % N = 500 h = 2.828427e-03, example = 3, vd = 1
% % alpha, it, cgit, dproxl1, dual, gap, res
% % eta = 0.1
%  1.00e-04 & 5 & 33 &  4.57e-01 & -4.70 & 4.44e-10 \\
%  1.00e-05 & 8 & 65 &  2.70e-01 & -3.76 & 1.16e-08 \\
%  1.00e-06 & 13 & 112 &  8.74e-02 & -3.35 & 1.62e-08 \\
%  1.00e-07 & 15 & 147 &  2.22e-02 & -3.29 & 1.78e-09 \\
%  1.00e-08 & 15 & 146 &  4.23e-03 & -3.28 & 4.44e-08 \\
%  1.00e-09 & 20 & 204 &  6.02e-04 & -3.28 & 1.28e-06 \\
%  1.00e-10 & 18 & 175 &  6.01e-05 & -3.28 & 1.26e-06 \\
% N = 500 h = 2.828427e-03, example = 3, vd = 1
% alpha, it, cgit, dproxl1, dual, gap, res
% eta = 1e-4
 1.00e-04 & 5 & 41 &  4.57e-01 & -4.70e+00 & 1.61e-10 \\
 1.00e-05 & 8 & 99 &  2.70e-01 & -3.76e+00 & 3.74e-09 \\
 1.00e-06 & 11 & 153 &  8.74e-02 & -3.35e+00 & 1.40e-07 \\
 1.00e-07 & 15 & 239 &  2.22e-02 & -3.29e+00 & 1.27e-09 \\
 1.00e-08 & 20 & 302 &  4.23e-03 & -3.28e+00 & 2.11e-09 \\
 1.00e-09 & 21 & 328 &  6.02e-04 & -3.28e+00 & 1.82e-10 \\
 1.00e-10 & 20 & 294 &  6.01e-05 & -3.28e+00 & 1.47e-06 \\
\bottomrule
\end{tabular}
\caption{Example 1, results for different $\alpha$, variational discretization}
\label{table3}
\end{center}
\end{table}

\subsection{Example 2: singular control problem}

Here, we work with box constraints, i.e.,
\[
 \tilde g(u) =   \begin{cases}
                     +\infty & \text{ if } |u| > R, \\
                     0 & \text{ if } |u| \le R \\
                    \end{cases}
\]
with $R=1$.
The desired state was set to $z := Sf$ with
\[
 f(x_1,x_2) = \chi_{ [0,0.2] }(x_1) \cdot 5 \cdot \sin(\pi x_2).
\]
Hence, the control is used to counter the unwanted perturbation $f$ in the equation.
This example is adapted from an example in \cite[Section 4.4]{Rotin}.
It is challenging to solve for small values of $\alpha$, as for $\alpha \searrow0$ the corresponding solutions are not of bang-bang type.

\begin{figure}[htb]
\begin{center}
\includegraphics[width=0.45\textwidth]{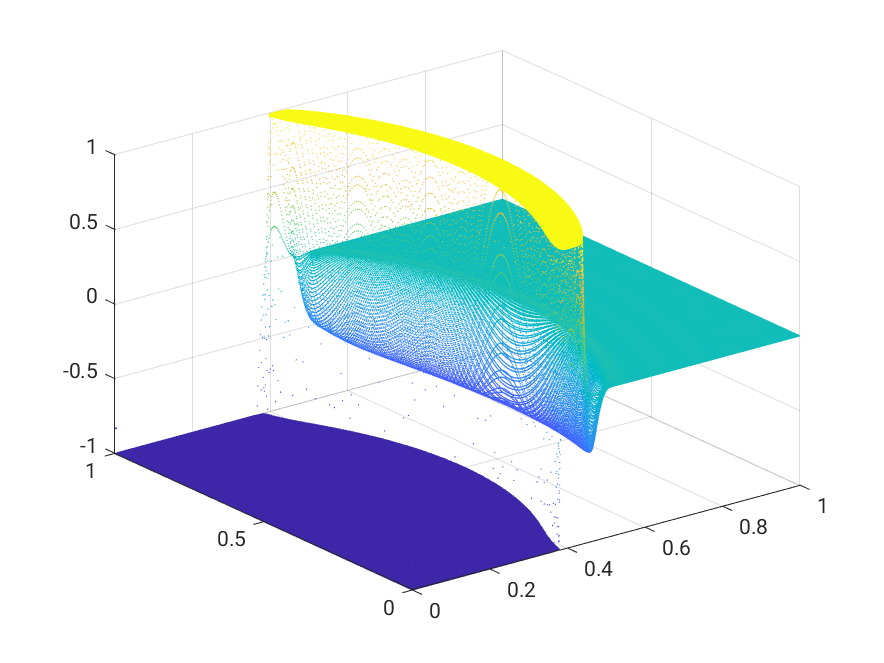}
\includegraphics[width=0.45\textwidth]{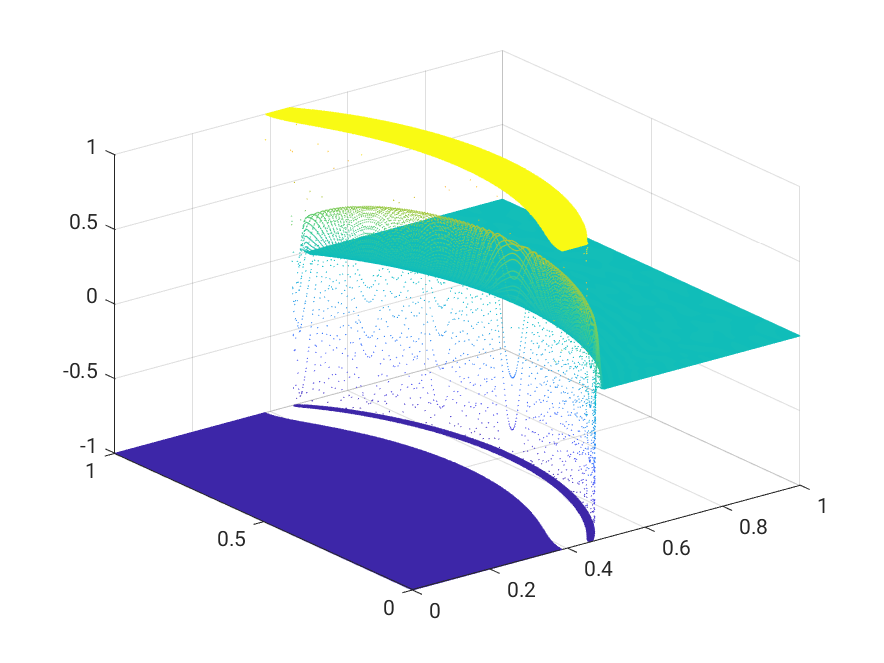}
\end{center}
\caption{Example 2: optimal controls for $\alpha =10^{-8}$  (left), $\alpha =10^{-10}$ (right)}
\label{fig2}
\end{figure}

We used the same discretization and setup as in the previous experiment.
The solution
for two different values of $\alpha \in \{ 10^{-8}, 10^{-10}\}$ and for a discretization with $h=2.83 \cdot 10^{-3}$
can be found in \cref{fig2}.
The control is zero on a large part of the domain. On the interface of the regions between $u=0$ and $u\ne0$
the control exhibits scattering behavior: it oscillates between upper and lower bounds, where
the oscillations increase with decreasing mesh size and parameter $\alpha$.
The results of computations for different values of $\alpha$
are reported in \cref{table4}.
As one can see, the iteration number grows much stronger with decreasing $\alpha$ than in the previous experiment.
Most of these iterations are spent in damping oscillations of the control variable in the region, where the control eventually is zero.
In addition, the inner CG iteration also took considerably more steps than for the previous experiment.

\begin{table}[htb]
\sisetup{table-alignment-mode = format, table-format = 2.2e2, table-number-alignment = left}
\begin{center}
\begin{tabular}{S[table-format = 1.2e-2]S[table-format=2]S[table-format=5] S[table-format = 1.2e-2]S[table-format=-1.2e-2]S[table-format = 1.2e-2]} %S[table-format = 1.2e-2]}
\toprule
% $\alpha$ & {it} & {cg} & $\|\partial\prox_{g/\alpha}\|_{L^1}$ & $\Phi$ & {gap} & {residual} \\
$\alpha$ & {it} & {cg} & $\|\partial\prox_{g/\alpha}\|_{L^1}$ & $\Phi$  & {residual} \\
\midrule
 1.00e-04 & 9 & 45 &  7.75e-01 & -4.61e-05 & 6.99e-18 \\
 1.00e-05 & 17 & 122 &  6.16e-01 & -3.10e-05 & 1.97e-17 \\
 1.00e-06 & 16 & 265 &  5.48e-01 & -2.90e-05 & 1.48e-11 \\
 1.00e-07 & 24 & 650 &  4.65e-01 & -2.87e-05 & 2.90e-14 \\
 1.00e-08 & 61 & 4506 &  4.43e-01 & -2.87e-05 & 7.48e-11 \\
 1.00e-09 & 52 & 8768 &  4.22e-01 & -2.87e-05 & 4.79e-14 \\
 1.00e-10 & 101 & 59713 &  4.13e-01 & -2.87e-05 & 2.63e-06 \\
\bottomrule
\end{tabular}
\caption{Example 2, results for different $\alpha$}
\label{table4}
\end{center}
\end{table}

That lead us to consider a nested iteration approach: here the iteration for some value $\alpha = 10^{-(k+1)}$ is started with the result for
the value $\alpha = 10^{-k}$. The results can be found in \cref{table5}.
There the column 'iterations' denotes the number of iterations when started with the solution of the previous iterations. For this
warm-started semismooth Newton the number of iterations per value of $\alpha$ stays constant.
In addition, the cumulated iteration numbers are still less than those for the experiment in \cref{table4},
and the inner CG iteration numbers do not grow as severely as in that setting.

\begin{table}[htb]
\sisetup{table-alignment-mode = format, table-format = 2.2e2, table-number-alignment = left}
\begin{center}
\begin{tabular}{S[table-format = 1.2e-2]S[table-format=2]S[table-format=5] S[table-format = 1.2e-2]S[table-format=-1.2e-2]S[table-format = 1.2e-2]}%S[table-format = 1.2e-2]}
\toprule
% $\alpha$ & {it} & {cg} & $\|\partial\prox_{g/\alpha}\|_{L^1}$ & $\Phi$ & {gap} & {residual} \\
$\alpha$ & {it} & {cg} & $\|\partial\prox_{g/\alpha}\|_{L^1}$ & $\Phi$ & {residual} \\
\midrule
% % nested iteration
% % N = 500 h = 2.828427e-03, example = 101, vd = 0
% % alpha, it, dproxl1, dual, gap, res
%  1.00e-04 & 9 &  7.75e-01 & -4.61e-05 & 9.49e-20 & 6.99e-18 \\
%  1.00e-05 & 7 &  6.16e-01 & -3.10e-05 & 1.02e-19 & 7.64e-18 \\
%  1.00e-06 & 6 &  5.48e-01 & -2.90e-05 & 2.95e-18 & 7.67e-12 \\
%  1.00e-07 & 7 &  4.65e-01 & -2.87e-05 & 4.29e-17 & 4.02e-15 \\
%  1.00e-08 & 7 &  4.43e-01 & -2.87e-05 & 1.50e-16 & 3.86e-09 \\
%  1.00e-09 & 7 &  4.22e-01 & -2.87e-05 & 2.15e-15 & 9.13e-14 \\
%  1.00e-10 & 8 &  4.13e-01 & -2.87e-05 & 2.49e-11 & 7.06e-06 \\
% nested iteration
% % N = 500 h = 2.828427e-03, example = 101, vd = 0
% % alpha, it, cgit, dproxl1, dual, gap, res
% % eta 1e1-4
%  1.00e-04 & 9& 45 &  7.75e-01 & -4.61e-05 & 9.49e-20 & 6.99e-18 \\
%  1.00e-05 & 7& 118 &  6.16e-01 & -3.10e-05 & 1.02e-19 & 7.64e-18 \\
%  1.00e-06 & 6& 294 &  5.48e-01 & -2.90e-05 & 2.95e-18 & 7.67e-12 \\
%  1.00e-07 & 7& 631 &  4.65e-01 & -2.87e-05 & 4.29e-17 & 4.02e-15 \\
%  1.00e-08 & 7& 1738 &  4.43e-01 & -2.87e-05 & 1.50e-16 & 3.86e-09 \\
%  1.00e-09 & 7& 3857 &  4.22e-01 & -2.87e-05 & 2.15e-15 & 9.13e-14 \\
%  1.00e-10 & 8& 11190 &  4.13e-01 & -2.87e-05 & 2.49e-11 & 7.06e-06 \\
% N = 500 h = 2.828427e-03, example = 101, vd = 0
% alpha, it, cgit, dproxl1, dual, res
% eta 1e1-4
 1.00e-04 & 9& 45 &  7.75e-01 & -4.61e-05 & 6.99e-18 \\
 1.00e-05 & 7& 118 &  6.16e-01 & -3.10e-05 & 7.64e-18 \\
 1.00e-06 & 6& 294 &  5.48e-01 & -2.90e-05 & 7.67e-12 \\
 1.00e-07 & 7& 631 &  4.65e-01 & -2.87e-05 & 4.02e-15 \\
 1.00e-08 & 7& 1738 &  4.43e-01 & -2.87e-05 & 3.86e-09 \\
 1.00e-09 & 7& 3857 &  4.22e-01 & -2.87e-05 & 9.13e-14 \\
 1.00e-10 & 8& 11190 &  4.13e-01 & -2.87e-05 & 7.06e-06 \\
\bottomrule
\end{tabular}
\caption{Example 2, results for different $\alpha$, nested iterations}
\label{table5}
\end{center}
\end{table}

For this example, the usage of the variational discretization did not give any advantage like in the previous experiment,
so we chose not to report the corresponding numbers.

\bibliographystyle{jnsao}
\bibliography{ssn}

\end{document}